\newcommand{\ub}{\underbracket[.5pt][2pt]}
\newcommand{\avlotimes}[1]{{}_{#1}{\!}\otimes}
\newcommand{\rotimes}[1]{\otimes{\!}_{#1}}
\newcommand{\im}{\operatorname{Im}}
\newcommand{\cls}{\operatorname{cls}}
\newcommand{\shift}{{\mathsf\Sigma}}
\newcommand{\susp}{{\varsigma}}
\newcommand{\nat}{{{}^{\natural}}}
\newcommand{\opp}{{}^{\mathsf{op}}}
\newcommand{\avop}{{}^{\mathsf{o}}}
\newcommand{\babar}[1]{{\mathsf{B}#1}}
\newcommand{\koszul}[1]{{{\operatorname K}{#1}}}
\newcommand{\priddy}[1]{{k\langle W^*\rangle}}
\newcommand{\cobar}[1]{{\mathsf{\Omega} #1}}
\newcommand{\DGM}[3]{{\mathsf{M}^{\mathsf{#1}}_{\mathsf{#2}}(#3)}}
\newcommand{\dcatdf}[3]{{\mathsf{D}^{\mathsf{#1}}_{\mathsf{#2}}(#3)}}
\newcommand{\ten}[2]{\mathsf T^{\mathsf#1}(#2)}
\newcommand{\tenn}[2]{{\mathsf T}^{#1}(#2)}
\newcommand{\ltensor}{\otimes^{\mathsf L}}
\newcommand{\rhom}{\operatorname{{\mathsf R}Hom}}
\newcommand{\HH}{\operatorname{H}}
\newcommand{\ZZ}{\operatorname{Z}}
\newcommand{\xra}{\xrightarrow}
\newcommand{\xla}{\xleftarrow}
\newcommand{\dd}{\partial}
\newcommand{\scup}{{\scriptstyle\smile}}
\newcommand{\ocup}{\dot{\scriptstyle\smile}}
\newcommand{\scap}{{\scriptstyle\frown}}
\newcommand{\ocap}{\dot{\scriptstyle\frown}}
\newcommand{\BZ}{{\mathbb Z}}  %
\newcommand{\ee}[3]{\operatorname{E}^{#1}_{#2,#3}}
\newcommand{\de}[3]{\operatorname{d}^{#1}_{#2,#3}}
\newcommand{\eec}[3]{\operatorname{E}_{#1}^{#2,#3}}
\newcommand{\dec}[3]{\operatorname{d}_{#1}^{#2,#3}}
\newcommand{\ov}{\overline}
\newcommand{\col}{\colon}
\newcommand{\wt}{\widetilde}
\newcommand{\bsh}{{\boldsymbol h}}
\newcommand{\omicron}{{o}}
\newcommand{\ges}{\geqslant}
\newcommand{\les}{\leqslant}
\newcommand{\id}{\operatorname{id}}
\newcommand{\rank}{\operatorname{rank}}
\newcommand{\Ker}{\operatorname{Ker}}
\newcommand{\res}[2]{\mathsf{F}_{#1}{(#2)}}
\newcommand{\Hom}{\operatorname{Hom}}
\newcommand{\End}{\operatorname{End}}
\newcommand{\Ext}{\operatorname{Ext}}
\newcommand{\oshriek}{{\vphantom{\ov A}}^{\text{\rm<}}}
\newcommand{\Coker}{\operatorname{Coker}}
\theoremstyle{remark}
\theoremstyle{plain}
\newtheorem{theorem}{Theorem}[section]
\newtheorem{proposition}[theorem]{Proposition}
\newtheorem{lemma}[theorem]{Lemma}
\newtheorem{corollary}[theorem]{Corollary}
\newtheorem*{main}{Theorem}
\theoremstyle{remark}
\newtheorem*{Claim}{Claim}
\newtheorem{remark}[theorem]{Remark}
\newtheorem{notation}[theorem]{Notation}
\newtheorem*{Notes}{Notes}
\theoremstyle{definition}
\newtheorem{chunk}[theorem]{}
\newenvironment{bfchunk}{\begin{chunk}\textbf}{\end{chunk}}
\numberwithin{equation}{theorem}
\theoremstyle{remark}
\begin{document}
\title[Contravariant Koszul duality]
{(Contravariant) Koszul duality for DG algebras}
\author[L.~L.~Avramov]{Luchezar L.~ Avramov}
\address{Department of Mathematics,University of Nebraska,
Lincoln, NE 68588,U.~S.~A.}
\email{avramov@math.unl.edu}

   \begin{abstract}
A DG algebras $A$ over a field $k$ with $\mathrm{H}(A)$ connected and 
$\mathrm{H}_{<0}(A)=0$ has a unique up to isomorphism DG module $K$ 
with $\mathrm{H}(K)\cong k$.  It is proved that if $\HH(A)$ is degreewise finite, then 
$\mathrm{RHom}_A(?,K)\col\mathsf{D^{df}_{+}}(A)^{\mathsf{op}}\equiv\mathsf{D_{df}^{+}}(\mathrm{RHom}_A(K,K))$
is an exact equivalence of derived categories of DG modules with degreewise finite-dimensional 
homology. It induces an equivalences of $\mathsf{D^{df}_{b}}(A)^{\mathsf{op}}$ 
and the category of perfect DG $\mathrm{RHom}_A(K,K))$ modules, and vice-versa.
Corresponding statements are proved also when $\mathrm{H}(A)$ is simply 
connected and $\mathrm{H}^{<0}(A)=0$.
  \end{abstract}

\maketitle

\tableofcontents

\section{Introduction}

\numberwithin{equation}{theorem}

This paper is concerned with certain subcategories of the derived category $\dcatdf{}{}{A}$ of left DG 
(differential graded) $A$-modules, when $A$ is a DG algebra over a field~$k$.  Quotations from the
introductions of two foundational papers will help explain the approach taken here and the parentheses 
in the title.  

In \emph{Koszul duality} \cite[p.\,317]{BGSc} Beilinson, Ginzburg and Schechtman write

\smallskip\noindent
{\footnotesize [\dots] we consider differential graded algebras that satisfy some natural conditions (we call them mixed algebras).
For any mixed algebra $A$ one defines the new mixed algebra $\check A$ called Koszul dual of $A$ (the cohomology 
of $\check A$ equals to $\Ext$'s of $A$ with simple coefficients).  The derived categories $\dcatdf{}{}A$, 
$\dcatdf{}{}{\check A}$ (of differential graded modules) are canonically equivalent (``Koszul duality'').}
  \smallskip
  
This is fine-tuned by Beilinson, Ginzburg and Soergel in \cite[p.\,477]{BGSo}:

\smallskip\noindent
{\footnotesize For a Koszul ring $A$ one might interpret $\mathsf{E}(A)$ as being $\rhom_A(k,k)$, and then the Koszul
duality functor $K$ is just the functor $\rhom_A(k,\mathrm{?})$ [\dots] More details on this point of view 
can be found in~\cite{Ke}\footnote{The reference has been redirected to the bibliography in the present paper.}.}
 \smallskip
 
The present paper is about a \emph{duality} realized by a (\emph{contravariant}) exact functor:
  
  \begin{main}
Assume that the following hold: $\HH_0(A)=k$, $\rank_k\HH_i(A)$ is finite for each $i$, and
  \[
\HH_{i}(A)=0 \text{ for } i<0
\quad\text{respectively}\quad
\HH_{i}(A)=0 \text{ for } i>0
\text{ and }
\HH_{-1}(A)=0
  \]

The DG algebra $A$ then has a DG module ${}_Ak$ with $\sum_i\rank\HH_i({}_Ak)=1$, which is
unique up to shift and isomorphism in $\dcatdf{}{}{A}$.  It defines an exact equivalence
  \[
\xymatrixcolsep{1.5pc}
\xymatrixrowsep{2pc} 
\xymatrix{
\rhom_A(?,{}_Ak)\col \dcatdf{hf}{ha}{A}\opp
\ar@{->}[r]^-{\equiv}
&\dcatdf{hf}{ha}{\mathsf{E}(A)}
}
  \]
where $\mathsf{E}(A)={\rhom_A({}_Ak,{}_Ak)}$, and further restricts to exact equivalences
   \[
\xymatrixcolsep{1.5pc}
\xymatrixrowsep{2pc} 
\xymatrix{
\dcatdf{hf}{hb}{A}\opp
\ar@{->}[r]^-{\equiv}
&\dcatdf{}{perf}{\mathsf{E}(A)}
}
\quad\text{and}\quad 
\xymatrix{
\dcatdf{}{perf}{A}\opp
\ar@{->}[r]^-{\equiv}
&\dcatdf{hf}{hb}{\mathsf{E}(A)}
}
  \]
   \end{main}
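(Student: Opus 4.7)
The strategy is to construct an inverse of the form $\rhom_{\mathsf{E}(A)}(?,{}_Ak)$, verify the biduality morphism on a single generator, and then extend by triangulated dévissage.

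First I would fix a minimal semi-free resolution $F\xra{\simeq}{}_Ak$ whose underlying graded $A$-module is $A\otimes_k V$ with $\dim_k V_i$ finite and $V_i=0$ for $i<0$; such a resolution exists by the connectedness and degreewise-finiteness hypotheses. Using this model, $\mathsf{E}(A):=\rhom_A({}_Ak,{}_Ak)=\End_A(F)$ acquires a DG algebra structure with degreewise finite homology supported in degrees $\les 0$, and the right $\mathsf{E}(A)$-action on $F$ equips $\rhom_A(?,{}_Ak)$ with the structure of a contravariant exact functor into $\dcatdf{}{}{\mathsf{E}(A)}$. The key single-object statement to verify is that the evaluation morphism
\[
A\;\longrightarrow\;\rhom_{\mathsf{E}(A)}(\rhom_A(A,{}_Ak),\,{}_Ak)=\rhom_{\mathsf{E}(A)}({}_Ak,{}_Ak)
\]
is a quasi-isomorphism. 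Via a symmetric minimal resolution of ${}_Ak$ over $\mathsf{E}(A)$, this reduces to vector-space biduality $V\cong V^{**}$ in each homological degree---exactly where the degreewise finite-dimensionality of $\HH(A)$ is essential. A symmetric argument handles ${}_Ak$ over $\mathsf{E}(A)$.

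Triangulated dévissage then extends biduality from $A$ to the entire thick subcategory $\dcatdf{}{perf}{A}$, whose image is the thick closure of ${}_Ak$ in $\dcatdf{}{}{\mathsf{E}(A)}$. That closure is precisely $\dcatdf{hf}{hb}{\mathsf{E}(A)}$: it is visibly contained there, and an induction on the total rank of homology---always available because $\HH_0(\mathsf{E}(A))=k$ and the truncation-style filtrations on homologically bounded objects have simple successive quotients---shows every object of $\dcatdf{hf}{hb}{\mathsf{E}(A)}$ is built from shifts of ${}_Ak$ by finitely many extensions. This yields the equivalence $\dcatdf{}{perf}{A}\opp\equiv\dcatdf{hf}{hb}{\mathsf{E}(A)}$; swapping $A$ and $\mathsf{E}(A)$, after noting that the hypotheses transfer (i.e.\ $\mathsf{E}(\mathsf{E}(A))\simeq A$), gives the companion equivalence $\dcatdf{hf}{hb}{A}\opp\equiv \dcatdf{}{perf}{\mathsf{E}(A)}$.

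For the unbounded equivalence at the $\dcatdf{hf}{ha}{?}$-level, I would write any object of $\dcatdf{hf}{ha}{A}$ as a homotopy colimit of its brutal truncations, which all lie in $\dcatdf{hf}{hb}{A}$, and check that $\rhom_A(?,{}_Ak)$ sends this colimit to the corresponding homotopy limit in $\dcatdf{hf}{ha}{\mathsf{E}(A)}$. The main technical obstacle I anticipate is exactly this last convergence step: one must show that the limit retains degreewise finite homology and that no unintended homology appears in the unbounded direction. This is ultimately controlled by the minimality of the fixed resolution of ${}_Ak$ together with the uniform degreewise-finiteness hypothesis, which together ensure that the relevant $\lim^{1}$-terms vanish and that the spectral sequences governing the limit degenerate in each bidegree.
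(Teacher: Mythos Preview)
Your outline follows a genuinely different route from the paper's. The paper does not argue via biduality and d\'evissage; instead it builds an explicit quasi-inverse using twisted tensor products. After replacing the given DG algebra by a degreewise finite augmented model (a step you implicitly assume when you invoke a minimal resolution), the paper chooses an acyclic twisting map $\tau\col C\to A$ with $C$ degreewise finite (for instance $C=\babar A$), proves directly that the functors $C^*\rotimes{\tau^*}\mathrm{?}^*$ and $A\rotimes{\tau}\mathrm{?}^*$ are inverse equivalences between $\dcatdf{f}{a}{A}\opp$ and $\dcatdf{f}{a}{C^*}$ (Moore duality), and then identifies $C^*$ with $\mathsf{E}(A)$ via an explicit quasi-isomorphism $C^*\to\End_A(A\rotimes{\tau}C)$. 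The passage from $\dcatdf{f}{a}$ to $\dcatdf{hf}{ha}$ is handled by controlled semifree resolutions, not by homotopy (co)limits. The payoff is that both functors and both adjunction morphisms are given by concrete formulas, so nothing needs to be checked by a limiting argument.

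Your proposal has two under-justified steps. First, the biduality $A\xra{\simeq}\rhom_{\mathsf{E}(A)}({}_Ak,{}_Ak)$: you say this ``reduces to vector-space biduality $V\cong V^{**}$,'' but you have not explained what plays the role of a semifree resolution of ${}_Ak$ over $\mathsf{E}(A)$, nor why its graded basis is dual to that of $F$. This double-centralizer statement is precisely what the twisted-tensor-product machinery makes transparent (it is the acyclicity of $C^*\rotimes{\tau^*}A^*$), and proving it from scratch is comparable in difficulty to the theorem itself. Second, the homotopy-colimit extension to $\dcatdf{hf}{ha}$: you correctly flag this as the main obstacle, but the appeal to minimality and $\lim^1$-vanishing is not a proof. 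You need to know in advance that $\rhom_A(?,{}_Ak)$ lands in $\dcatdf{hf}{ha}{\mathsf{E}(A)}$ and that the inverse limit of the duals of the truncations computes the dual of the colimit; both already require controlled-resolution input of the sort the paper develops. Your treatment of the perfect/bounded restrictions via thick subcategories is sound and matches the paper's final step.
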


The full subcategories in the theorem are described as follows: $\dcatdf{hf}{ha}{A}$ consists of 
the DG modules $M$ with $\rank_k\HH_i(M)$ finite for each $i$ and $\HH_i(M)=0$ for $i\ll0$,
respectively, for $i\gg0$; the objects of $\dcatdf{hf}{hb}{A}$ have $\sum_i\rank_k\HH_i(M)$ finite; 
$\dcatdf{}{perf}A$ is the thick subcategory generated by~$A$.

We give a complete, largely self-contained proof of the theorem.  The existence and uniqueness 
of $k_A$ was first established by Dwyer, Greenlees, and Iyengar \cite{DGI}.

Although our result involves only derived categories of DG modules, it is closely related to a 
covariant equivalence of the category of DG modules over an algebra $A$ and that of DG 
comodules over a coalgebra $C$, linked to $A$ via an acyclic twisting map {$\tau\col C\to A$}.  
Envisioned by John Moore \cite{Mo} for applications to homotopy theory, \emph{Moore equivalence} 
was developed by Husemoller, Moore and Stasheff \cite{HMS}, and others, by utilizing 
E.~H.~Brown's \cite{Br} construction of twisted tensor products.  

The first half of the paper contains a succinct presentation of twisted tensor products in the special
case needed here and a record of the behavior of the basic constructions under vector space 
duality.  This approach circumvents a number of complications that arise when working 
with derived categories of DG comodules.  The proof of the theorem is given in sections 
\ref{MooreDuality} through~\ref{KoszulDuality}; the present version 
incorporates simplifications suggested by work of F\'elix, Halperin and Thomas \cite{FHT1} on 
the homology of fibrations.  In the last sections of the main text we discuss, with a view towards 
applications, two classes of algebras for which explicit computations are available.  Three 
appendices handle terminology and notation concerning DG (co)algebras and DG 
(co)modules, allowing for a largely self-contained exposition.  In particular, no prior exposure 
to coalgebras or twisted tensor products is assumed.

The motivation for this paper came from the joint work \cite{AJ}, where the theorem above is needed.  
A few related earlier results are discussed at the end of Section~\ref{KoszulDuality}.

I want to thank Alexander Berglund, Ragnar-Olaf Buchweitz, Srikanth Iyengar, and Sarah Witherspoon
for useful conversations at various stages of this work, the anonymous referee for a thorough reading 
of an earlier version, and the editors of the conference proceedings for their patience and tact.

\section{Cup products and cap products}
\label{S:Cup products and cap products}

\numberwithin{equation}{theorem}

This section contains constructions that underpin all the work in the paper. 

The classical operation of convolution of functions turns the complex of linear homomorphisms 
from a DG coalgebra to DG algebra into a DG algebra.  The latter comes equipped with natural 
actions on complexes of homomorphisms from DG comodules to DG modules and on tensor 
products of such objects.  We give a concise and complete account of these 
constructions.  Following the tradition in algebraic topology we use the names cup product and 
cap product for the resulting multiplicative structures and adopt the corresponding notation.

  \begin{notation}
    \label{ch:notation1}
In this paper $k$ denotes a fixed field.  All complexes are defined over~$k$.  References to $k$ are 
often suppressed from terminology and notation.  In particular, $\otimes$ refers to tensor 
products over $k$ and $\Hom$ to spaces of $k$-linear homomorphisms. 

Throughout the paper the following notation is in force:
  \medskip

\centerline{
  \begin{tabular}{lll}
$A$ is a DG algebra
&\qquad\qquad
& $C$ is a DG coalgebra
\\
$M$ is a left DG $A$-module
&& $X$ is a left DG $C$-comodule
\\
$N$ is a right DG $A$-module
&& $Y$ is a right DG $C$-comodule
  \end{tabular}
}
\medskip

\noindent
The relevant definitions are recalled in Appendices \ref{app:DGA} and \ref{app:DGC}.
  \end{notation}

  \begin{bfchunk}{Cup products.}
    \label{ch:cup}
Set $\Xi_{CA}=\Hom(C,A)$.

The \emph{cup product} of $\xi\in\Xi_{CA}$ and $\zeta\in\Hom(X,M)$ is the composed map  
  \[
\xi\scup\zeta\col X\xra{\psi^{CX}} C\otimes X\xra{\xi\otimes\zeta} 
A\otimes M\xra{\varphi^{AM}} M
  \]
Thus, if $\psi^{CX}(y)=\sum_i c_i\otimes x_i$, then we have the expression
  \begin{equation}
   \label{eq:cup}
(\xi\scup\zeta)(x)=\sum_i (-1)^{|\zeta||c_i|}\xi(c_i)\zeta(x_i)
  \end{equation}

  \begin{Claim}
    \label{ch:Xi}
Cup products for $X=C$ and $M=A$ turn $\Xi_{CA}$ into a DG algebra 
with unit~$\eta^A\varepsilon^C$, known as the \emph{convolution algebra}; we
write $\Xi$ when no ambiguity arises.
   \end{Claim}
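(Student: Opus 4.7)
The plan is to verify the three axioms of a DG algebra structure on $\Xi = \Hom(C,A)$ in turn: associativity, unit, and the Leibniz rule for the differential, which is the standard $\partial(\xi) = \dd^A \circ \xi - (-1)^{|\xi|}\xi \circ \dd^C$ on the Hom-complex. Each axiom is obtained by assembling the corresponding structural maps for $C$ and for $A$ and comparing via a commutative diagram.

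For associativity, I would rewrite the cup product intrinsically as the composite
\[
\xi \scup \zeta \;=\; \varphi^A \circ (\xi \otimes \zeta) \circ \psi^C \col C \to A,
\]
so that $(\xi \scup \zeta) \scup \eta$ and $\xi \scup (\zeta \scup \eta)$ both equal
\[
\varphi^A \circ (\varphi^A \otimes 1_A) \circ (\xi \otimes \zeta \otimes \eta) \circ (\psi^C \otimes 1_C) \circ \psi^C,
\]
once one invokes associativity of $\varphi^A$ on the algebra side and coassociativity of $\psi^C$ on the coalgebra side. In Sweedler-style notation $\psi^C(c) = \sum c_{(1)} \otimes c_{(2)}$, a bookkeeping check shows that the Koszul signs on the two sides both collapse to $(-1)^{|\eta|(|c_{(1)}|+|c_{(2)}|) + |\zeta||c_{(1)}|}$ after iterated coassociativity, confirming associativity.

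For the unit, the map $\eta^A \varepsilon^C \col C \xrightarrow{\varepsilon^C} k \xrightarrow{\eta^A} A$ has degree $0$, and the counit axiom $(\varepsilon^C \otimes 1_C) \psi^C = 1_C = (1_C \otimes \varepsilon^C)\psi^C$ combined with the unit axiom $\varphi^A(\eta^A \otimes 1_A) = 1_A = \varphi^A(1_A \otimes \eta^A)$ gives $(\eta^A \varepsilon^C) \scup \xi = \xi = \xi \scup (\eta^A\varepsilon^C)$ directly from the composite description, with no sign issues since $|\eta^A \varepsilon^C| = 0$. For the Leibniz rule, I would use that $\psi^C$ is a morphism of complexes (so $(\dd^C \otimes 1 + 1 \otimes \dd^C)\psi^C = \psi^C \dd^C$ with appropriate signs) and similarly for $\varphi^A$, and then apply the standard Leibniz rule for the differential on a composite of morphisms of graded vector spaces; the contributions of $\dd^C$ and $\dd^A$ on the middle factor $\xi \otimes \zeta$ give exactly $\partial(\xi)\scup\zeta + (-1)^{|\xi|}\xi\scup\partial(\zeta)$.

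The main obstacle, and the only real content, is the careful tracking of Koszul signs when passing between the diagrammatic description $\varphi^A \circ (\xi \otimes \zeta) \circ \psi^C$ and the element-wise formula \eqref{eq:cup}; once this is set up cleanly, all three verifications reduce to the structural axioms of $A$ and $C$ and the fact that $\psi^C$ and $\varphi^A$ are chain maps. I would present the proof diagrammatically to avoid writing out sign-heavy sums, noting the sign check only once to confirm compatibility with the element-wise formula.
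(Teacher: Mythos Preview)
Your proposal is correct and uses the same diagrammatic verification via the structural axioms of $A$ and $C$ that the paper uses. The only difference is organizational: the paper checks that $\eta^A\varepsilon^C$ is a right unit directly, then obtains associativity, the left unit, and the Leibniz rule as the special case $X=C$, $M=A$ of the next Claim (that $\Hom(X,M)$ is a left DG $\Xi_{CA}$-module), whereas you verify all three axioms directly for $\Xi$ itself; the underlying computations are identical.
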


The following string of equalities\footnote{In order to keep displays readable we write $|$ instead of 
$\otimes$.  Brackets are placed under those $\ub{\text{compositions of maps}}$  that are modified 
at the given step.  Thus, computations can be followed by checking for commutativity small 
diagrams involving only the selected terms.} shows that $\eta^A\varepsilon^C$ is a right unit for $\Xi$:
  \[
\ub{\xi\scup(\eta^A\varepsilon^C)}
=\varphi^{A}\ub{(\xi|\eta^A\varepsilon^C)}\psi^{C}
=\ub{\varphi^{A}(A|\eta^A)}(\xi|k)\ub{(C|\varepsilon^C)\psi^{C}}
=\ub{\id^A\xi\id^C}
=\xi
  \]

The remaining axioms for DG algebra follow from the next assertion: 

  \begin{Claim}
    \label{ch:cupXM}
Cup products turn $\Hom(X,M)$ into a left DG module over $\Xi_{CA}$.
   \end{Claim}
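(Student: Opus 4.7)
The plan is to verify the three axioms of a left DG module on $\Hom(X,M)$ over $\Xi_{CA}$ for the action $\xi\otimes\zeta\mapsto \xi\scup\zeta$, working throughout in the diagrammatic style used above for the right unit of $\Xi$. Rewrite the cup product as the composite
\[
\xi\scup\zeta=\varphi^{AM}\,(\xi\otimes\zeta)\,\psi^{CX}.
\]
Each of the three axioms then reduces to an instance of the structural compatibilities that define, respectively, the DG $A$-module $M$ and the DG $C$-comodule $X$.

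For associativity, inserting the definition on both sides shows that $(\xi\scup\xi')\scup\zeta$ and $\xi\scup(\xi'\scup\zeta)$ both expand into a single composite of $\xi\otimes\xi'\otimes\zeta$ sandwiched between $(\psi^C\otimes\id^X)\psi^{CX}=(\id^C\otimes\psi^{CX})\psi^{CX}$ on the right and $\varphi^{AM}(\varphi^A\otimes\id^M)=\varphi^{AM}(\id^A\otimes\varphi^{AM})$ on the left; the first equality is coassociativity of the coaction, the second is associativity of the action. For the left unit, $(\eta^A\varepsilon^C)\scup\zeta=\zeta$ is verified by a mirror image of the displayed string of equalities for the right unit of $\Xi$, using the counit identity $(\varepsilon^C\otimes\id^X)\psi^{CX}=\id^X$ and the unit identity $\varphi^{AM}(\eta^A\otimes\id^M)=\id^M$.

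The main step is the Leibniz rule
\[
\dd(\xi\scup\zeta)=\dd(\xi)\scup\zeta+(-1)^{|\xi|}\,\xi\scup\dd(\zeta),
\]
where $\dd$ on $\Hom(X,M)$ and on $\Xi$ is $\dd(\zeta)=\dd^M\zeta-(-1)^{|\zeta|}\zeta\dd^X$. Expanding the left side gives
\[
\dd^M\varphi^{AM}(\xi\otimes\zeta)\psi^{CX}-(-1)^{|\xi|+|\zeta|}\varphi^{AM}(\xi\otimes\zeta)\psi^{CX}\dd^X.
\]
Since $\varphi^{AM}$ and $\psi^{CX}$ are chain maps, we may replace $\dd^M\varphi^{AM}$ by $\varphi^{AM}\dd^{A\otimes M}$ and $\psi^{CX}\dd^X$ by $\dd^{C\otimes X}\psi^{CX}$. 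Applying the Koszul sign convention to distribute the tensor-product differentials through $\xi\otimes\zeta$ yields four terms that regroup, after cancellation of matched signs, into $\varphi^{AM}(\dd(\xi)\otimes\zeta)\psi^{CX}+(-1)^{|\xi|}\varphi^{AM}(\xi\otimes\dd(\zeta))\psi^{CX}$, as required.

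The main obstacle is the sign bookkeeping in the Leibniz computation: associativity and unitality are purely diagrammatic, but the Leibniz identity rests on a consistent application of the Koszul convention when pushing differentials of various degrees past $\xi$, $\zeta$, and the (co)action maps. The argument runs exactly in parallel with the standard proof that $\dd$ is a derivation on $\Xi$ itself, which is part of Claim~\ref{ch:Xi}.
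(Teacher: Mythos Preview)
Your proof is correct and follows essentially the same approach as the paper: both verify the left-unit, associativity, and Leibniz axioms by writing $\xi\scup\zeta=\varphi^{AM}(\xi|\zeta)\psi^{CX}$ and reducing each axiom to the corresponding (co)associativity or (co)unit identity for $M$ and $X$, together with the chain-map property of $\varphi^{AM}$ and $\psi^{CX}$ for the Leibniz rule. The paper spells out the associativity and Leibniz computations as explicit strings of equalities in its bracketed-composition notation, whereas you summarize the key identities being invoked, but the content is the same.
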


A computation similar to the one above shows that $\eta^A\varepsilon^C$ is a left unit
for cup products.  Their associativity results from the next string of equalities:
\begin{align*}
\ub{\tau\scup(\xi\scup\zeta)}
&=\varphi^{AM}\ub{(\tau|\xi\scup\zeta)}\psi^{CX}\\
&=\varphi^{AM}(\tau|M)\ub{(C|\xi\scup\zeta)}\psi^{CX}\\
&=\varphi^{AM}\ub{(\tau|M)(C|\varphi^{AM})}\ub{(C|\xi|\zeta)}\ub{(C|\psi^{CX})\psi^{CX}}\\
&=\ub{\varphi^{AM}(A|\varphi^{AM})}\ub{(\tau|A|M)(C|\xi|M)}\ub{(C|C|\zeta)(\psi^C|X)}\psi^{CX}\\
&=\varphi^{AM}\ub{(\varphi^A|M)(\tau|\xi|M)(\psi^C|M)}(C|\zeta)\psi^{CX}\\
&=\varphi^{AM}\ub{(\tau\scup\xi|M)(C|\zeta)}\psi^{CX}\\
&=\ub{\varphi^{AM}(\tau\scup\xi|\zeta)\psi^{CX}}\\
&=(\tau\scup\xi)\scup\zeta
  \end{align*}
Another calculation, where $H=\Hom(X,M)$, verifies the Leibniz rule:
 \begin{align*}
\ub{\dd^{H}}\ub{(\xi\scup\zeta)}
&=\ub{\dd^M\varphi^{AM}}(\xi|\zeta)\psi^{CX}-(-1)^{|\xi|+|\zeta|}\varphi^{AM}(\xi|\zeta)\ub{\psi^{CX}\dd^X}\\
&=\varphi^{AM}\ub{\dd^{A\otimes M}(\xi|\zeta)}\psi^{CX}-(-1)^{|\xi|+|\zeta|}\varphi^{AM}\ub{(\xi|\zeta)\dd^{C\otimes X}}\psi^{CX}\\
&=\ub{\varphi^{AM}(\dd^{\Xi}(\xi)|\zeta)\psi^{CX}}+(-1)^{|\xi|}\ub{\varphi^{AM}(\xi|\dd^{H}(\zeta))\psi^{CX}}\\
&=\dd^{\Xi}(\xi)\scup\zeta+(-1)^{|\xi|}\xi\scup\dd^{H}(\zeta)
  \end{align*}
 \end{bfchunk}

  \begin{bfchunk}{Cap products.}
    \label{ch:cap}
For each $\xi\in\Xi_{CA}$ form the composed map
  \[
\lambda^{\Xi(Y\otimes M)}(\xi)\col 
Y\otimes M\xra{\psi^{YC}\otimes M} Y\otimes C\otimes M\xra{Y\otimes \xi \otimes M} 
Y\otimes A\otimes M\xra{Y\otimes \varphi^{AM}} Y\otimes M
  \]
For $y\in Y$ and $m\in M$ the \emph{cap product} $\xi\scap(y\otimes m)$ is defined to be the image 
of $y\otimes m$ under the map $\lambda^{\Xi(Y\otimes M)}(\xi)$.  Thus, if $\psi^{YC}(y)=\sum_i y_i\otimes c_i$, then
  \begin{equation}
    \label{eq:cap}
\xi\scap(y\otimes m)=\sum_i (-1)^{|\xi||y_i|}y_i\otimes\xi(c_i)m
  \end{equation}

\begin{Claim}
    \label{ch:capYM}
Cap products turn $Y\otimes M$ into a left DG $\Xi_{CA}$-module.
 \end{Claim}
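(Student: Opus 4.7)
The plan is to verify, in sequence and in the same diagrammatic style as the preceding cup product claim, the three defining properties of a left DG module structure on $Y\otimes M$ over $\Xi=\Xi_{CA}$: unitality, associativity, and the Leibniz rule. Each reduces to a formal manipulation of compositions of chain maps, using the (co)associativity and (co)unit axioms for $Y$, $C$, $A$, and $M$, together with the fact that the structure maps $\psi^{YC}$ and $\varphi^{AM}$ commute with differentials.

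For unitality, I would unfold $\lambda^{\Xi(Y\otimes M)}(\eta^A\varepsilon^C)$ as $(Y\otimes\varphi^{AM})(Y\otimes\eta^A\otimes M)(Y\otimes\varepsilon^C\otimes M)(\psi^{YC}\otimes M)$ and collapse the middle using the right-counit axiom $(Y\otimes\varepsilon^C)\psi^{YC}=\id^{Y}$ and the left-unit axiom $\varphi^{AM}(\eta^A\otimes M)=\id^{M}$. For associativity, the key move is to unfold $\tau\scap(\xi\scap(y\otimes m))$, then insert coassociativity of the coaction, $(\psi^{YC}\otimes C)\psi^{YC}=(Y\otimes\psi^C)\psi^{YC}$, to merge the two instances of $\psi^{YC}$, and associativity of the action, $\varphi^{AM}(A\otimes\varphi^{AM})=\varphi^{AM}(\varphi^A\otimes M)$, to merge the two instances of $\varphi^{AM}$. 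The resulting composite has $(Y\otimes\varphi^A\otimes M)(Y\otimes\tau\otimes\xi\otimes M)(Y\otimes\psi^C\otimes M)$ as its middle piece, which is exactly $Y\otimes(\tau\scup\xi)\otimes M$, so the whole composite matches $\lambda^{\Xi(Y\otimes M)}(\tau\scup\xi)$ applied to $y\otimes m$.

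For the Leibniz rule, writing $T=Y\otimes M$, I would differentiate $\lambda^{\Xi(T)}(\xi)=(Y\otimes\varphi^{AM})(Y\otimes\xi\otimes M)(\psi^{YC}\otimes M)$. Since $\psi^{YC}$ and $\varphi^{AM}$ are chain maps, their differentials can be absorbed into the outer factors, leaving $\dd^{Y\otimes A\otimes M}(Y\otimes\xi\otimes M)-(-1)^{|\xi|}(Y\otimes\xi\otimes M)\dd^{Y\otimes C\otimes M}$ in the middle; by the Leibniz rule on tensor products of maps this reduces to $Y\otimes\dd^{\Xi}(\xi)\otimes M$ plus a term in which the outer differential on $T$ remains. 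Reassembling gives the desired identity $\dd^{T}(\xi\scap(y\otimes m))=\dd^{\Xi}(\xi)\scap(y\otimes m)+(-1)^{|\xi|}\xi\scap\dd^{T}(y\otimes m)$.

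The main obstacle is Koszul sign bookkeeping when commuting $\xi$ and $\tau$ (or their differentials) past the $Y$ factor; this is what produces the sign $(-1)^{|\xi||y_i|}$ in \eqref{eq:cap} and, in the associativity proof, combines two such signs into the single sign prescribed by $\tau\scup\xi$. Since the preceding proof of the cup product claim works out precisely the analogous manipulations in detail, this argument should transcribe from it by dualizing on the $Y$ side.
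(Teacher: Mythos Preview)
Your proposal is correct and follows essentially the same approach as the paper: verify unitality, associativity, and the Leibniz rule by the same diagrammatic manipulations, with coassociativity of $\psi^{YC}$ and associativity of $\varphi^{AM}$ doing the work in the associativity step. The paper writes out only the associativity computation $\lambda(\tau)\lambda(\xi)=\lambda(\tau\scup\xi)$ in full and defers unitality and the Leibniz rule to the analogous cup product verification in \ref{ch:cupXM}, exactly as you suggest in your final sentence.
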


Indeed, set $\lambda=\lambda^{\Xi(Y\otimes M)}$.  As in \ref{ch:cupXM}, it is easy to check that $\lambda$ 
is a morphism of complexes and that $\lambda(\eta^A\varepsilon^C)=\id^{Y\otimes M}$ holds.  We complete 
the verification that $\lambda$ is a representation of $\Xi_{CA}$ in $Y\otimes M$, see \ref{ch:DGAmod}, by 
the following computation:
\begin{align*}
\ub{\lambda(\tau)}\ub{\lambda(\xi)} 
&=(Y|\varphi^{AM)}(Y|\tau|M)\ub{(\psi^{YC}|M)(Y|\varphi^{AM})}(Y|\xi|M)(\psi^{YC}|M)\\
&=(\varphi^{AM}|M)\ub{(Y|\tau|M)(Y|C|\varphi^{AM})}\ub{(\psi^{YC}|A|M)(Y|\xi|M)}(\psi^{YC}|M)\\
&=\ub{(\varphi^{AM}|M)(Y|A|\varphi^{AM})}\ub{(Y|\tau|A|M)(Y|C|\xi|M)}\ub{(\psi^{YC}|C|M)(\psi^{YC}|M)}\\
&=(\varphi^{AM}|M)\ub{(Y|\varphi^A|M)(Y|\tau|\xi|M)(Y|\psi^C|M)}(\psi^{YC}|M)\\
&=\ub{(Y|\varphi^{AM})(Y|\tau\scup\xi|M)(\psi^{YC}|M)}\\
&=\lambda(\tau\scup\xi)
  \end{align*}
 \end{bfchunk}

  \begin{bfchunk}{Opposite cup products.}
Set $\Xi_{CA}^{\mathsf o}=\Hom(C,A)$.

The \emph{opposite cup product} of $\xi\in\Xi_{CA}^{\mathsf o}$ and $\zeta\in\Hom(Y,N)$ is the composite map
  \[
\xi\ocup\zeta\col C\xra{\,\psi^{YC}\,} Y\otimes C\xra{\,(-1)^{|\zeta||\xi|}\zeta\otimes\xi\,} 
N\otimes A\xra{\,\varphi^{NA}\,} N 
  \]

For $Y=C$ and $N=A$ comparison with \ref{ch:cup} gives $\xi\ocup\zeta=(-1)^{|\xi||\zeta|}\zeta\scup\xi$, hence:

\begin{Claim}
    \label{ch:Xiop}
Opposite cup products for $Y=C$ and $N=A$ turn $\Xi_{CA}^{\mathsf o}$ into \emph{the opposite DG algebra} of
$\Xi_{CA}$ from \ref{ch:Xi}; we write $\Xi\avop$ when there is no ambiguity.
      \end{Claim}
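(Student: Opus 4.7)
The plan is to reduce the claim directly to Claim \ref{ch:Xi} via the sign-twisted identity
\[
\xi \ocup \zeta = (-1)^{|\xi||\zeta|}\,\zeta \scup \xi \qquad (\xi,\zeta \in \Hom(C,A))
\]
noted immediately before the claim in the text. First I would verify this identity by reading off the definitions of $\ocup$ and $\scup$ side by side in the case $X=Y=C$, $M=N=A$: the comultiplication $\psi^C$ and the multiplication $\varphi^A$ appear in both composites in the same role, and the explicit Koszul sign $(-1)^{|\zeta||\xi|}$ together with the swap $\zeta\otimes\xi$ versus $\xi\otimes\zeta$ produces precisely the stated relation.

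Granted the identity, I would observe that it is by construction the multiplication law on the opposite DG algebra of $\Xi_{CA}$. The underlying complex is unchanged---both $\Xi_{CA}$ and $\Xi_{CA}\avop$ are $\Hom(C,A)$ equipped with the intrinsic Hom differential---the unit $\eta^A\varepsilon^C$ is the same (it has degree $0$, so the Koszul twist is trivial), and the product is reversed with the standard sign. Since Claim \ref{ch:Xi} makes $\Xi_{CA}$ into a DG algebra, its opposite is automatically a DG algebra: associativity of $\ocup$ reduces to associativity of $\scup$ after comparing the exponents
\[
|\xi||\zeta|+|\tau|(|\xi|+|\zeta|) \quad\text{and}\quad |\tau||\xi|+|\zeta|(|\tau|+|\xi|),
\]
which are manifestly equal; the Leibniz rule for $\ocup$ follows in the same way from the Leibniz rule for $\scup$ established at the end of \ref{ch:cup}.

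The main obstacle, such as it is, consists solely of sign bookkeeping; no new ideas are required beyond those used to prove Claim \ref{ch:Xi}. The content of the present claim is really just the identification of the sign-twisted product $\ocup$ with the abstract opposite algebra construction, which is immediate from the identity above.
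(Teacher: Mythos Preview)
Your proposal is correct and follows exactly the paper's approach: the paper simply records the identity $\xi\ocup\zeta=(-1)^{|\xi||\zeta|}\zeta\scup\xi$ (obtained by comparing the definition of $\ocup$ with that of $\scup$ in \ref{ch:cup}) and concludes with ``hence,'' leaving the identification with the opposite algebra implicit. Your write-up merely spells out the obvious consequences of that identity in slightly more detail.
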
 

The next assertion is verified by computations similar to those in \ref{ch:cupXM}.

 \begin{Claim}
    \label{ch:cupYN}
Opposite cup products turn $\Hom(Y,N)$ into a left DG $\Xi_{CA}^{\mathsf o}$-module.
      \end{Claim}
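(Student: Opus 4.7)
The plan is to verify the three DG module axioms for $\ocup$ (unit, Leibniz rule, associativity with respect to the opposite multiplication on $\Xi_{CA}^{\mathsf o}$) by diagrammatic manipulations parallel to those of \ref{ch:cupXM}, but with the \emph{right}-handed structures $\psi^{YC}$ and $\varphi^{NA}$ in place of $\psi^{CX}$ and $\varphi^{AM}$. The required inputs are: right coassociativity $(\psi^{YC}\otimes C)\psi^{YC}=(Y\otimes\Delta^C)\psi^{YC}$, right associativity $\varphi^{NA}(\varphi^{NA}\otimes A)=\varphi^{NA}(N\otimes\varphi^A)$, the counit and unit identities, and the fact that $\psi^{YC}$ and $\varphi^{NA}$ are chain maps.

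First I would dispose of the two routine axioms. The identity $(\eta^A\varepsilon^C)\ocup\zeta=\zeta$ follows by the same one-line computation used to verify that $\eta^A\varepsilon^C$ is a right unit for $\Xi$, now invoking $(Y\otimes\varepsilon^C)\psi^{YC}=\id^Y$ and $\varphi^{NA}(N\otimes\eta^A)=\id^N$. The Leibniz rule
\[
\dd(\xi\ocup\zeta)=\dd(\xi)\ocup\zeta+(-1)^{|\xi|}\xi\ocup\dd(\zeta)
\]
is a transcription of the last display of \ref{ch:cupXM}: the Koszul sign $(-1)^{|\zeta||\xi|}$ built into the definition of $\ocup$ is preserved by the Leibniz expansion of $\dd^{\Xi}$ and $\dd^{\Hom(Y,N)}$, and the remaining manipulations are identical.

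The substantive step is associativity. Expanding twice from the definition and applying the coassociativity of $\psi^{YC}$ together with the associativity of $\varphi^{NA}$ brings $\tau\ocup(\xi\ocup\zeta)$ to the form
\[
(-1)^{(|\xi|+|\zeta|)|\tau|+|\zeta||\xi|}\,\varphi^{NA}\bigl(\zeta\otimes\varphi^A(\xi\otimes\tau)\Delta^C\bigr)\psi^{YC}.
\]
The inner composite is precisely $\xi\scup\tau$, so the right side equals $(-1)^{|\tau||\xi|}(\xi\scup\tau)\ocup\zeta$, which is $(\tau\cdot_{\mathsf o}\xi)\ocup\zeta$ by \ref{ch:Xiop}. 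The main obstacle is the sign bookkeeping: each application of $\ocup$ contributes a Koszul flip and the opposite multiplication contributes a third, and one must check the cubic sign identity $|\tau||\xi|+|\zeta|(|\xi|+|\tau|)=(|\xi|+|\zeta|)|\tau|+|\zeta||\xi|$. Once this is done the diagrammatic part reduces, step for step, to the associativity computation in \ref{ch:cupXM} read in a mirror; indeed one could alternatively obtain the claim by regarding $(Y,N)$ as left $(C^{\mathsf{op}},A^{\mathsf{op}})$-data and invoking \ref{ch:cupXM} directly, but the direct verification above fits the style of the surrounding text.
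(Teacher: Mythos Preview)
Your proposal is correct and follows exactly the approach the paper indicates: the paper's own ``proof'' of this claim consists of the single sentence ``verified by computations similar to those in \ref{ch:cupXM},'' and your outline carries out precisely those parallel computations with the right-handed structures, including the sign check needed for associativity against the opposite multiplication. One cosmetic point: the paper writes $\psi^C$ for the coproduct, not $\Delta^C$.
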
 
      \end{bfchunk} 

  \begin{bfchunk}{Opposite cap products.}
    \label{ch:ocap}
For each $\xi\in\Xi_{CA}^{\mathsf{o}}$ form the composed map
  \[
\lambda^{\Xi\avop(N\otimes X)}(\xi)\col 
N\otimes X\xra{N\otimes\psi^{CX}} N\otimes C\otimes X\xra{N\otimes\xi\otimes X} 
N\otimes A\otimes X\xra{\varphi^{NA}\otimes X} N\otimes X
  \]
For $n\in N$ and $x\in X$ the \emph{opposite cap product} $\xi\ocap(y\otimes m)$ is defined to be
$\lambda^{\Xi\avop(N\otimes X)}(\xi)(n\otimes x)$.  Thus, if $\psi^{CX}(c)=\sum_i c_i\otimes x_i$, then
  \begin{equation}
    \label{eq:ocap}
\xi\ocap(n\otimes x)=\sum_i (-1)^{|\xi||n|}n\xi(c_i)\otimes x_i
  \end{equation}

  \begin{Claim}
    \label{ch:capNX}
Opposite cap products turn $N\otimes X$ into a left DG $\Xi_{CA}^{\mathsf o}$-module.
  \end{Claim}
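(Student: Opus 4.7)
The plan is to mirror the verification given for Claim \ref{ch:capYM}, exploiting the fact that the roles of the ``left'' and ``right'' factors are interchanged: here the $A$-action acts on the right slot $N$ and the $C$-coaction acts on the left slot $X$, so that all the diagrams used for cap products on $Y\otimes M$ have analogues obtained by reflecting across the tensor symbol. Concretely, set $\lambda=\lambda^{\Xi\avop(N\otimes X)}$ and verify three things: (i) $\lambda$ is a morphism of complexes $\Xi_{CA}^{\mathsf o}\to\End(N\otimes X)$; (ii) $\lambda(\eta^A\varepsilon^C)=\id^{N\otimes X}$; (iii) $\lambda(\tau)\lambda(\xi)=\lambda(\tau\ocup\xi)$, where the product on the left side of (iii) is composition of endomorphisms (which is the multiplication in $\End(N\otimes X)$, so in particular compatible with the fact that $\Xi_{CA}^{\mathsf o}$ is the opposite algebra of $\Xi_{CA}$).

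For (ii), one uses the counit axiom $(\varepsilon^C\otimes X)\psi^{CX}=\id^X$ on the right slot and the unit axiom $\varphi^{NA}(N\otimes\eta^A)=\id^N$ on the left slot, exactly as in the unit calculation in \ref{ch:cup}. For (i), a Leibniz-rule calculation like the one closing \ref{ch:cupXM} does it: expand $\dd(\lambda(\xi))$ through the three-step composite defining $\lambda(\xi)$, use that $\psi^{CX}$ and $\varphi^{NA}$ are chain maps, and collect the terms to recognize $\lambda(\dd^{\Xi\avop}\xi)$.

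For (iii), the key computation is a transcription of the chain of equalities used for \ref{ch:capYM}. Writing compositions with the convention that the bar $|$ denotes $\otimes$, one expands
\[
\lambda(\tau)\lambda(\xi)=(\varphi^{NA}|X)(N|\tau|X)(N|\psi^{CX})(\varphi^{NA}|X)(N|\xi|X)(N|\psi^{CX}),
\]
slides the middle pair $(N|\psi^{CX})(\varphi^{NA}|X)$ past each other (they act on disjoint tensor factors), then applies associativity of $\varphi^{NA}$ on the left slot and coassociativity of $\psi^{CX}$ on the right slot to gather $(N|\varphi^A|X)(N|\tau|\xi|X)(N|\psi^C|X)$ in the middle. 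This middle block is precisely $N\otimes(\tau\scup\xi)\otimes X$ composed appropriately, and since $\tau\ocup\xi=(-1)^{|\tau||\xi|}\xi\scup\tau$, careful sign bookkeeping yields $\lambda(\tau\ocup\xi)$, confirming that $\lambda$ is an antirepresentation of $\Xi_{CA}$, i.e.\ a representation of $\Xi_{CA}^{\mathsf o}$.

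The main obstacle is purely the sign tracking in step (iii): the opposite cup product carries the Koszul sign $(-1)^{|\xi||\tau|}$, and the cap-product formula \eqref{eq:ocap} carries $(-1)^{|\xi||n|}$, so one has to recheck that all these signs conspire so that composing $\lambda(\tau)$ after $\lambda(\xi)$ gives $\lambda(\tau\ocup\xi)$ rather than $\lambda(\xi\scup\tau)$. Once this is sorted out (by keeping track of which map crosses which homogeneous element at each Koszul swap), the proof is completely formal and structurally identical to \ref{ch:capYM}.
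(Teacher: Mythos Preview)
Your proposal is correct and follows exactly the approach the paper takes: the paper's entire proof is the sentence ``The computations are parallel to those used for \ref{ch:capYM}, with $\lambda=\lambda^{\Xi\avop(N\otimes X)}$,'' and you have simply spelled out that parallel computation.

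One small correction to your sketch: after sliding and applying (co)associativity, the middle block is $(N|\varphi^A|X)(N|A|\tau|X)(N|\xi|C|X)(N|\psi^C|X)$, i.e.\ $\xi$ hits the first $C$-factor and $\tau$ the second, not the other way around. By \eqref{eq:cx0.5} one has $(A|\tau)(\xi|C)=(-1)^{|\tau||\xi|}(\xi\otimes\tau)$, so the middle equals $(-1)^{|\tau||\xi|}N\otimes(\xi\scup\tau)\otimes X=N\otimes(\tau\ocup\xi)\otimes X$, and hence $\lambda(\tau)\lambda(\xi)=\lambda(\tau\ocup\xi)$ directly, with no further sign juggling needed. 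Your phrase ``careful sign bookkeeping'' covers this, but writing $(N|\tau|\xi|X)$ and $\tau\scup\xi$ in the intermediate step is a slip you should fix if you write this out in full.
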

The computations are parallel to those used for \ref{ch:capYM}, with $\lambda=\lambda^{\Xi\avop(N\otimes X)}$.  
             \end{bfchunk}

Convolution algebras are functorial in both arguments:

      \begin{bfchunk}{Morphisms.}
        \label{ch:morphisms}
Each pair $(\alpha,\gamma)$, where $\alpha\col A'\to A$ is a morphism of DG algebras and 
$\gamma\col C\to C'$ one of DG coalgebras, induces morphisms of DG algebras 
  \[
\Xi_{\gamma\alpha}\col \Xi_{C'A'}\to\Xi_{CA}
  \quad\text{and}\quad
\Xi_{\gamma\alpha}^{\mathsf o}\col\Xi_{C'A'}^{\mathsf o}\to\Xi_{CA}^{\mathsf o}
  \quad\text{given by}\quad
\xi\mapsto\alpha\xi\gamma 
  \] 
        \end{bfchunk}

   \begin{bfchunk}{Adjunction.}
    \label{ch:twistAdj0}
Given morphisms $\vartheta\col X\to C\otimes M$ of left DG $C$-comodules and $\theta\col A\otimes X\to M$ 
of left DG $A$-modules the composed maps
  \begin{align*}
\omega^{CA}(\vartheta)&\col 
A\otimes X
\xra{A\otimes \vartheta}
A\otimes C \otimes M
\xra{A\otimes\varepsilon^{C}\otimes M}
A\otimes M
\xra{\varphi^{AM}}
M
  \\
\omega^{AC}(\theta)&\col 
X
\xra{\psi^{CX}}
C\otimes X 
\xra{C\otimes\eta^{A}\otimes X}
C\otimes A\otimes X
\xra{C\otimes\theta}
C\otimes M
  \end{align*}
are morphisms of left DG $A$-modules and of left DG $C$-comodules, respectively.

Similarly, morphisms $\vartheta\col Y\to N\otimes C$ and $\theta\col Y\otimes A\to N$ yield morphisms
  \begin{align*}
\omega^{CA}(\vartheta)&\col 
Y\otimes A
\xra{\vartheta\otimes A}
N\otimes C \otimes A
\xra{N\otimes\varepsilon^{C}\otimes A}
A\otimes M
\xra{\varphi^{NA}}
N
  \\
\omega^{AC}(\theta)&\col 
Y
\xra{\psi^{YC}}
Y\otimes C
\xra{Y\otimes\eta^{A}\otimes C}
Y\otimes A\otimes C
\xra{\theta\otimes C}
N\otimes C
  \end{align*}
of right DG $A$-modules and of right DG $C$-comodules, respectively.
   \end{bfchunk}

In the next lemma, as elsewhere else in the paper, pairs of adjoint functors are displayed 
so that the left adjoint appears on top.

  \begin{lemma}
    \label{lem:omega}
The following maps are inverse isomorphisms of complexes 
  \begin{alignat}{2}
    \label{eq:twistAdjXM}
\xymatrixcolsep{2pc}
\xymatrixrowsep{2pc} 
\omega^{CA}&\col 
\xymatrix{
\Hom_C(X,C\otimes M) 
\ar@{->}[r]<1ex>_-{\cong}
&\Hom_A(A\otimes X, M)
\ar@{->}[l]<1.2ex>
}
&&:\!\omega^{AC}
\\
    \label{eq:twistAdjYN}
\omega^{CA}&\col 
\xymatrix{
\Hom_C(Y,N\otimes C) 
\ar@{->}[r]<1ex>_-{\cong}
& \Hom_A(Y\otimes A, N)
\ar@{->}[l]<1.2ex>
}
&&:\!\omega^{AC}
  \end{alignat}
They commute with the actions of $\Xi_{CA}$ and $\Xi_{CA}^{\mathsf o}$ in the
following sense:
  \begin{align*}
\omega^{CA}(\lambda^{\Xi(C\otimes M)}(\xi)\circ\vartheta)
&=
\omega^{CA}(\vartheta)\circ\lambda^{\Xi\avop(A\otimes X)}(\xi)
  \\
\omega^{AC}(\theta\circ\lambda^{\Xi\avop(A\otimes X)}(\xi))
&=
\lambda^{\Xi(C\otimes M)}(\xi)\circ\omega^{AC}(\theta)
  \\
\omega^{CA}(\lambda^{\Xi\avop(N\otimes C)}(\xi)\circ\vartheta)
&=
\omega^{CA}(\vartheta)\circ\lambda^{\Xi(Y\otimes A)}(\xi)
  \\
\omega^{AC}(\theta\circ\lambda^{\Xi(Y\otimes A)}(\xi))
&=
\lambda^{\Xi\avop(N\otimes C)}(\xi)\circ\omega^{AC}(\theta)
   \end{align*}
  \end{lemma}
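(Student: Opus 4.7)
The plan is to verify three things in order: (i) each of $\omega^{CA}$ and $\omega^{AC}$ is well-defined on the indicated $\Hom$-spaces and is a morphism of complexes; (ii) they are mutually inverse; and (iii) the four equivariance identities hold. All three reduce to diagram chases built from associativity of $\varphi^{AM}$, coassociativity of $\psi^{CX}$, and the counit/unit axioms, carried out in the string-of-equalities style already used in \ref{ch:cupXM} and \ref{ch:capYM}.

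First, $\omega^{CA}(\vartheta)$ is $A$-linear because it ends in $\varphi^{AM}(A\otimes -)$, whose $A$-linearity is the associativity axiom for the $A$-action on $M$; dually, $\omega^{AC}(\theta)$ is $C$-colinear because it begins with $\psi^{CX}$, whose coassociativity is the comodule axiom for $X$. The same argument handles the right-module version \eqref{eq:twistAdjYN}. Compatibility with differentials is automatic, since both $\omega^{CA}$ and $\omega^{AC}$ are built entirely from structure maps that are themselves chain maps, so they commute with the standard $\Hom$-complex differentials.

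Next, to prove $\omega^{AC}\omega^{CA}(\vartheta)=\vartheta$, I would expand both sides and apply the $C$-comodule property $(C\otimes\vartheta)\psi^{CX}=(\Delta^C\otimes M)\vartheta$ to rewrite the composite as $(C\otimes\varepsilon^C\otimes M)(\Delta^C\otimes M)\vartheta$; the counit axiom $(C\otimes\varepsilon^C)\Delta^C=\id^C$ then collapses this to $\vartheta$. The reverse composition $\omega^{CA}\omega^{AC}(\theta)=\theta$ is dual: one uses the $A$-module property of $\theta$ together with the unit axiom $\varphi^{AM}(\eta^A\otimes M)=\id^M$. The right-sided versions in \eqref{eq:twistAdjYN} are identical by symmetry in the roles of module/comodule.

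For the four equivariance formulas, I would expand each side using the bracketed-composition notation and collapse the resulting diagrams using the same three inputs together with the defining properties of $\vartheta$ and $\theta$. For instance, in $\omega^{CA}(\lambda^{\Xi(C\otimes M)}(\xi)\vartheta)=\omega^{CA}(\vartheta)\lambda^{\Xi\avop(A\otimes X)}(\xi)$, the coaction on $C\otimes M$ appearing in $\lambda^{\Xi(C\otimes M)}(\xi)$ is $\Delta^C\otimes M$, and the terminal $\varepsilon^C$ in $\omega^{CA}$ collapses one copy of $\Delta^C$ against that coaction factor by the counit axiom, leaving $\xi$ acting through $\psi^{CX}$ on the $A\otimes X$ side, which is precisely the opposite cap product of \ref{ch:ocap}. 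The remaining three identities follow by the same pattern with $(X,M)$ replaced by $(Y,N)$ or with the roles of $\omega^{CA}$ and $\omega^{AC}$ interchanged. The main obstacle is purely bookkeeping: tracking Koszul signs and monitoring which tensor factor carries the action of $\xi$; the bracketed-composition convention reduces each verification to a sequence of small diagrams, each commutative by a single axiom.
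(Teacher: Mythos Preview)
Your proposal is correct and follows essentially the same route as the paper: verify that the maps are chain maps, establish the bijection, and then check one equivariance identity by a string-of-equalities computation, deferring the rest to symmetry. The only minor variation is in step (ii): rather than computing $\omega^{AC}\omega^{CA}$ and $\omega^{CA}\omega^{AC}$ directly as you propose, the paper observes that the bijection factors as a composite of two standard adjunctions,
\[
\Hom_A(A\otimes X,M)\ \cong\ \Hom(X,M)\ \cong\ \Hom_C(X,C\otimes M),
\]
which makes the inverse property immediate; your direct verification via the counit and unit axioms is of course equivalent and equally short.
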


  \begin{proof}
Indeed, $\omega^{AC}$ and $\omega^{CA}$ are morphisms of complexes because they are 
induced by morphisms of DG modules, respectively, of DG comodules.  The adjunction is
best seen by decomposing it into standard pieces, which for the first one are
  \[
\xymatrixcolsep{1.5pc}
\xymatrixrowsep{1pc} 
\xymatrix{
\Hom_A(A\otimes X, M)
\ar@{<-}[r]<1ex>_-{\cong}
&\Hom(X, M)
\ar@{<-}[l]<1.2ex>
\ar@{<-}[r]<1ex>_-{\cong}
&\Hom_C(X,C\otimes M) 
\ar@{<-}[l]<1.2ex>
}
  \]

The first commutation relation in the lemma above is verified as follows:
  \begin{align*}
\ub{\omega^{CA}(\vartheta)}\circ\ub{\lambda^{\Xi\avop(A\otimes C)}(\xi)}
&=\varphi^{AM}(A|\varepsilon^{C}|M)\ub{(A|\vartheta)(\varphi^{A}|X)}(A|\xi |X)(A|\psi^{CX})\\
&=\varphi^{AM}\ub{(A|\varepsilon^{C}|M)(\varphi^{A}|C|X)}\ub{(A|A|\vartheta)(A|\xi |X)}(A|\psi^{CX})\\
&=\ub{\varphi^{AM}(\varphi^{A}|M)}\ub{(A|A|\varepsilon^{C}|M)(A|\xi|C|M)}\ub{(A|C|\vartheta)(A|\psi^{CX})}\\
&=\varphi^{AM}(A|\varphi^{AM})(A|\xi|M)\ub{(A|C|\varepsilon^C|M)(A|\psi^{C(C\otimes M)})}(A|\vartheta)\\
&=\varphi^{AM}(A|\varphi^{AM})\ub{(A|\xi|M)(A|\varepsilon^C|C|M)}(A|\psi^{C}|M)(A|\vartheta)\\
&=\varphi^{AM}\ub{(A|\varphi^{AM})(A|\varepsilon^C|A|M)}(A|C|\xi|M)(A|\psi^{C}|M)(A|\vartheta)\\
&=\ub{\varphi^{AM}(A|\varepsilon^C|M)}\ub{(A|C|\varphi^{AM})(A|C|\xi|M)(A|\psi^{C}|M)(A|\vartheta)}\\
&=\omega^{CA}(\lambda^{\Xi(C\otimes M)}(\xi )\circ\vartheta)
  \end{align*}
Similar calculations establish the remaining relations. 
 \end{proof}
 
  \begin{Notes}
Only the action of specific elements of the convo\-lution algebra is needed to twist
tensor products, but the DG module structures introduced above help in computations
and demystify some formulas.   This point of view is emphasized by 
Huebschmann~\cite{Hu} and is taken up by Loday and Vallette in the recent book \cite{LV}.
  \end{Notes}
  
\section{Twisted tensor products}
\label{TwistedTensorProducts}

\numberwithin{equation}{theorem}

A twisting map, also called twisting cochain or twisting morphism, is an element of the 
convolution algebra that allows for \emph{uniform} and \emph{functorial} modifications
of the differentials of \emph{all} tensor products of DG modules with DG comodules.

The K\"unneth formula shows that ordinary tensor products preserve quasi-isomor\-phisms.
We are interested to know when twisted tensor products have a similar property.  As  
twists scramble the original ``good'' differential of a tensor product, the idea is to use those
that do not interfere too much with the differentials of one factor.  This can be expressed in 
terms of filtrations and analyzed by means of the associated spectral sequences.  To reach
conclusions one needs guarantees of convergence, which are provided by appropriate bounds on the factors.

  \begin{notation}
    \label{ch:notation2}
The notation from \ref{ch:notation1} is in force and we set $\Xi=\Xi_{CA}$; see \ref{ch:cup}.
  \medskip
  
A \emph{twisting map} is a $k$-linear homomorphism $\tau\col C\to A$ of degree $-1$, such that
 \begin{equation}
    \label{eq:twist}
  \dd^A\tau+\tau\dd^C=\varphi^A(\tau\otimes\tau)\psi^C 
  \end{equation}
In view of \ref{ch:Xi} and \ref{ch:Xiop}, this is equivalent to either one of the equalities
  \begin{equation}
    \label{eq:twistXi}
\dd^{\Xi}(\tau)=\tau\scup\tau
  \qquad\text{or}\qquad
\dd^{\Xi\avop}(\tau)=-\tau\ocup\tau 
  \end{equation}
That is, to either condition: $\tau$ is a twister in $\Xi$ or that $-\tau$ is one in $\Xi\avop$; see \ref{ch:twisters}(1).
     \end{notation}

  \begin{bfchunk}{Twisted tensor products.}
    \label{ch:twist}
Let $N\rotimes\tau X$ be the complex produced by the construction in \ref{ch:twisters}(2), applied to the left DG 
$\Xi\avop$-module $N\otimes X$ from \ref{ch:capNX} and a twisting map $\tau$; in detail: $(N\rotimes\tau X)\nat=(N\otimes X)\nat$ and
  \begin{equation}
    \label{eq:twistXi2}
\dd^{N\rotimes{\tau} X}=\dd^{N\otimes X}+\lambda^{\Xi\avop(N\otimes X)}(\tau)
  \end{equation}
It is clear that $A\rotimes{\tau} X$ is a left DG $A$-module with action $\varphi^{A(A\rotimes{\tau} X)}:=\varphi^A\otimes X$
and $N\rotimes{\tau} C$ is a right DG $C$-comodule with coaction $\psi^{(N\rotimes{\tau} C)C}:=N\otimes \psi^C$

On the other hand, let $Y\avlotimes\tau M$ denotes the complex obtained in \ref{ch:twisters}(2) 
from the left DG $\Xi$-module $Y\otimes M$, see \ref{ch:capYM}; it has $(Y\avlotimes\tau M)\nat=(Y\otimes M)\nat$ and
  \begin{equation}
    \label{eq:twistXi1}
\dd^{Y\avlotimes{\tau} M}=\dd^{Y\otimes M}-\lambda^{\Xi(Y\otimes M)}(\tau)
  \end{equation}
Now $Y\avlotimes{\tau} A$ is a right DG $A$-module with action $\varphi^{(Y\avlotimes{\tau} A)A}:=Y\otimes\varphi^A$
and $C\avlotimes{\tau} M$ is a left DG $C$-comodule with coaction $\psi^{C(C\avlotimes{\tau} M)}:=\psi^C\otimes M$.

Both constructions are functorial and go by the name of \emph{twisted tensor products}.  

We always place the subscript $\tau$ on the side of the comodule argument.
     \end{bfchunk}

    \begin{bfchunk}{Associativity.}
    \label{prop:ass}
The $k$-linear endomorphism of $(N\otimes C\otimes M)\nat$ given by
  \[
\dd^{N\rotimes\tau C\avlotimes\tau M}
=\dd^{N\otimes C\otimes M}+\lambda^{\Xi\avop(N\otimes C)}(\tau)\otimes M-N\otimes\lambda^{\Xi(C\otimes M)}(\tau)
  \]
defines a complex $N\rotimes\tau C\avlotimes\tau M$ and the natural maps are isomorphisms of complexes
  \begin{equation}
     \label{eq:assoTwistsNCM}
N\rotimes\tau(C\avlotimes\tau M)\cong N\rotimes\tau C\avlotimes\tau M\cong(N\rotimes\tau C)\avlotimes\tau M 
  \end{equation}
  
The $k$-linear endomorphism of $(Y\otimes A\otimes X)\nat$ given by
  \[
\dd^{Y\avlotimes\tau A\rotimes\tau X}
=\dd^{Y\otimes A\otimes X}-\lambda^{\Xi(Y\otimes A)}(\tau)\otimes X+Y\otimes\lambda^{\Xi\avop(A\otimes X)}(\tau)
  \]
defines a complex $Y\avlotimes\tau A\rotimes\tau X$ and the natural maps are isomorphisms of complexes
  \begin{equation}
     \label{eq:assoTwistsYAX}
Y\avlotimes\tau(A\rotimes\tau X)\cong Y\avlotimes\tau A\rotimes\tau X\cong(Y\avlotimes\tau A)\rotimes\tau X
  \end{equation}

We use the isomorphisms above to identify the complexes involved.
   \end{bfchunk}

   \begin{bfchunk}{Adjointness.}
    \label{ch:twistAdj}
There are pairs of mutually inverse natural isomorphisms
 \begin{alignat}{2}
\xymatrixcolsep{2pc}
\xymatrixrowsep{2pc} 
\omega^{AC}&
\col \xymatrix{
\Hom_A(A\rotimes{\tau} X, M)
\ar@{<-}[r]<1ex>_-{\cong}
&\Hom_C(X,C\avlotimes{\tau} M) 
\ar@{<-}[l]<1.2ex>
}
&&:\!\omega^{CA}
\\
\omega^{AC}&\col 
\ \xymatrix{
\Hom_A(Y\rotimes{\tau} A, N)
\ar@{<-}[r]<1ex>_-{\cong}
&\Hom_C(Y,N\avlotimes{\tau} C)
\ar@{<-}[l]<1.2ex>
}
&&:\!\omega^{CA}
  \end{alignat}
of complexes, given by the maps defined in \ref{ch:twistAdj0}: This follows from the expressions 
for the differentials in \ref{ch:twist} and the commutation formulas in Lemma \ref{lem:omega}. 
   \end{bfchunk}

   \begin{bfchunk}{Naturality.}\!
     \label{ch:funTensor}
When $\gamma\col C\to C'$ is a morphism of DG coalgebras and $\tau'\col C'\to A$ is 
a twisting map, \ref{ch:morphisms} implies that the map $\tau=\tau'\gamma\col C\to A$ is twisting.  
By using \eqref{eq:twistXi2} and \eqref{eq:twistXi1} it is easy to see that the assignments 
$c\otimes a\mapsto \gamma(c)\otimes a$ and $a\otimes c\mapsto a\otimes\gamma(c)$
define  $\gamma$-equivariant morphisms of DG $A$-modules:
  \begin{equation}
    \label{eq:funGamma}
\gamma\otimes A\col C\avlotimes{\tau}A\to C'\avlotimes{\tau'}A
  \quad\text{and}\quad
A\otimes\gamma\col A\rotimes{\tau}C\to A\rotimes{\tau'}C'
    \end{equation}

Similarly, when $\alpha\col A'\to A$ is a morphism of DG algebras and $\tau'\col C\to A'$ is 
a twisting map the map $\tau=\alpha\tau'\col C\to A$ is twisting and $c\otimes a'\mapsto c\otimes\alpha(a')$ 
and $a'\otimes c\mapsto \alpha(a')\otimes c$ define  $\alpha$-equivariant morphisms of DG $C$-comodules:
  \begin{equation}
    \label{eq:funAlpha}
C\otimes\alpha\col C\avlotimes{\tau'}A'\to C\avlotimes{\tau}A
  \quad\text{and}\quad
\alpha\otimes C\col A'\rotimes{\tau'}C\to A\rotimes{\tau}C
  \end{equation}
    \end{bfchunk}

\begin{proposition}
      \label{prop:ACCA}
Assume that $A$ is augmented, $C$ is coaugmented, and
  \[
\text{\rm(p)}\quad
\ov A_{\les0}=0=\ov C_{\les1} 
\qquad\text{respectively}\qquad
\text{\rm(n)}\quad
\ov A_{\ges-1}=0=\ov C_{\ges0}
  \]
  \begin{enumerate}[\rm(1)]
  \item
If $\gamma\col C\to C'$ is a quasi-isomorphism of coaugmented DG coalgebras with $\ov C'{}_{\les-1}=0$, respectively, 
$\ov C{}'_{\ges0}=0$, then $A\otimes\gamma$ and $\gamma\otimes A$ in \eqref{eq:funGamma} are homotopy equivalences.
  \item
If $\alpha\col A'\to A$ is a quasi-isomorphism of augmented DG algebras with $\ov A{}'_{\les0}=0$, respectively, 
$\ov A{}'_{\ges-1}=0$, then $\alpha\otimes C$ and $C\otimes\alpha$ in \eqref{eq:funAlpha} are quasi-isomorphisms.
 \end{enumerate}
     \end{proposition}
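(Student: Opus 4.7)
The strategy is to equip each twisted tensor product with an increasing filtration coming from the coalgebra-side factor, observe that the twist strictly decreases this filtration so that the associated graded becomes an ordinary (untwisted) tensor product, and then run a K\"unneth-plus-spectral-sequence argument. Part (1) is further boosted to a homotopy equivalence by noting that the same filtration exhibits every twisted tensor product in sight as a semi-free DG $A$-module. I discuss $A\rotimes\tau C$; the three companion cases go through by symmetric manipulations.

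In case (p), put on $C$ its conilpotent filtration: $F_{-1}C$ is the image of the coaugmentation $k\to C$, and for $p\ges 0$, $F_pC$ is the preimage in $C$ of $\Ker(\bar\psi^{[p+1]}\col\bar C\to\bar C^{\otimes(p+2)})$, where $\bar\psi^{[p+1]}$ denotes the $(p+1)$-fold reduced coproduct. The hypothesis $\bar C_{\les 1}=0$ forces each tensor factor in the target to have degree $\ges 2$, so $C_n\subseteq F_pC$ once $2(p+2)>n$; the filtration is thus exhaustive and bounded below in each total degree. In case (n) the truncation filtration $F_pC=k\oplus C_{\ges -p-1}$ serves the same purpose, thanks to $\bar C_{\ges 0}=0$. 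Set $F_p(A\rotimes\tau C)=A\otimes F_pC$. The untwisted part of the differential preserves this, and by \eqref{eq:ocap} the twist sends $a\otimes c$ to a sum $\sum\pm\,a\tau(c_i)\otimes c'_i$ over the decomposition $\psi^C(c)=\sum c_i\otimes c'_i$. The contribution $1_C\otimes c$ drops because the degree constraints force $\tau\eta^C=0$ (one has $A_{-1}=0$ in both (p) and (n)), the contribution $c\otimes 1_C$ lands in $A\otimes F_{-1}C\subseteq F_{p-1}$, and each reduced-coproduct term satisfies $c'_i\in F_{p-1}C$ whenever $c\in F_pC$. Hence each $F_p(A\rotimes\tau C)$ is a DG $A$-submodule, the twist vanishes on the associated graded, and
\[
\operatorname{gr}^F(A\rotimes\tau C)\;\cong\;A\otimes\operatorname{gr}^F C
\]
as DG $A$-modules carrying the ordinary tensor differential.

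The maps $A\otimes\gamma$ and $\alpha\otimes C$ preserve this filtration ($\gamma$ respects the conilpotent/truncation filtration as a morphism of coaugmented DG coalgebras; $\alpha$ acts only on the $A$-factor), and on associated gradeds become $A\otimes\operatorname{gr}\gamma$ (resp.\ $\alpha\otimes\operatorname{gr}C$). The ordinary K\"unneth theorem over a field converts $\operatorname{gr}\gamma$ (resp.\ $\alpha$) being a quasi-isomorphism into an isomorphism on the $E^1$-page of the resulting spectral sequences. These converge in both cases — bounded-below filtration in (p), degreewise upper-finite filtration in (n) — so $E^1$-iso delivers quasi-iso of the abutments. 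This establishes (2) and the quasi-iso half of (1). For the homotopy equivalence in (1), observe that $F_p/F_{p-1}\cong A\otimes(F_pC/F_{p-1}C)$ is a coproduct of shifts of the DG $A$-module $A$, which exhibits $A\rotimes\tau C$ and $A\rotimes{\tau'}C'$ as semi-free DG $A$-modules; quasi-isomorphisms of semi-free DG $A$-modules are automatically homotopy equivalences.

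The principal difficulty is the intermediate step ``$\gamma$ a quasi-isomorphism $\Rightarrow\operatorname{gr}\gamma$ a quasi-isomorphism'', which is not a formal consequence of filtered convergence: the naive comparison of spectral sequences runs in the wrong direction. One must exploit the asymmetric positivity hypotheses — $\bar C_{\les 1}=0$ together with $\bar C'_{\les -1}=0$, or their analogues in case (n) — to see that the conilpotent/truncation filtrations on $C$ and $C'$ are matched degreewise, and then, either by induction on conilpotency depth or via a version of the Zeeman comparison theorem, deduce the claim. Once this lemma is in place the rest of the argument is mechanical.
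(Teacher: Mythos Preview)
Your overall architecture---filter, compare spectral sequences, then upgrade to a homotopy equivalence via semifreeness---is exactly the paper's. The divergence, in case (p), is in the choice of filtration, and it lands precisely on the step you yourself flag as unresolved.

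You filter by conilpotency depth. This forces you to prove that $\gamma$ a quasi-isomorphism implies $\operatorname{gr}\gamma$ a quasi-isomorphism, which, as you acknowledge, is not formal; with the asymmetric hypotheses $\ov C_{\les1}=0$ versus only $\ov C{}'_{\les-1}=0$ it is genuinely delicate, and the gesture toward Zeeman comparison or induction on depth is not a proof. So as written the argument for (1) has a gap.

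The paper sidesteps this entirely by filtering by \emph{degree} in the coalgebra factor in case (p) (and by degree in $A$ in case (n), which is essentially your truncation filtration). With $F_p=C'_{\les p}\otimes A$, the degree constraints on $A$ force the twist to strictly lower the filtration, so on the associated graded only the $A$-differential survives: $d^0=\pm\,C'_p\otimes\dd^A$, whence $E^1_{p,q}=C'_p\otimes\HH_q(A)$. Then $d^1=\dd^{C'}_p\otimes\HH_q(A)$, giving $E^2_{p,q}=\HH_p(C')\otimes\HH_q(A)$. The induced map on $E^2$ is now literally $\HH(\gamma)\otimes\HH(A)$, an isomorphism by hypothesis, and the classical comparison theorem finishes. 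No auxiliary lemma about $\operatorname{gr}\gamma$ is needed, because the filtration is chosen so that $\HH(C')$ itself appears on a page of the spectral sequence rather than the homology of some associated graded of $C'$. Replacing your conilpotent filtration by this degree filtration closes the gap with no further work.
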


  \begin{proof}
The arguments for the four assertions are parallel.  We present one of them.

For $c\in C_p$ with $\ov\psi{}^C(c)=\sum_{i} c_i\otimes c'_i$ and $a\in A_q$ 
formulas \eqref{eq:twistXi2} and \eqref{eq:cap} express $\dd^{C\avlotimes{\tau} A}(c\otimes a)$ as a 
sum of the following terms:
  \begin{alignat}{2}
    \label{eq:struc1}
{\phantom{ + }}\sum_{|c_i|=j}(-1)^{|c_i|}c\otimes\tau_j(c'_i)a &\in C_{p-j}\otimes A_{q+j-1}
\quad\text{for}\quad j\ne0,1
  \\
    \label{eq:struc2}
c\otimes\dd^A_q(a) + (-1)^{p}c\otimes\tau_0(1)a &\in C_{p}\otimes A_{q-1}
\\
    \label{eq:struc3}
\dd^C_p(c)\otimes a - \sum_{|c_i|=1}(-1)^{p}c_i\otimes\tau_1(c'_i)a &\in C_{p-1}\otimes A_q
  \end{alignat}

When (p) holds we have $\tau'_{\les1}=0$, so \eqref{eq:struc1} shows that $(C'_{\les p}\otimes A)_{p\in\BZ}$ 
is an increasing filtration of $C'\avlotimes{\tau'}A$ by subcomplexes.  It defines a spectral sequence 
$(\de rpq\col\ee rpq\to\ee r{p-r}{q+r-1})_{r\ges0}$ that lies in the first quadrant and converges to $\HH(C'\avlotimes{\tau'}A)$ from
$\ee 0pq=C'_{p}\otimes A_q$.  Now \eqref{eq:struc2} gives $\de 0pq=C'_p \otimes(-1)^p\dd^A_q$, 
so $\ee 1pq=C'_p\otimes\HH_q(A)$. From \eqref{eq:struc3} we get $\de 1pq=\dd^{C'}_p\otimes\HH_q(A)$, 
so $\ee 2pq=\HH_p(C')\otimes\HH_q(A)$. 

A similar spectral sequence with second page equal to $\HH(C')\otimes\HH(A)$ converges to 
$\HH(C\avlotimes{\tau'\gamma}A)$.  The map $\gamma\otimes A$ induces a morphism from
the latter sequence to the former one.  On the second page it is the isomorphism $\HH(\gamma)\otimes\HH(A)$, 
so $\HH(\gamma\otimes A)$ is bijective by the classical comparison theorem for spectral sequences.

By Lemma \ref{lem:semifree} below, both $C\avlotimes{\tau'\gamma}A$ and $C'\otimes A$ are semifree
over $A$, so $\gamma\otimes A$ is a homotopy equivalence of right DG $A$-modules; see \ref{ch:DGMhp}.
 
When (n) holds we have $\tau'_{\ges0}=0$, so $(C\otimes(A_{\les q}))_{q\in\BZ}$ is a decreasing filtration by 
subcomplexes.  Its spectral sequence $(\dec rpq\col\eec rpq\to\eec r{p+r}{q-r+1})_{r\ges0}$ converges to 
$\HH(C\avlotimes{\tau}A)$ from $\eec 0pq=C_{-q}\otimes A_{-p}$.  The proof then concludes as above.
  \end{proof}

The next result introduces into the picture semifree DG modules; see \ref{ch:DGMhp}.

 \begin{lemma}
      \label{lem:semifree}
Assume that either $A_{\les-1}=0$, or $A$ is augmented and $\ov A_{\ges-1}=0$.

When $M$, respectively, $N$ is adequate for $A$, it is semifree if and only if its underlying graded 
$A\nat$-module is free.
    \end{lemma}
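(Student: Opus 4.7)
The forward implication is immediate from the definition of semifree given in \ref{ch:DGMhp}: an exhaustive filtration by DG submodules whose successive quotients are free DG modules on generators with zero differential decomposes, on forgetting the differential, into a direct sum of free graded $A\nat$-modules, so $M\nat$ is free.

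For the converse, fix a homogeneous $A\nat$-basis $\{e_i\}_{i\in I}$ of $M\nat$, with $|e_i|=d_i$, and write $\dd^M(e_i)=\sum_j a_{ij}e_j$ with $a_{ij}\in A$ of degree $d_i-d_j-1$. The two hypotheses on $A$ restrict which $(i,j)$ allow $a_{ij}\ne 0$, and the plan is to exploit this to exhibit a semifree filtration. In the case $A_{\les-1}=0$ the constraint forces $d_j\les d_i-1$ whenever $a_{ij}\ne 0$. Then $M(n):=\sum_{d_i\les n}Ae_i$ is an increasing filtration by DG submodules since $\dd^M(e_i)\in M(d_i-1)$; adequacy of $M$ in the sense of \ref{ch:DGMhp} provides the lower bound on the $d_i$ needed for exhaustiveness, and each quotient $M(n)/M(n-1)$ is free on the residues of the $e_i$ with $d_i=n$ whose induced differential vanishes. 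This is precisely the semifree structure.

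In the case $\ov A_{\ges-1}=0$, with $A=k\oplus\ov A$ and $\ov A$ concentrated in degrees $\les-2$, each nonzero $a_{ij}$ is either a scalar (forcing $d_j=d_i-1$) or lies in $\ov A$ (forcing $d_j\ges d_i+1$). The scalar part behaves as before, but the $\ov A$-part pushes into strictly higher basis degrees, so the naive filtration by $d_i$ is not $\dd$-stable. The remedy is to interleave the $d_i$-filtration with the $\ov A$-adic filtration $F^kM=\ov A^kM$, which is compatible with $\dd$ because the augmentation ideal is a DG ideal of $A$. Using the dual side of the adequacy hypothesis (an upper bound on the basis degrees, mirroring the bound imposed on $A$), one shows the hybrid filtration is an increasing exhaustive DG submodule filtration whose successive quotients are free on $\dd$-trivial generators.

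The main obstacle is the non-positive case: unlike the non-negative situation, $\dd(e_i)$ can have components of both strictly lower and strictly higher basis degree, so a monotone filtration by $d_i$ alone cannot be $\dd$-stable. One must combine the grading filtration with the $\ov A$-adic one, and verifying that the combined filtration exhausts $M$ is exactly where the adequacy hypothesis is indispensable.
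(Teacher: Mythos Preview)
Your argument for the case $A_{\les-1}=0$ is correct and matches the paper's, with one minor slip: adequacy is needed not for exhaustiveness---each $e_i$ already lies in $M(d_i)$---but for the condition $M(n)=0$ when $n\ll0$.

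The second case has a genuine gap. You correctly diagnose the obstacle: the scalar component of $\dd^M(e_i)$ lands in basis degree $d_i-1$ while the $\ov A$-component lands in degrees $\ges d_i+1$, so neither monotone degree filtration is $\dd$-stable. But your proposed remedy---interleaving with the $\ov A$-adic filtration $\ov A{}^{k}M$---cannot work as stated. The subquotients of the $\ov A$-adic filtration are annihilated by $\ov A$, hence are modules over $A/\ov A=k$ rather than free $A$-modules; no hybrid with the degree filtration repairs this, and you never actually write down what the hybrid filtration is or check any of its properties. Your closing remark that ``verifying exhaustion is where adequacy is indispensable'' misidentifies the difficulty: the hard part is producing successive quotients that are \emph{free over $A$} with zero induced differential on generators, and the $\ov A$-adic filtration is simply the wrong tool for that.

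The paper exploits a different observation. Writing $\dd^N(v\otimes1)=v'\otimes1+(\text{terms with coefficients in }\ov A)$ defines a $k$-linear map $v\mapsto v'$ of degree $-1$ on the basis space $V$. Reading $(\dd^N)^2=0$ modulo $\ov A$ shows that this scalar part squares to zero: the image of $V_j\to V_{j-1}$ lies in $V'_{j-1}:=\{w\in V_{j-1}\mid \dd^N(w\otimes1)\in V_{>j-1}\otimes A\}$. One then checks directly that each $(V_{\ges p}\oplus V'_{p-1})\otimes A$ is a DG submodule, and refining the degree filtration by these kernels yields the semifree filtration; adequacy ($V_{\gg0}=0$) guarantees it starts at zero. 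This $\delta^2=0$ mechanism is the missing ingredient in your sketch.
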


  \begin{proof} 
``Only if'' is evident.  For the converse we present the arguments for right modules.
We may write $N\nat$ as $V\otimes A\nat$ for some $k$-vector space $V$ that is adequate for $A$.  
By the Leibniz rule, $\dd^N$ is determined by its restriction on $V\otimes k$.

When $A_{\les-1}=0$ the graded submodules $F^p:=V_{\les p}\otimes A$ are subcomplexes for
degree reasons, and $F^{p}/F^{p-1}\cong(\shift^pV^p)\otimes A$ holds as right DG $A$-modules.

Assume now than $A$ is augmented and $\ov A_{\ges-1}=0$.  For $v\in V_j$ we then have
  \[
\dd^N(v\otimes 1)=v'\otimes 1+\sum_{|v_i|\ges j+1}v_i\otimes a_i  
\quad\text{with}\quad v'\in V_{j-1}
  \]
whence $\dd^N(v'\otimes1)\in V_{\ges j}\otimes A$.   It follows that the sequence of inclusions
  \[
\cdots\subseteq(V_{\ges p+1}\oplus\shift^{p}V'_{p})\otimes A \subseteq
V_{\ges p}\otimes A \subseteq
(V_{\ges p}\oplus\shift^{p-1}V'_{p-1})\otimes A \subseteq\cdots
  \]
where $V'_j=\{v\in V_j \mid\dd^N(v\otimes1)\in V_{>j}\otimes A\}$ is a semifree filtration of $N$.
 \end{proof}

\section{Acyclic twisting maps}
  \label{AcyclicTwistingMaps}

Here we discuss those twisting maps that are most important for applications.

  \begin{notation}
    \label{ch:notation4}
The notation from \ref{ch:notation1} stays in force, with the following additions.

We assume that $\varepsilon^A\col A\to k$ is an augmented DG algebra and that $\eta^C\col k\to C$ 
is a coaugmented DG coalgebra, see \ref{ch:AugC} and \ref{ch:AugA}, respectively; thus, $k$ is a left and right
DG $A$-module and a left and right DG $C$-comodule.  

We let $\tau\col C\to A$ denote a twisting map, see \ref{ch:notation2}, such that $\varepsilon^A\tau=0=\tau\eta^C$.  
  \end{notation}

   \begin{bfchunk}{Comparison maps.}\!
      \label{ch:comparison}
The units and counts of the adjunctions in \ref{ch:twistAdj} give maps
 \begin{align}
    \label{eq:quismAugACM}
\varepsilon^{ACM}&\col 
A\rotimes{\tau}C\avlotimes{\tau} M
\xra{A\otimes\varepsilon^{C}\otimes M}
A\otimes k\otimes M
=A\otimes M
\xra{\,\varphi^{AM}\,}
M
  \\
    \label{eq:quismAugCAX}
\eta^{CAX}&\col 
X\xra{\,\psi^{CX}\,}C\otimes X=C\otimes k\otimes X
\xra{C\otimes\eta^{A}\otimes X}C\avlotimes{\tau} A\rotimes{\tau} X
  \\
    \label{eq:quismAugNCA}
\varepsilon^{NCA}&\col 
N\rotimes{\tau}C\avlotimes{\tau} A
\xra{N\otimes\varepsilon^{C}\otimes A}
N\otimes k\otimes A
=N\otimes A
\xra{\,\varphi^{NA}\,}
N
  \\
    \label{eq:quismAugYAC}
\eta^{YAC}&\col 
Y\xra{\,\psi^{YC}\,}Y\otimes C=Y\otimes k\otimes C
\xra{Y\otimes\eta^{A}\otimes C}Y\avlotimes{\tau} A\rotimes{\tau} C
  \end{align}
which---in the order displayed---are morphisms of left DG $A$-modules, left DG $C$-comodules, 
right DG $A$-modules, and rigt DG $C$-comodules, respectively.
   \end{bfchunk}

   \begin{bfchunk}{Acyclicity.}
      \label{ch:acyclicity}
The twisting map $\tau$ is said to be \emph{acyclic} if the map
\begin{alignat}{2}
    \label{eq:acyclicity1}
&\varepsilon^{ACA} 
  \quad\text{given by}\quad
\varepsilon^{ACA}(a\otimes c\otimes a')=\varepsilon^C(c)aa'
   \end{alignat}
is a quasi-isomorphism. 
     \end{bfchunk}

Classical constructions provide \emph{universal} acyclic twisting maps.  The concepts 
used in their descriptions are defined in \ref{ch:tena} and \ref{ch:tenc}.  Detailed presentations 
abound; see e.g.\ \cite[II]{HMS}, \cite[Ch.\ 19]{FHT2}, \cite[Ch.\ 10]{Ne}, or \cite[Ch.\ 1 and 2]{LV}.

\begin{bfchunk}{Bar construction.}
    \label{ch:Bar}
When $\varepsilon^A$ is an augmented DG algebra the \emph{bar construction} $\babar A$ 
is the coaugmented DG coalgebra, described as follows.
  \begin{enumerate}[\rm(1)]
 \item
The underlying coalgebra $\babar A\nat$ is the tensor coalgebra $\ten c{\shift{\ov A}}\nat$; the tradition is to write
$[a_1|\cdots| a_p]$ for $(\susp{a_1})\otimes\cdots\otimes(\susp{a_p})$ and $1$ for $[\ ]$ in $V^{\otimes0}$.
  \item
The differential $\dd^{\babar A}$ is the unique coderivation of $\ten cV$ with $\pi\dd^{\babar A}(V^{\otimes p})=0$ 
for $p\ne 1,2$, $\pi\dd^{\babar A}([a_1|a_2])=(-1)^{|a_1|}[a_1a_2]$, and $\pi\dd^{\babar A}([a])=-[\dd^A(a)]$.
    \end{enumerate}

The bar construction has the following properties.
  \begin{enumerate}[\rm(1)]
 \item[\rm(3)]
The map $\tau^A\col\babar A\to A$ with $\tau^A([a])=a$ and $\tau^A(V^{\otimes p})=0$ for $p\ne1$ is twisting.
  \item[\rm(4)]
Both $\varepsilon^{A(\babar A)M}$  from \eqref{eq:quismAugACM} and $\varepsilon^{N(\babar A)A}$ from \eqref{eq:quismAugNCA} are quasi-isomorphisms.
  \item[\rm(5)]
When $C$ is cocomplete, see \ref{ch:AugC}, there is a unique morphism of DG coalgebras $\gamma^\tau\col C\to\babar A$ 
satisfying $\tau=\tau^A\gamma^\tau$.
    \end{enumerate}
    \end{bfchunk}

\begin{bfchunk}{Cobar construction.}
    \label{ch:Cobar}
For a coaugmented DG coalgebra $\eta^C$ the \emph{cobar construction} $\cobar C$ is 
the augmented DG algebra described as follows.
  \begin{enumerate}[\rm(1)]
 \item
The algebra $\cobar C\nat$ is the tensor algebra $\ten a{\shift^{-1}{\ov C}}\nat$; the tradition is to write
 $[c_1|\cdots| c_p]$ for $(\susp^{-1}{c_1})\otimes\cdots\otimes(\susp^{-1}{c_p})$ and $1$ for $[\ ]$ in $V^{\otimes0}$.
  \item
The differential $\dd^{\cobar C}$ is the unique derivation of $\cobar C$ satisfying the condition
$\dd^{\cobar C}([c])=\sum_i(-1)^{|c_i|}[c_i|c'_{i}]-[\dd^C(c)]$, where $\ov{\psi}{}^C(c)=\sum_{i}c_{i}\otimes c'_{i}$.
 \end{enumerate}

The cobar construction has the following properties.
  \begin{enumerate}[\rm(1)]
 \item[\rm(3)]
The map $\tau^C\col C\to \cobar C$ with $\tau^C(c)=[c-\eta^C\varepsilon^C(c)]$ is twisting.
 \item[\rm(4)]
Both $\eta^{C(\cobar C)X}$ from \eqref{eq:quismAugCAX} $\eta^{Y(\cobar C)C}$ from \eqref{eq:quismAugYAC} are quasi-isomorphisms.
 \item[\rm(5)]
There is a unique morphism of DG algebras $\alpha^{\tau}\col \cobar C\to A$ satisfying $\tau=\alpha^{\tau}\tau^C$.
 \end{enumerate}
    \end{bfchunk}

  \begin{bfchunk}{(Co)augmentations.}
      \label{ch:Aug}
The morphisms $\varepsilon^A$ and $\eta^C$ turn $k$ into a left and right DG $A$-module and into a left and right 
DG $C$-comodule, respectively.  We have
  \begin{alignat}{3}
    \label{eq:quismAugAkkC}
Y\avlotimes{\tau}k&=Y \quad k\avlotimes{\tau}M=M
  &\quad&\text{and}\quad
&k\rotimes{\tau}X&=X \quad N\rotimes{\tau}k= N  
\intertext{with first equality from \eqref{eq:twistXi2} and \eqref{eq:cap}, etc.  Thus, there are equalities}
    \label{eq:quismAugAC}
\varepsilon^{A}\otimes\varepsilon^{C}&=\varepsilon^{ACk}\col A\rotimes{\tau}C\to k
  &\ &\text{and}\ 
&\varepsilon^{C}\otimes\varepsilon^{A}&=\varepsilon^{kCA}\col C\avlotimes{\tau}A\to k
  \\
    \label{eq:quismAugCA}
\eta^{A}\otimes\eta^{C}&=\eta^{ACk}\col k\to A\rotimes{\tau}C
  &\quad&\text{and}\quad
&\eta^{C}\otimes\eta^{A}&=\eta^{kCA}\col k\to C\avlotimes{\tau}A
  \end{alignat}
of morphisms of DG $A$-modules in \eqref{eq:quismAugAC} and of DG $C$-comodules in \eqref{eq:quismAugCA}.
        \end{bfchunk}

Most applications of twisting maps are based on the following criterion.

    \begin{theorem}
    \label{thm:quismAcy}
Let $A$ be an augmented DG algebra and $C$ a coaugmented DG  coalgebra such that 
$\ov A:=\Ker(\varepsilon^A)$ and $\ov C:=\Ker(\varepsilon^C)$ satisfy the conditions
  \[
\text{\rm(p)}\quad
\ov A_{\les0}=0=\ov C_{\les1} 
\qquad\text{respectively}\qquad
\text{\rm(n)}\quad
\ov A_{\ges-1}=0=\ov C_{\ges0}
  \]
  
A twisting map $\tau\col C\to A$ is acyclic if (respectively, only if) the maps described in one
(respectively, in all) of the following items are quasi-isomorphisms.
 \begin{enumerate}[\quad\rm(i)]
    \item
$\varepsilon^{ACM}\col A\rotimes{\tau}C\avlotimes{\tau}M\to M$ from \eqref{eq:quismAugACM} 
for all left DG modules~$M$.
    \item
$\varepsilon^{AC}\col A\rotimes{\tau}C\to k$ from \eqref{eq:quismAugAC}.
    \item
$\gamma^\tau\col C\to\babar A$ from \emph{\ref{ch:Bar}(5)} 
($\gamma^\tau$ exists because $C$ is cocomplete; see \emph{\ref{ch:AugC}}).
    \item[\rm(iv)]
$\eta^{CAX}\col X\to C\avlotimes{\tau}A\rotimes{\tau}X$ from \eqref{eq:quismAugCAX} 
for all left DG comodules~$X.{}$
    \item[\rm(v)]
$\eta^{CA}\col k\to C\avlotimes{\tau}A$ from \eqref{eq:quismAugCA}.
    \item[\rm(vi)]
$\alpha^{\tau}\col \cobar C\to A$ from \emph{\ref{ch:Cobar}(5)}.
  \end{enumerate}
  \end{theorem}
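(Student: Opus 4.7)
The plan is to show that each of (i)--(vi) is equivalent to the acyclicity of $\tau$, using acyclicity as the pivot: (i), (ii), (iii) are handled via the bar construction and (iv), (v), (vi) via the cobar construction.  Under the bound conditions (p) or (n), Lemma~\ref{lem:semifree} applies to show that $A\rotimes{\tau}C\avlotimes{\tau}A$ and the variants $A\rotimes{\tau}C$, $C\avlotimes{\tau}A$, $A\rotimes{\tau}C\avlotimes{\tau}M$, $C\avlotimes{\tau}A\rotimes{\tau}X$ are semifree DG $A$-modules on the appropriate side, in particular $\mathsf{K}$-projective.  Combined with $\varepsilon^A\tau=0$ (which kills the twist cross-terms under tensoring with $k$ over $A$), this yields the key identifications $\varepsilon^{ACM}=\varepsilon^{ACA}\otimes_A M$, $\varepsilon^{AC}=\varepsilon^{ACA}\otimes_A k$, and $\varepsilon^{kCA}=k\otimes_A\varepsilon^{ACA}$.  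From these, acyclicity immediately implies (i) (tensoring of a quasi-isomorphism between $\mathsf{K}$-projective modules) and (ii) by specialization; it also implies that $\varepsilon^{kCA}$ is a quasi-isomorphism, which via the retraction $\varepsilon^{kCA}\eta^{kCA}=\id_k$ is equivalent to (v).

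For the converses (ii) $\Rightarrow$ acyclic and (v) $\Rightarrow$ acyclic, I would prove a Nakayama-type detection principle: under (p) or (n), a morphism of semifree DG $A$-modules is a quasi-isomorphism iff it remains one after $-\otimes_A k$ (or $k\otimes_A-$), proven via the spectral sequence of the semifree filtration, whose convergence is secured precisely by the bounds.  For (ii) $\Leftrightarrow$ (iii): given (ii), both $A\rotimes{\tau}C\to k$ and $A\rotimes{\tau^A}\babar{A}\to k$ (the latter a quasi-isomorphism by \ref{ch:Bar}(4) with $M=k$) are semifree resolutions of $k$ over $A$.  The $\gamma^\tau$-equivariant map $A\otimes\gamma^\tau$ from \ref{ch:funTensor} is a morphism of such resolutions, hence a homotopy equivalence, and $k\otimes_A(A\otimes\gamma^\tau)=\gamma^\tau$ is therefore also a homotopy equivalence.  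The converse (iii) $\Rightarrow$ (ii) follows from Proposition~\ref{prop:ACCA}(1), which gives $A\otimes\gamma^\tau$ a homotopy equivalence, composed with the quasi-isomorphism $\varepsilon^{A\babar{A}}$.  Dually, for (v) $\Leftrightarrow$ (vi): (vi) $\Rightarrow$ (v) uses Proposition~\ref{prop:ACCA}(2) and composition with the quasi-isomorphism $\eta^{kC\cobar{C}}$ from \ref{ch:Cobar}(4); the converse (v) $\Rightarrow$ (vi) compares the cosemifree resolutions $k\to C\avlotimes{\tau^C}\cobar{C}$ and $k\to C\avlotimes{\tau}A$ of $k$ as left DG $C$-comodules via $C\otimes\alpha^\tau$ and then cotensors with $k$ over $C$ to recover $\alpha^\tau$.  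Finally (iv) $\Leftrightarrow$ (v) is immediate: $X=k$ handles one direction, and the factorization $\eta^{CAX}=(C\otimes\alpha^\tau\otimes X)\circ\eta^{C\cobar{C}X}$ together with \ref{ch:Cobar}(4) and Proposition~\ref{prop:ACCA}(2) handles the other once (vi) is known.

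The main obstacle is the Nakayama-type detection principle and its comodule dual: promoting a quasi-isomorphism after $k\otimes_A-$ or $-\otimes_A k$ back to a quasi-isomorphism of semifree DG $A$-modules (respectively left DG $C$-comodules).  This is where the bounds (p) or (n) are essentially used, to secure convergence of the spectral sequences on the (co)semifree filtrations.  Once these detection principles are in hand, all remaining implications are routine and depend on Proposition~\ref{prop:ACCA}, the universal acyclicity statements \ref{ch:Bar}(4) and \ref{ch:Cobar}(4), and the uniqueness of (co)semifree resolutions up to homotopy.
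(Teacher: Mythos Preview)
Your overall strategy is correct and shares its skeleton with the paper's proof: both use semifreeness (Lemma~\ref{lem:semifree}) to get acyclic $\Rightarrow$ (i) $\Rightarrow$ (ii), and both prove (ii) $\Leftrightarrow$ (iii) by comparing the semifree resolutions $A\rotimes{\tau}C\to k$ and $A\rotimes{\tau^A}\babar A\to k$ and then applying $k\otimes_A-$. The genuine difference is in how you close the loop back to acyclicity. You propose (ii) $\Rightarrow$ acyclic directly via a Nakayama-type detection principle (a morphism of adequate semifree DG $A$-modules is a quasi-isomorphism iff it is one after $-\otimes_Ak$), which is true under (p) or (n) but is an additional lemma you must prove. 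The paper instead closes the loop as (iii) $\Rightarrow$ acyclic: from $\gamma^\tau$ a quasi-isomorphism, Proposition~\ref{prop:ACCA} gives that $A\otimes\gamma^\tau\otimes A\colon A\rotimes{\tau}C\avlotimes{\tau}A\to A\rotimes{\tau^A}(\babar A)\avlotimes{\tau^A}A$ is a quasi-isomorphism, and then one composes with $\varepsilon^{A(\babar A)A}$, already known to be a quasi-isomorphism by~\ref{ch:Bar}(4). This route needs no Nakayama principle at all; everything reduces to Proposition~\ref{prop:ACCA} and the universal acyclicity of $\tau^A$. Since you already have (ii) $\Rightarrow$ (iii) in hand, you could simply adopt this step and drop the detection lemma.

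A second point: your treatment of the comodule side (v) $\Rightarrow$ (vi) invokes ``cosemifree resolutions'' and cotensor products over $C$. These work in principle, but the paper deliberately avoids cotensor products and their derived functors (see the Notes at the end of Section~\ref{AcyclicTwistingMaps}), and nothing of the sort is set up in the text. The paper's intended argument for the second cycle acyclic $\Rightarrow$ (iv) $\Rightarrow$ (v) $\Rightarrow$ (vi) $\Rightarrow$ acyclic is formally parallel to the first, with $\cobar C$, $\tau^C$, $\alpha^\tau$, and~\ref{ch:Cobar}(4) playing the roles of $\babar A$, $\tau^A$, $\gamma^\tau$, and~\ref{ch:Bar}(4); in particular (vi) $\Rightarrow$ acyclic goes through Proposition~\ref{prop:ACCA}(2) and the known acyclicity of $\tau^C$, again bypassing any detection principle on the comodule side. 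Your factorization for (iv) is fine once (vi) is known, but you should replace the cotensor argument for (v) $\Rightarrow$ (vi) by the direct analogue of your own (ii) $\Rightarrow$ (iii): both $C\avlotimes{\tau^C}\cobar C$ and $C\avlotimes{\tau}A$ are semifree right modules over $\cobar C$ and $A$ respectively, but more to the point, both are left DG $A$-module resolutions after the appropriate identifications, and the comparison can be run via Proposition~\ref{prop:ACCA} rather than comodule homological algebra.
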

 
  \begin{proof}
Here we use the notation (y)$\implies$(z) as shorthand for the assertion: 

``If the map in (y) is a quasi-isomorphism, then so is the map in (z).''

\eqref{eq:acyclicity1}$\implies$(i).
Since $A\rotimes{\tau}C\avlotimes{\tau}A$ is semifree as a right 
DG $A$-module by Lemma \ref{lem:semifree}, and $\varepsilon^{ACA}$ is a morphism of 
right DG $A$-modules, it is a homotopy equivalence.  The following commutative diagram
then shows that $\varepsilon^{ACM}$ is a quasi-isomorphism:
   \[
\xymatrixcolsep{4pc}
\xymatrixrowsep{2pc} 
\xymatrix{
(A\rotimes{\tau}C\avlotimes{\tau}A)\otimes_A M
\ar@{->}[r]^-{\varepsilon^{ACA}\otimes_AM}_-{\simeq}
\ar@{->}[d]^-{\cong}
&A\otimes_AM
\ar@{->}[d]_-{\cong}
\\
A\rotimes{\tau}C\avlotimes{\tau}M
\ar@{->}[r]^-{\varepsilon^{ACM}}
&M
}
  \]

(i)$\implies$(ii).
The map $\varepsilon^{AC}$ is a quasi-isomorphism as it equals $\varepsilon^{ACk}$ by \eqref{eq:quismAugAC}.

(ii)$\implies$(iii).
As $\HH(A\rotimes{\tau}C)\cong k$ holds by hypothesis and $\HH(A\rotimes{\tau}\babar A)\cong k$ by 
\ref{ch:Bar}(4), the morphism $A\otimes\gamma^\tau\col A\rotimes{\tau}C\to A\rotimes{\tau}\babar A$
is a quasi-isomorphism.  Its source and target are semifree by Lemma \ref{lem:semifree}, so it yields the 
quasi-isomorphism in the following commutative diagram, where the equalities come from \eqref{eq:quismAugAkkC}:
   \[
\xymatrixcolsep{2pc}
\xymatrixrowsep{2pc} 
\xymatrix{
k\otimes_A(A\rotimes{\tau}C)
\ar@{->}[d]_-{k\otimes_A(A\otimes\gamma^\tau)}^-{\simeq}
\ar@{->}[r]^-{\cong}
&k\rotimes{\tau}C
\ar@{->}[d]_{k\otimes\gamma^\tau}
\ar@{=}[r]
&C
\ar@{->}[d]_-{\gamma^\tau}
\\
k\otimes_A(A\rotimes{\tau^A}\babar A)
\ar@{->}[r]^-{\cong}
&k\rotimes{\tau^A}\babar A
\ar@{=}[r]
&\babar A
}
  \]

(iii)$\implies$\eqref{eq:acyclicity1}.
We have $\tau=\tau^A\gamma^\tau$ by \ref{ch:Bar}(5).  Proposition \ref{prop:ACCA}(1) shows that the map
$A\otimes\gamma^\tau\col A\rotimes{\tau}C\to A\rotimes{\tau^A}\babar A$ is a quasi-isomorphism,
and the Proposition \ref{prop:ACCA}(2) shows that so is 
$A\otimes\gamma^\tau\otimes A\col A\rotimes{\tau}C\avlotimes{\tau}A\to A\rotimes{\tau^A}(\babar A)\avlotimes{\tau^A}A$.
Since $\varepsilon^{ACA}=\varepsilon^{A(\babar A)A}(A\otimes\gamma^\tau\otimes A)$
holds by \ref{ch:Bar}(5) and $\varepsilon^{A(\babar A)A}$ is a quasi-isomorphism by \ref{ch:Bar}(4),
we see that $\varepsilon^{ACA}$ is a quasi-isomorphism.

\eqref{eq:acyclicity1}$\implies$(iv)$\implies$(v)$\implies$(vi)$\implies$\eqref{eq:acyclicity1}
are proved by similar arguments.
  \end{proof}

   \begin{corollary}
    \label{cor:bar-cobar}
For each augmented DG algebra $A$ with $\ov A_{\les0}=0$ or $\ov A_{\ges-1}=0$ there is a canonical surjective 
quasi-isomorphism of DG algebras $\alpha^{A}\col\cobar{\babar A}\to A$.

For each coaugmented DG coalgebra $C$ with $\ov C_{\les1}$ or $\ov C_{\ges0}=0$ there is a canonical injective 
quasi-isomorphism of DG coalgebras $\gamma^{C}\col C\to\babar{\cobar C}$.
    \end{corollary}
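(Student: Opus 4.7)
The plan is to deduce both assertions from Theorem \ref{thm:quismAcy} applied to the universal twisting maps $\tau^A\col\babar A\to A$ of \ref{ch:Bar}(3) and $\tau^C\col C\to\cobar C$ of \ref{ch:Cobar}(3), combined with the universal properties \ref{ch:Bar}(5) and \ref{ch:Cobar}(5), which produce the canonical morphisms from any twisting map. Set $\alpha^A:=\alpha^{\tau^A}$ and $\gamma^C:=\gamma^{\tau^C}$; the task is to show that these are quasi-isomorphisms and to read off surjectivity and injectivity from their definitions.

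First I would check that the connectivity hypotheses propagate. If $\ov A_{\les0}=0$, then $\shift\ov A$ is concentrated in degrees $\ges2$, so every positive tensor power is too, whence $\ov{\babar A}_{\les1}=0$; this is condition (p) for the pair $(A,\babar A)$ and the twisting map $\tau^A$. Similarly $\ov A_{\ges-1}=0$ yields $\ov{\babar A}_{\ges0}=0$, giving (n). The symmetric bookkeeping for $\cobar C$, using the tensor algebra on $\shift^{-1}\ov C$, produces $\ov{\cobar C}_{\les0}=0$ or $\ov{\cobar C}_{\ges-1}=0$ from the hypotheses on $C$; these same bounds also guarantee cocompleteness of $C$, so that \ref{ch:Bar}(5) is applicable and $\gamma^C$ exists.

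Next I would note that the universal twisting maps are acyclic. For $\tau^A$ this is immediate: \ref{ch:Bar}(4), specialized to $M=N=A$, says that $\varepsilon^{A(\babar A)A}$ is a quasi-isomorphism, which is the definition \ref{ch:acyclicity}. For $\tau^C$, the statement \ref{ch:Cobar}(4) supplies condition (iv) of Theorem \ref{thm:quismAcy} for every $X$, and the theorem then yields acyclicity. With acyclicity established, implication \eqref{eq:acyclicity1}$\Rightarrow$(vi) of Theorem \ref{thm:quismAcy} produces the quasi-isomorphism $\alpha^A\col\cobar{\babar A}\to A$, and \eqref{eq:acyclicity1}$\Rightarrow$(iii) produces the quasi-isomorphism $\gamma^C\col C\to\babar{\cobar C}$.

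It remains to verify the extremal properties. Surjectivity of $\alpha^A$ follows by tracing \ref{ch:Cobar}(5): the length-one generators $[[a]]\in\cobar{\babar A}$ (one cobar bracket around a single bar bracket) satisfy $\alpha^A([[a]])=\tau^A([a])=a$, so the image covers $\ov A$, while $1\mapsto 1$ from the algebra morphism structure. Injectivity of $\gamma^C$ is dual: composing $\gamma^C$ with the projection $\babar{\cobar C}\to\shift\ov{\cobar C}$ onto the tensor-coalgebra cogenerators equals $\shift\tau^C$ by the cofreeness used in \ref{ch:Bar}(5), and on $\ov C$ this is the tautological inclusion $\ov C=\shift\shift^{-1}\ov C\hookrightarrow\shift\ov{\cobar C}$, forcing $\gamma^C$ itself to be injective. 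The main (and really only) non-formal point is ensuring that the appropriate input to Theorem \ref{thm:quismAcy} is available in each direction; the bar and cobar constructions were built for exactly this purpose via \ref{ch:Bar}(4) and \ref{ch:Cobar}(4), and the bound verifications above confirm that the theorem is applicable.
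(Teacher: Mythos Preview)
Your proposal is correct and follows exactly the approach the paper intends: define $\alpha^A:=\alpha^{\tau^A}$ and $\gamma^C:=\gamma^{\tau^C}$, then invoke Theorem~\ref{thm:quismAcy} on the universal twisting maps, whose acyclicity is supplied by \ref{ch:Bar}(4) and \ref{ch:Cobar}(4). The paper's own proof is a terse two-liner that leaves the connectivity checks, the acyclicity verification, and the surjectivity/injectivity arguments implicit; you have simply spelled these out.
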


  \begin{proof}
Given $A$, set $\alpha^A:=\alpha^{\tau^A}$.  Given $C$, set $\gamma^C:=\gamma^{\tau^C}$.
  \end{proof}

 \begin{bfchunk}{Natural resolutions.}
    \label{ch:naturalRes}
For every left DG module $M$ that is adequate for $A$ Theorem \ref{thm:quismAcy}  yields a 
natural quasi-isomorphism $\varepsilon^{ACM}\col A\rotimes{\tau}C\avlotimes{\tau}M\to M$.  
Since $A\rotimes{\tau}C\avlotimes{\tau}M$ is semifree by Lemma \ref{lem:semifree}, 
this is a \emph{functorial} semifree resolution.
  \end{bfchunk}

The universal constructions are natural and often preserve quasi-isomorphisms:

  \begin{bfchunk}{Naturality.}
  \label{ch:BarCobarNat}
When $\alpha\col A'\to A$ is a morphism of augmented DG algebras the assignment 
$[a'_1|\cdots|a'_p]\mapsto[\alpha(a'_1)|\cdots|\alpha(a'_p)]$ defines a morphism 
$\babar{\alpha}\col\babar{A'}\to\babar A$ of DG coalgebras, which is a quasi-isomorphism if $\alpha$ is one.

When $\gamma\col C\to C'$ is a morphism of coaugmented DG coalgebras the assignment 
$[c_1|\cdots|c_p]\mapsto[\gamma(c_1)|\cdots|\gamma(c_p)]$ 
defines a morphism $\cobar{\gamma}\col\cobar{C}\to\cobar C'$ of DG algebras; it is a quasi-isomorphism 
if $\gamma$ is one and $\ov C_{\les1}=0=\ov C{}'_{\les1}$ or $\ov C_{\ges0}=0=\ov C{}'_{\ges0}$

Indeed, the relevant (co)multiplicative properties follow directly from the definitions.  Filtering the 
corresponding constructions ``by the number of bars'' yields a morphism of spectral sequences 
with $\ee 2pq(\alpha)=(\HH(\alpha)^{\otimes p})_{p+q}$, respectively, 
$\ee 2pq(\gamma)=(\HH(\gamma)^{\otimes p})_{p+q}$.  When $\HH(\alpha)$ is an
isomorphism so is $\ee 2pq(\alpha)$, and then $\HH(\babar{\alpha})$ is one
because the filtrations in use are bounded below, ascending and exhaustive.  If $\gamma$ is a 
quasi-isomorphism, then so is $\ten a{\HH(\gamma)}$.  In this case the filtrations are bounded above,
descending and separated; the additional hypotheses on $C$ and $C'$ guarantee that the spectral 
sequences converge, so $\HH(\cobar\gamma)$ is an isomorphism. 
   \end{bfchunk}

  \begin{Notes}
The material in this section is classical.  Much of it goes back to the foundational papers 
of Gugenheim and Munkholm \cite{GM} and Husemoller, Moore, and Stasheff \cite{HMS}.  In
the first one algebras and coalgebras satisfy either condition (p) of Theorem \ref{thm:quismAcy},
or a condition weaker than (n).  The second one deals only with non-negatively graded objects 
under a hypothesis weaker than (p); this work is presented in detail in the recent book of 
Neisendorfer~\cite{Ne}.  The restrictions adopted here allow us to avoid the use of cotensor 
products and their derived functors.  
   \end{Notes}

  \section{Duals of DG coalgebras}
  \label{Duals of DG coalgebras}

Up to this point the treatment has been unbiased towards DG algebras or DG coalgebras.  
Now we start using vector space duality and this breaks the symmetry: Coalgebras 
and comodules turn into algebras and modules with actions on the same side, but 
restrictions on vector space dimensions are needed to go the other way.

First we look at situations that do not involve duals of algebras or modules.

   \begin{notation}
    \label{ch:notation5}
The notation in \ref{ch:notation1} is in force and $\mathrm{?}^*$ denotes vector space duality.
  \end{notation}

  \begin{bfchunk}{Duality.}
  \label{ch:dualActions}
Set $\eta^{C^*}=\varepsilon^C$ and, using $\varpi^{CC}$ from \eqref{eq:cx2}, form the composed map
  \[
\varphi^{C^*}\col C^*\otimes C^*\xra{\,\varpi^{CC}\,}(C\otimes C)^*\xra{\,(\psi^C)^*\,}C^*
  \]
These morphisms give $C^*$ a structure of DG algebra structure on $C^*$, and the maps 
  \begin{alignat*}{2}
\varphi^{C^*X^*}&\col C^*\otimes X^*\xra{\varpi^{CX}}(C\otimes X)^*\xra{(\psi^{CX})^*}X^*
  \\
\varphi^{Y^*C^*}&\col Y^*\otimes C^*\xra{\varpi^{YC}}(Y\otimes C)^*\xra{(\psi^{YC})^*}Y^*
   \end{alignat*}
turn $X^*$ into a left DG $C^*$-module and $Y^*$ into a right DG $C^*$-module.  

Indeed, the cup products from \ref{ch:Xi} with $A=k$ turns $C^*$ into a DG algebra $\Xi_{Ck}$ 
and $X^*$ into a left DG module over it.  Comparing definitions one sees that the corresponding 
identity maps are compatible with these structures; we identify the DG algebras $C^*$ and 
$\Xi_{Ck}$ and their left DG modules $X^*$ and $\Hom(X,k)$.  

On the other hand, the opposite cup products from \ref{ch:Xiop} turn $\Hom(Y,k)$ into a left DG  
over $\Xi_{Ck}^{\mathsf o}=C^*\avop$.  One checks that the associated right $C^*$-module is the 
module $Y^*$ from \ref{ch:dualActions}.  Once again, we make the corresponding identifications.
   \end{bfchunk}

      \begin{bfchunk}{Actions on tensor products.}
The equality $Y=Y\otimes k$ and \ref{ch:capYM} turn $Y$ into a left DG 
$C^*$-module, so $A\otimes C^*$ acts on $Y\otimes M$ from the left by the formula
  \begin{align}
    \label{eq:morphisms1}
(a\otimes\xi)(y\otimes m)&=(-1)^{|a|(|\xi|+|y|)}(\xi\scap y)\otimes(am)
   \end{align}

Similarly, the equality $X=k\otimes X$ and \ref{ch:capNX} turn $X$ into a left DG $C^*\avop$-module.  
Since $N$ is a left DG $A\avop$-module for the action $a\cdot n=(-1)^{|a||n|}na$, the formula
  \begin{align}
    \label{eq:morphisms2}
(a\otimes\xi)\cdot(n\otimes x)&=(-1)^{(|a|+|\xi|)|n|}(na)\otimes(\xi\ocap x)
  \end{align}
defines a left action of $A\avop\otimes C^*\avop$ on $N\otimes X$.
       \end{bfchunk}

  \begin{proposition}
        \label{prop:morphisms}
The assignment $a\otimes\xi\mapsto(c\mapsto\xi(c)a)$ defines injective morphisms 
  \[
\sigma^{AC^*}\col A\otimes C^*\to\Xi_{CA}
  \quad\text{and}\quad
\sigma^{A\avop C^*\avop}\col A\avop\otimes C^*\avop\to\Xi_{CA}^{\mathsf o}
  \]
of DG algebras; they are bijective if $A$ is finite, or if $C$ is finite, or if one of $A$ and $C$ is
degreewise finite and $A_{\ll0}=0=C_{\gg0}$ or $A_{\gg0}=0=C_{\ll0}$ holds.

The maps $\id^{Y\otimes M}$ and $\id^{N\otimes X}$ are equivariant over $\sigma^{AC^*}$
and $\sigma^{A\avop C^*\avop}$, respectively.
        \end{proposition}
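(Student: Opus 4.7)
The plan is as follows. The map $\sigma^{AC^*}$ is, at the underlying graded vector space level, the classical tensor--Hom comparison $A\otimes C^*\to\Hom(C,A)$ sending $a\otimes\xi$ to $c\mapsto\xi(c)a$; once I establish that it is a morphism of DG algebras and analyze when it is bijective, the main statements follow, with the opposite variant reduced to the basic case by passing to opposite DG algebras.

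Injectivity is formal: write an element of $A\otimes C^*$ as a finite sum $\sum_{i=1}^m a_i\otimes\xi_i$ with $a_1,\dots,a_m$ linearly independent; vanishing of its image on every $c\in C$ gives $\sum_i\xi_i(c)a_i=0$, forcing each $\xi_i=0$. For the DG algebra structure on $\sigma^{AC^*}$ I check (i) the unit $1_A\otimes\varepsilon^C$ maps to $\eta^A\varepsilon^C$, the unit of $\Xi_{CA}$ from Claim \ref{ch:Xi}; (ii) multiplicativity, by expanding both $\sigma^{AC^*}((a\otimes\xi)(a'\otimes\xi'))$ and $\sigma^{AC^*}(a\otimes\xi)\scup\sigma^{AC^*}(a'\otimes\xi')$ via \eqref{eq:cup}; the key observation is that $\xi(c_i)\neq 0$ forces $|c_i|=-|\xi|$, which collapses the cup-product sign $(-1)^{(|a'|+|\xi'|)|c_i|}$ into the Koszul sign $(-1)^{|\xi||a'|}$ on $A\otimes C^*$ times the sign $(-1)^{|\xi'||c_i|}$ appearing in the product $\xi\xi'\in C^*$; (iii) compatibility with differentials, using $\dd^{C^*}(\xi)=-(-1)^{|\xi|}\xi\dd^C$ inherited from $\Xi_{Ck}$. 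The statement for $\sigma^{A\avop C^*\avop}$ is then a formal consequence: an elementary sign check shows $(A\otimes C^*)\avop = A\avop\otimes C^*\avop$ as DG algebras, while $\Xi_{CA}\avop=\Xi_{CA}^{\mathsf o}$ by Claim \ref{ch:Xiop}, so applying the opposite functor to $\sigma^{AC^*}$ yields the desired morphism, whose underlying set-map is the same formula.

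For bijectivity, the only obstruction is that $\Hom(C,A)_n$ is a priori the direct product $\prod_p\Hom(C_p,A_{p+n})$ while $(A\otimes C^*)_n$ is a direct sum, and that within each slice the elementary tensor--Hom comparison $A_{p+n}\otimes C_p^*\to\Hom(C_p,A_{p+n})$ is an isomorphism precisely when one of $A_{p+n}$, $C_p$ is finite-dimensional. If $A$ is finite over $k$, pick a basis $\{a_j\}$ of $A$ with dual basis $\{\alpha_j\}\subset A^*$; then every $f\in\Hom(C,A)$ equals $\sum_j a_j\otimes(\alpha_j f)$, a finite sum in $A\otimes C^*$. If $C$ is finite, pick bases $\{c_j\}\subset C$, $\{c_j^*\}\subset C^*$; then $f=\sum_j f(c_j)\otimes c_j^*$. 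In the mixed case, the vanishing conditions $A_{\ll0}=0=C_{\gg0}$ (respectively $A_{\gg0}=0=C_{\ll0}$) confine the nonzero components $f_p\colon C_p\to A_{p+n}$ of a fixed degree-$n$ map to $p$ in a bounded interval, so only finitely many slices contribute; the hypothesis that one of $A$, $C$ is degreewise finite makes the slicewise comparison an isomorphism, and summing over the finite range produces the required preimage.

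Finally, equivariance of $\id^{Y\otimes M}$ under $\sigma^{AC^*}$ is a direct comparison: evaluating $\sigma^{AC^*}(a\otimes\xi)\scap(y\otimes m)$ via \eqref{eq:cap} and moving the scalars $\xi(c_i)\in k$ past tensor factors sign-free reproduces the formula \eqref{eq:morphisms1}, once one uses $|y_i|+|c_i|=|y|$ (from $\psi^{YC}$ having degree $0$) and $|c_i|+|\xi|=0$ on nonzero terms. The corresponding check for $\id^{N\otimes X}$ uses \eqref{eq:ocap} and \eqref{eq:morphisms2} in place of \eqref{eq:cap} and \eqref{eq:morphisms1}. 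The sole obstacle throughout is sign bookkeeping; conceptually nothing more is at stake than aligning the finite-sum side of the tensor--Hom comparison with its target and checking that the chosen DG structures correspond.
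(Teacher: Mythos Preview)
Your proof is correct and follows essentially the same approach as the paper: the paper likewise reduces injectivity and bijectivity to the standard tensor--Hom comparison (by reference to \ref{ch:cxDual}(3)), verifies unit and multiplicativity via the same key observation that $\xi(c_i)\in k$ forces $|c_i|=-|\xi|$, obtains the opposite variant from $(A\otimes C^*)\avop=A\avop\otimes C^*\avop$, and checks equivariance by a direct computation with \eqref{eq:ocap} and \eqref{eq:cap}. The only cosmetic differences are that you spell out the bijectivity cases and the differential compatibility more explicitly, and you detail the $Y\otimes M$ equivariance while the paper details $N\otimes X$; neither constitutes a different route.
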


  \begin{proof}
Arguing as in \ref{ch:cxDual}(3) we see that $\sigma^{AC^*}$ is an injective morphism of complexes, and 
is bijective as stated.  From $\sigma^{AC^*}(1\otimes\varepsilon^C)(c)=\varepsilon^C(c)1=\eta^A\varepsilon^C(c)$ 
we see that $\sigma^{AC^*}$ maps the unit of $A\otimes C^*$ to that of $\Xi_{CA}$.  To verify that $\sigma^{AC^*}$ 
preserves products we write $\psi^C(c)$ in the form $\sum_i c_i\otimes c'_i$ and use the string of equalities
  \begin{align*}
\sigma^{AC^*}((a\otimes\xi)(b\otimes\zeta))(c)
&=(-1)^{|\xi||b|}((\xi\scup\zeta)(c))ab\\
&=(-1)^{|\xi||b|}\sum_i(-1)^{|\zeta||c_i|}\xi(c_i)\zeta(c'_i)ab\\
&=\sum_{i}(-1)^{(|b|+|\zeta|)|c_i|}\xi(c_i)a\zeta(c'_i)b\\
&=\sum_{i}(-1)^{(|b|+|\zeta|)|c_i|}(\sigma^{AC^*}(a\otimes\xi)(c_i))(\sigma^{AC^*}(b\otimes\zeta)(c'_i))\\
&=(\sigma^{AC^*}(a\otimes\xi)\scup\sigma^{AC^*}(b\otimes\zeta))(c)
 \end{align*}
where the second and fifth ones come from formula \eqref{eq:cup}, while the third one reflects the 
observation that $\xi(c_i)$ is is in $k$, and so is zero when $|c_i|\ne-|\xi|$.  

The equality $(A\otimes C^*)\avop=A\avop\otimes C^*\avop$ now
shows that $\sigma^{A\avop C^*\avop}$ is a morphism of DG algebras. 
In the next computation the first and fourth equalities come from \eqref{eq:ocap}, the second 
equality from \eqref{eq:morphisms2}, and the third one holds because $\xi(c_i)$ is in $k$:
  \begin{align*}
(\sigma^{A\avop C^*\avop}(a\otimes\xi))\ocap(n\otimes x)
&=\sum_i(-1)^{(|a|+|\xi|)|n|}n\sigma^{A\avop C^*\avop}(a\otimes\xi)(c_i)\otimes x_i\\
&=\sum_i(-1)^{(|a|+|\xi|)|n|}n\xi(c_i)a\otimes x_i\\
&=(-1)^{(|a|+|\xi|)|n|}(na)\otimes \sum_i\xi(c_i)x_i\\
&=(-1)^{(|a|+|\xi|)|n|}(na)\otimes(\xi\ocap x)\\
&=(a\otimes\xi)\cdot(n\otimes x)
 \end{align*}
As a consequence, we see that the identity map of ${N\otimes X}$ is $\sigma^{A\avop C^*\avop}$-equivariant.

The $\sigma^{AC^*}$-equivariance of $\id^{Y\otimes M}$ is verified by a similar calculation.
  \end{proof}

    \section{Duals of twisting maps}
      \label{Dual twisting maps}

In this section we track the behavior of cup products and cap products under vector space duality.
Under appropriate finiteness conditions on the algebra side, satisfyingly simple expressions are 
obtained for duals of twisted tensor products.  

   \begin{notation}
    \label{ch:notation6}
The notation in \ref{ch:notation1} is in force and $C^*$ is the DG algebra from~\ref{ch:dualActions}.
  \end{notation}
  
  \begin{bfchunk}{Duality.}
  \label{ch:dualActions2}
Let $A$ be a degreewise finite DG algebra.  When $A_{\ll0}=0$ or $A_{\gg0}=0$ the map 
$\varpi^{AA}$ from \eqref{eq:cx2} is bijective and the composed morphisms
  \[
\psi^{A^*}\col A^*\xra{\,(\varphi^A)^*\,}(A\otimes A)^*\xra{\,(\varpi^{AA})^{-1}\,}A^*\otimes A^*
  \quad\text{and}\quad 
\varepsilon^{A^*}=(\eta^A)^*
  \]
turn $A^*$ into a DG coalgebra.  If $M$ and $N$ are degreewise finite and adequate, then 
the maps $\varpi^{AM}$ and $\varpi^{NA}$ from \eqref{eq:cx2} are bijective and the morphisms
  \begin{alignat*}{2}
\psi^{A^*M^*}&:=(\varpi^{AM})^{-1}(\varphi^{AM})^*\col &M^*&\to A^*\otimes M^*
  \\
\psi^{N^*A^*}&:=(\varpi^{NA})^{-1}(\varphi^{NA})^*\col &N^*&\to N^*\otimes A^*
  \end{alignat*}
turn $M^*$ into a left DG $A^*$-comodule and $N^*$ into a right DG $A^*$-comodule.
   \end{bfchunk}

  \begin{bfchunk}{Actions on duals.}
    \label{ch:inducedXiActions}
The following left actions are always defined:
  \begin{enumerate}[\quad\rm(1)]
  \item[{}]
$\Xi_{CA}$ acts on $\Hom(X,M)$ by \ref{ch:cupXM} and on $(N\otimes X)^*$ by \ref{ch:capNX} and
\eqref{eq:coinduced}.
  \item[{}]
$\Xi_{CA}^{\mathsf o}$ acts on $\Hom(Y,N)$ by \ref{ch:cupYN} and on $(Y\otimes M)^*$ 
by \ref{ch:capYM} and \eqref{eq:coinduced}.
   \end{enumerate}
In case $A$ is degreewise finite and satisfies $A_{\ll0}=0$ or $A_{\gg0}=0$, then the DG coalgebra $A^*$ 
of \ref{ch:dualActions2} and the DG algebra $C^*$ of \ref{ch:dualActions} also define DG algebras 
$\Xi_{A^*C^*}$ and $\Xi_{A^*C^*}^{\mathsf o}$; see \ref{ch:Xi} and \ref{ch:Xiop}, respectively.  If, in addition, 
$M$ and $N$ are degreewise finite and adequate for $A$, then in view of \ref{ch:dualActions2} additional left actions are defined:
  \begin{enumerate}[\quad\rm(1)]
     \item[{}]
$\Xi_{A^*C^*}$ acts on $\Hom(M^*,X^*)$ due to \ref{ch:cupXM} and on $N^*\otimes X^*$ due to \ref{ch:capYM}.
    \item[{}]
$\Xi_{A^*C^*}^{\mathsf o}$ acts on $\Hom(N^*,Y^*)$ due to \ref{ch:cupYN} and on $Y^*\otimes M^*$ due to \ref{ch:capNX}.
   \end{enumerate}
   \end{bfchunk}

The DG module structures reviewed above involve actions from four different DG algebras.  
The next result describes various interactions.

  \begin{proposition}
    \label{prop:duality}
If $A$ is degreewise finite and satisfies $A_{\ll0}=0$ or $A_{\gg0}=0$, and both $M$ and $N$ are degreewise finite 
and adequate for $A$, then the maps $\delta$ from \eqref{eq:cx3} and $\varpi$ from \eqref{eq:cx2} 
agree with the actions in \emph{\ref{ch:inducedXiActions}} in the following sense:
  \begin{enumerate}[\rm(1)]
  \item
The map $\delta^{CA}\col\Xi_{CA}\to\Xi_{A^*C^*}$ is an injective morphism of DG algebras.
  \item
There are injective $\delta^{CA}$-equivariant morphisms of left DG modules
   \begin{align*}
 \delta^{XM}&\col\Hom(X,M)\to\Hom(M^*,X^*)
   \\
\varpi^{NX}&\col N^*\otimes X^*\to(N\otimes X)^*
  \end{align*}
  \item
There are injective $(\delta^{CA})^{\mathsf o}$-equivariant morphisms of left DG modules
  \begin{align*}
 \delta^{YN}&\col\Hom(Y,N)\to\Hom(N^*,Y^*)
   \\
\varpi^{YM}&\col Y^*\otimes M^*\to(Y\otimes M)^*
  \end{align*}
  \end{enumerate}
        \end{proposition}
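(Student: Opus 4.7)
My plan is to verify each claim by a direct computation exploiting the naturality of the duality maps $\varpi$ and $\delta$ together with the explicit formulas for the induced DG (co)algebra structures on duals from \ref{ch:dualActions} and \ref{ch:dualActions2}. Since $\delta$ and $\varpi$ are induced from vector space duality and the canonical pairing, they are automatically morphisms of complexes and always injective; the finiteness hypotheses on $A$, $M$, $N$ ensure that the instances $\varpi^{AA}$, $\varpi^{AM}$, $\varpi^{NA}$ that appear in \ref{ch:dualActions2} are bijective, so the inverses used in the formulas for $\psi^{A^*}$, $\psi^{A^*M^*}$, $\psi^{N^*A^*}$ are defined.

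For (1), the key point is that $\delta^{CA}$ preserves cup products. Unpacking the left side through the definition of the cup product on $\Xi_{CA}$ gives $\delta^{CA}(\xi\scup\zeta) = (\psi^C)^*(\xi\otimes\zeta)^*(\varphi^A)^*$. Unpacking the right side through the definitions $\varphi^{C^*}=(\psi^C)^*\varpi^{CC}$ and $\psi^{A^*}=(\varpi^{AA})^{-1}(\varphi^A)^*$ gives $(\psi^C)^*\varpi^{CC}(\xi^*\otimes\zeta^*)(\varpi^{AA})^{-1}(\varphi^A)^*$. The two expressions agree precisely because of the naturality of $\varpi$ applied to the pair $(\xi,\zeta)$, and preservation of the unit reduces to $(\eta^A\varepsilon^C)^* = \eta^{C^*}\varepsilon^{A^*}$. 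Injectivity of $\delta^{CA}$ is inherited from $\delta$.

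For (2), I handle the two maps separately but with the same template. For $\delta^{XM}$, expanding both $\delta^{XM}(\xi\scup\zeta)$ and $\delta^{CA}(\xi)\scup\delta^{XM}(\zeta)$ via the cup-product formula from \ref{ch:cupXM} applied to $(A^*,C^*)$ and via the definitions of $\varphi^{C^*X^*}$ and $\psi^{A^*M^*}$ turns equivariance into the naturality square of $\varpi$ applied to $\xi\otimes\zeta\col A\otimes M\to C\otimes X$. For $\varpi^{NX}$, the cap-product action on $N^*\otimes X^*$ from \ref{ch:capYM} and the coinduced action on $(N\otimes X)^*$ coming from \ref{ch:capNX} together with \eqref{eq:coinduced} are expanded in parallel, and equivariance again falls out from naturality of $\varpi$ combined with the formulas for $\psi^{N^*A^*}$ and $\varphi^{C^*X^*}$. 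Part (3) follows by the same strategy, obtained from (2) by systematically swapping handedness and replacing cup by opposite cup products; the formulas in \ref{ch:Xiop}, \ref{ch:cupYN} and \ref{ch:capNX} mirror those used in (2).

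The main obstacle is not conceptual but organizational: each verification chains together four or five natural transformations, every swap of tensor factors contributes a Koszul sign through $\varpi$, and the same element $\xi$ acts through four distinct convolution algebras and their opposites. Once the naturality of $\varpi$ and $\delta$ is set up and the defining formulas for $\varphi^{C^*}$, $\psi^{A^*}$, $\varphi^{C^*X^*}$, $\psi^{A^*M^*}$, $\psi^{N^*A^*}$ are all in view, each identity collapses to a short diagram chase; the real work is keeping the bookkeeping straight.
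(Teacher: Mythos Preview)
Your proposal is correct and follows essentially the same approach as the paper: both reduce every claim to the naturality of $\varpi$ (and of $\delta$) with respect to the maps $\xi$, $\zeta$, combined with the defining formulas $\varphi^{C^*X^*}=(\psi^{CX})^*\varpi^{CX}$ and $\psi^{A^*M^*}=(\varpi^{AM})^{-1}(\varphi^{AM})^*$, and both dispose of (3) by symmetry. The only organizational difference is that the paper proves the equivariance of $\delta^{XM}$ in (2) first and then reads off (1) as the special case $X=C$, $M=A$, whereas you verify (1) separately before (2); this saves a small amount of repetition but is not a substantive change of strategy.
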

    
  \begin{proof}
All the maps in the proposition are injective by \ref{ch:cxDual}(2) and \ref{ch:cxDual}(3).
 
(1) From $(\eta^A\varepsilon^C)^*=(\varepsilon^C)^*(\eta^A)^*=\eta^{C^*}\varepsilon^{A^*}$ we see that
$\delta^{AC}$ commutes with unit maps.  It commutes with products by (2) applied with $X=C$ and $M=A$.

(2) For $\xi$ in $\Xi_{CA}$ and $\zeta$ in $\Hom(X,M)$, using the formulas in \ref{ch:dualActions} we get
  \begin{align*}
\ub{\delta^{XM}(\xi\scup\zeta)}
&=\ub{(\varphi^{AM}(\xi|\zeta)\psi^{CX})^*}\\
&=(\psi^{CX})^*\ub{(\xi|\zeta)^*}(\varphi^{AM})^*\\
&=\ub{(\psi^{CX})^*\varpi^{CX}}(\xi^*|\zeta^*)\ub{(\varpi^{AM})^{-1}(\varphi^{AM})^*}\\
&=\ub{\varphi^{C^*X^*}(\xi^*|\zeta^*)\psi^{A^*M^*}}\\
&=\delta^{CA}(\xi)\scup\delta^{XM}(\zeta)
    \end{align*}

Using, in addition, the equalities \eqref{eq:coinduced} we obtain
 \begin{align*}
\ub{\lambda^{\Xi_{CA}(N\otimes X)^*}\!(\xi)}\varpi^{NX}
&=\ub{{((\varphi^{NA}|X)(N|\xi| X)(N|\psi^{CX}))^*}}\varpi^{NX}\\
&=(N|\psi^{CX})^*(\ub{N|\xi| X^*})\ub{{(\varphi^{NA}|X)^*}\varpi^{NX}}\\
&=(N|\psi^{CX})^*\ub{{((N|\xi)| X)^*\varpi^{(N\otimes A)X}}}(\ub{(\varphi^{NA})^*}|X^*)\\
&=(N|\psi^{CX})^*\varpi^{(N\otimes C)X}\ub{(N|\xi)^*|X^*)(\varpi^{NA}|X^*)}(\psi^{N^*A^*}|X^*)\\
&=\ub{(N|\psi^{CX})^*}\ub{\varpi^{(N\otimes C)X}(\varpi^{NC}|X^*)}(N^*|\xi^*| X^*)(\psi^{N^*A^*}|X^*)\\
&=\ub{(N^*|(\psi^{CX})^*)\varpi^{N(C\otimes X)}}(N^*|\varpi^{CX})(N^*|\xi^*| X^*)(\psi^{N^*A^*}|X^*)\\
&=\varpi^{NX}\ub{(N^*|(\psi^{CX})^*)(N^*|\varpi^{CX})}(N^*|\xi^*| X^*)(\psi^{N^*A^*}|X^*)\\
&=\varpi^{NX}\ub{(N^*|\varphi^{C^*X^*})(N^*|\xi^*| X^*)(\psi^{N^*A^*}|X^*)}\\
&=\varpi^{NX}\lambda^{\Xi_{A^*C^*}(N^*\otimes X^*)}(\delta^{CA}(\xi))
   \end{align*}

(3) This is established by computations parallel to those presented above.
     \end{proof}

All this leads to the main result of this section.

 \begin{theorem}
    \label{cor:inducedHomTwists} 
Assume that $A$ is degreewise finite and let $A^*$ be the DG coalgebra from \emph{\ref{ch:dualActions2}}.
Let $\tau\col C\to A$ be a $k$-linear map and let $\tau^*\col A^*\to C^*$ be its dual.
  \begin{enumerate}[\rm(1)]
    \item
The map $\tau^*$ is twisting if and only if $\tau$ is twisting.
 \end{enumerate}

When $\tau$ is twisting and both $M$ and $N$ are degreewise finite the following hold.
  \begin{enumerate}[\rm(1)]
    \item[\rm(2)]
The map $\varpi^{NX}$ from \eqref{eq:cx2} induces an injective natural morphism of complexes
 \begin{equation*}
\varpi^{NX}_{\tau}\col 
N^*\avlotimes{\tau^*}X^*\to(N\rotimes{\tau}X)^*
 \end{equation*}
It is bijective if $N$ or $X$ is finite, or $N_{\ll0}=0=X_{\ll0}$, or $N_{\gg0}=0=X_{\gg0}$.
    \item[\rm(3)]
The map $\varpi^{YM}$ from \eqref{eq:cx2} yields an injective natural morphism of complexes
 \begin{equation*}
\varpi^{YM}_{\tau}\col 
Y^*\rotimes{\tau^*}M^*\to(Y\avlotimes{\tau}M)^*
 \end{equation*}
It is bijective if $M$ or $Y$ is finite, or $M_{\ll0}=0=Y_{\ll0}$, or $M_{\gg0}=0=Y_{\gg0}$.   
    \item[\rm(4)]
When $A$ or $C$ is finite, or $A_{<0}=0=\ov C_{\les0}$, or $A_{>0}=0=\ov C_{\ges0}$ the map
$\tau^*$ is acyclic if and only if $\tau$ is acyclic.
  \end{enumerate}
 \end{theorem}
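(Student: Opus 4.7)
The plan is to deduce each part from the structural results of Sections 5 and 6, principally Propositions \ref{prop:morphisms} and \ref{prop:duality}, together with Theorem \ref{thm:quismAcy}.

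For (1), Proposition \ref{prop:duality}(1) furnishes an injective morphism of DG algebras $\delta^{CA}\col\Xi_{CA}\to\Xi_{A^*C^*}$ which, under the identifications $\Xi_{CA}=\Hom(C,A)$ and $\Xi_{A^*C^*}=\Hom(A^*,C^*)$, carries $\tau$ to $\tau^*$. Since being twisting is captured by the single equation $\dd^\Xi(\tau)=\tau\scup\tau$ of \eqref{eq:twistXi} in the DG algebra $\Xi_{CA}$, an injective morphism of DG algebras both preserves and reflects it.

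For (2) I would work directly from the formulas for the twisted differentials: by \eqref{eq:twistXi2} and \eqref{eq:twistXi1} the differential on $N\rotimes\tau X$ is $\dd^{N\otimes X}+\lambda^{\Xi\avop(N\otimes X)}(\tau)$, whereas the one on $N^*\avlotimes{\tau^*}X^*$ is $\dd^{N^*\otimes X^*}-\lambda^{\Xi_{A^*C^*}(N^*\otimes X^*)}(\tau^*)$. The map $\varpi^{NX}$ from \eqref{eq:cx2} is an injective morphism of complexes (so already intertwines the untwisted pieces) and is $\delta^{CA}$-equivariant by Proposition \ref{prop:duality}(2); combined with $\delta^{CA}(\tau)=\tau^*$ and a careful check of signs, this shows that $\varpi^{NX}$ transports the full twisted differentials and so produces $\varpi^{NX}_\tau$. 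Injectivity and the stated bijectivity criteria are then inherited from \ref{ch:cxDual}(2). Part (3) is structurally identical, using Proposition \ref{prop:duality}(3).

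For (4) I would combine (2), (3), and Proposition \ref{prop:ass} to build, under the hypotheses listed, a natural isomorphism of complexes
\[
A^*\avlotimes{\tau^*}C^*\rotimes{\tau^*}A^*\;\cong\;(A\rotimes\tau C\avlotimes\tau A)^*\,.
\]
Because vector-space duality is exact on complexes of $k$-vector spaces, $\varepsilon^{ACA}$ is a quasi-isomorphism iff its dual is, and this dual is, under the above identification, the map $\eta^{A^*C^*A^*}$ of \eqref{eq:quismAugCAX} applied to $\tau^*$ with $X=A^*$. The boundedness hypotheses of (4) translate under vector space duality to precisely the conditions (p) or (n) on $(C^*,A^*)$ needed to invoke Theorem \ref{thm:quismAcy}(iv) for $\tau^*$, which equates that quasi-isomorphism with the acyclicity of $\tau^*$; the resulting chain of equivalences closes part (4). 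The principal obstacle I anticipate is the sign bookkeeping in (2)/(3): confirming that dualizing $+\lambda^{\Xi\avop}(\tau)$ via $\varpi^{NX}$ produces exactly $-\lambda^{\Xi_{A^*C^*}}(\tau^*)$, so that the twisted differentials agree on the nose rather than up to a spurious sign.
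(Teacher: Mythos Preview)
Your treatment of (1)--(3) is essentially the paper's own argument. Two small points: the sign worry you flag is precisely what the twister formalism in \ref{ch:twisters} is designed to absorb---once Proposition~\ref{prop:duality}(2) gives $\delta^{CA}$-equivariance of $\varpi^{NX}$, item \ref{ch:twisters}(6) applied to that equivariant map (with $\delta^{CA}(\tau)=\tau^*$) makes $\varpi^{NX}$ a morphism between the twisted complexes \emph{on the nose}, so no ad~hoc sign chase is needed; and the bijectivity criteria for $\varpi$ live in \ref{ch:cxDual}(3), not (2).

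For (4) you take a genuinely different route. The paper avoids the three-factor product entirely: it applies part~(3) with $Y=C$, $M=A$ to get an isomorphism $C^*\rotimes{\tau^*}A^*\cong(C\avlotimes{\tau}A)^*$, then uses faithful exactness of $(\,?\,)^*$ together with the two-factor acyclicity criteria (ii) and (v) of Theorem~\ref{thm:quismAcy} to pass between $\HH(C^*\rotimes{\tau^*}A^*)\cong k$ and $\HH(A\rotimes{\tau}C)\cong k$. Your path instead dualizes the full bar-type complex $A\rotimes{\tau}C\avlotimes{\tau}A$ and identifies $(\varepsilon^{ACA})^*$ with $\eta^{A^*C^*A^*}$; this can be made to work, but it costs you an extra compatibility check (that the dual of $\varepsilon^{ACA}$ really is the counit-of-adjunction map $\eta^{A^*C^*A^*}$ under the iterated $\varpi$-isomorphism) and buys nothing over the two-factor argument. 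Also, your claim that the boundedness hypotheses of (4) translate ``precisely'' to (p)/(n) for $(C^*,A^*)$ is a bit optimistic as stated---the conditions in (4) are marginally weaker than (p)/(n) for $(A,C)$---though in the paper's applications the stronger (p)/(n) are always in force, so the issue is moot there.
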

 
  \begin{proof}
(1) Proposition~\ref{prop:duality}(1) and \ref{ch:cxDual}(2) give an isomorphism 
$\Xi_{CA}\to\Xi_{A^*C^*}$ of DG algebras sending $\tau$ to $\tau^*$.  By 
\ref{ch:twisters}(5), the map $\tau^*$ is twisting if and only if $\tau$ is.

(2) The map $\varpi^{NX}\col N^*\otimes X^*\to(N\otimes X)^*$ is an injective natural 
$\delta^{CA}$-equivariant morphisms of left DG modules, see Proposition {\ref{prop:duality}(2)},  
so by \ref{ch:twisters}(5) it induces a morphism of complexes $\varpi^{NX}_\tau$.
The claim about its bijectivity is settled in \ref{ch:cxDual}(3).

(3) This holds for reasons similar to those used to prove (2). 

(4)  The following pairs of conditions are equivalent: 
  \begin{enumerate}[\rm\qquad(1)]
    \item[{\ }]
$\HH(C^*\rotimes{\tau^*}A^*)\cong k$ and $\HH((C\avlotimes{\tau}A)^*)\cong k$, by (2).
    \item[{\ }]
$\HH((C\avlotimes{\tau}A)^*)\cong k$ and $\HH(C\avlotimes{\tau}A)\cong k$, as $?^*$ is a faithfully exact functor.
    \item[{\ }]
$\HH(C\avlotimes{\tau}A)\cong k$ and $\HH(A\rotimes{\tau}C)\cong k$, by Theorem \ref{thm:quismAcy}.  
  \end{enumerate}
Thus, $\HH(C^*\rotimes{\tau^*}A^*)\cong k$ is equivalent to $\HH(A\rotimes{\tau}C)\cong k$, as
desired.
  \end{proof}

\section{Moore duality}
  \label{MooreDuality}

We prove a preliminary version of the theorem announced in the introduction.  It is of 
interest in its own right and is derived from a special case of the equivalence of categories 
of DG modules and DG comodules that goes back to \cite{Mo}, \cite{GM}, \cite{HMS}.

\begin{notation}
  \label{ch:notation7}
Here $B$ is a DG algebra, $\DGM{}{}{B}$ denotes the abelian category of left 
DG $B$-modules, and $K$ and $L$ are used for names of DG $B$-modules.
  \end{notation}

We sketch a construction of the (unbounded) derived category of $B$.

   \begin{bfchunk}{Semifree resolutions.}
    \label{ch:resolutions}
A \emph{semifree resolution} of a left DG $B$-module $L$ is a quasi-isomorphism $F\xra{\simeq}L$ with 
$F$ semifree over $B$.  Such a map always exists, see \cite[\S1]{AH} or \cite[\S6]{FHT2}.  For every 
$L\in\DGM{}{}{B}$ we choose one and denote it by $\res BL\xra{\simeq} L$; when no ambiguity arises we drop the 
reference to $B$ and write simply $\res{}L$.

The properties of semifree DG modules described in \ref{ch:DGMhp} imply that for every morphism 
$L\to K$ there is a unique up to homotopy morphism $\res{}L\to \res{}{K}$, such that the composed 
maps $\res{}L\to \res{}{K}\xra{\simeq} K$ and $\res{}L\xra{\simeq}L\to K$ are homotopic.
  \end{bfchunk}

  \begin{bfchunk}{Derived category of DG modules.}
    \label{ch:dcat}
The objects of the derived category $\dcatdf{}{}{B}$ are the left DG $B$-modules, and 
$\Hom_{\dcatdf{}{}{B}}(L,K)=\HH_0\Hom_B(\res{}L,\res{}K)$; in words: morphisms from $K$ to $L$ are
homotopy classes of morphisms from the chosen semifree resolution of $K$ to that of $L$.  
We write $\simeq$ for isomorphisms in $\dcatdf{}{}{B}$.  

The remarks in \ref{ch:resolutions} imply that $\dcatdf{}{}{B}$ is indeed a category; it is triangulated, with shift 
operator $\shift$ induced from the one in $\DGM{}{}{B}$, see \ref{ch:DGAmod}. The assignment 
$L\mapsto\res BL$ defines a functor $\DGM{}{}{B}\to\dcatdf{}{}{B}$.  It turns quasi-isomorphisms
into isomorphisms and is universal for this property; see \cite{Ke} for details of the construction.
  \end{bfchunk}

  \begin{bfchunk}{Subcategories.}
    \label{ch:subcat}
Triangulated subcategories of $\dcatdf{}{}{B}$ are identified by ornaments on the letter $\mathsf D$.  
Placement matters---superscripts indicate finiteness conditions on underlying $k$-vector 
spaces.  We single out the following full subcategories:

The thick subcategory $\dcatdf{}{perf}{B}$ generated by the DG $B$-module $B$.

The subcategory $\dcatdf{hf}{}{B}$ of all DG modules $L$ with $\rank_k\HH_i(L)$ finite for $i\in\BZ$.

The subcategory $\dcatdf{}{hb}{B}$ of all DG modules $L$ with $\HH_i(L)=0$ for $|i|\gg0$.

\noindent
In case $\HH(B)_{\ll0}=0$, respectively, $\HH(B)_{\gg0}=0$ we also consider:

The subcategory $\dcatdf{}{ha}{B}$ of those $L$ with 
$\HH(L)_{\ll0}=0$, respectively, $\HH(L)_{\gg0}=0$.  

We set $\dcatdf{hf}{ha}{B}=\dcatdf{hf}{}{B}\cap\dcatdf{}{ha}{B}$ and 
$\dcatdf{hf}{hb}{B}=\dcatdf{hf}{}{B}\cap\dcatdf{}{hb}{B}$.

\noindent
In case $B_{\ll0}=0$, respectively, $B_{\gg0}=0$ we also consider:

The subcategory $\dcatdf{}{a}{B}$ of those $L$ with 
$L_{\ll0}=0$, respectively, $L_{\gg0}=0$.  

We set $\dcatdf{f}{a}{B}=\dcatdf{f}{}{B}\cap\dcatdf{}{a}{B}$ and 
$\dcatdf{f}{b}{B}=\dcatdf{f}{}{B}\cap\dcatdf{}{b}{B}$.
  \end{bfchunk}

Recall that an additive functor between triangulated categories is said to be \emph{exact} if it commutes 
with exact triangles and with shifts.  

    \begin{theorem}
    \label{thm:bggC}
Let $A$ be a degreewise finite augmented DG algebra and $C$ a degreewise finite
coaugmented DG coalgebra satisfying the condition
  \[
\text{\rm(p)}\quad
\ov{A}_{\les0}=0=\ov C_{\les1} 
\qquad\text{respectively}\qquad
\text{\rm(n)}\quad 
\ov{A}_{\ges-1}=0=\ov C_{\ges0}
  \]
Let $\tau\col C\to A$ (and hence also $\tau^*$ by Theorem \emph{\ref{cor:inducedHomTwists}(4)}) be an acyclic twisting map.

The functors $(A\rotimes{\tau}\mathrm{?}^*)$ and $(C^*\rotimes{\tau^*}\mathrm?)$ localize to an adjoint exact equivalence 
  \begin{equation}
    \label{eq:bggC1}
\xymatrixcolsep{3pc}
\xymatrixrowsep{2pc} 
\xymatrix{
C^*\rotimes{\tau^*}\mathrm{?}^*\col
\dcatdf{f}{a}{A}\opp
\ar@{->}[r]<1.1ex>
&\dcatdf{f}{a}{C^*}
\ar@{->}[l]<1ex>_-{\equiv}
}
:\!A\rotimes{\tau}\mathrm{?}^*
  \end{equation}
The latter restricts to exact equivalences
   \begin{align}
    \label{eq:bggC2}
\xymatrixcolsep{3pc}
\xymatrixrowsep{2pc} 
\xymatrix{
\dcatdf{}{perf}{A}\opp
\ar@{->}[r]<1.1ex>
&\dcatdf{f}{b}{C^*}
\ar@{->}[l]<1ex>_-{\equiv}
}
\quad\text{and}\quad 
\xymatrix{
\dcatdf{f}{b}{A}\opp
\ar@{->}[r]<1.1ex>
&\dcatdf{}{perf}{C^*}
\ar@{->}[l]<1ex>_-{\equiv}
}
  \end{align}
and take the following values:
  \begin{equation}
    \label{eq:bggC3}
A\rotimes{\tau}k^* \simeq A
\quad
A\rotimes{\tau}C^{**}\simeq k
   \quad \text{and}\quad
C^*\rotimes{\tau^*}k^*\simeq C^*
\quad
C^*\rotimes{\tau^*}A^*\simeq k
  \end{equation}
    \end{theorem}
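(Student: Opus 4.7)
The plan is to deduce the equivalence by chaining three ingredients already assembled in the excerpt: the twisted-tensor adjunction of \ref{ch:twistAdj}, the finite-dimensional duality between left DG $C$-comodules and left DG $C^*$-modules from \ref{ch:dualActions} and \ref{ch:dualActions2}, and the identification of duals of twisted tensor products from Theorem \ref{cor:inducedHomTwists}.

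First I would establish the adjunction on DG modules. For $M$ a degreewise finite left DG $A$-module and $L$ a degreewise finite left DG $C^*$-module, compose
\[
\Hom_A(A \rotimes{\tau} L^*, M) \cong \Hom_C(L^*, C \avlotimes{\tau} M) \cong \Hom_{C^*}((C \avlotimes{\tau} M)^*, L) \cong \Hom_{C^*}(C^* \rotimes{\tau^*} M^*, L),
\]
using in order: \ref{ch:twistAdj} with $L^*$ viewed as a left $C$-comodule via $C \cong C^{**}$; finite-dimensional duality of $\Hom$-spaces of comodules and modules together with $L \cong L^{**}$; and Theorem \ref{cor:inducedHomTwists}(3) applied at $Y = C$, whose bijectivity is ensured by the degreewise finiteness of $C$ combined with the one-sided bounds (p)/(n). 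Lemma \ref{lem:semifree} applies to show that $A \rotimes{\tau} L^*$ and $C^* \rotimes{\tau^*} M^*$ are semifree over $A$ and $C^*$ respectively (the condition on $C^*$ is derived from (p)/(n) on $C$ via degreewise duality), and $(\cdot)^*$ is exact, so this DG adjunction descends to derived categories upon choosing semifree resolutions.

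Next I would show that both unit and counit are quasi-isomorphisms. Theorem \ref{cor:inducedHomTwists}(3) gives $(C^* \rotimes{\tau^*} M^*)^* \cong C \avlotimes{\tau} M$, so
\[
A \rotimes{\tau} (C^* \rotimes{\tau^*} M^*)^* \cong A \rotimes{\tau} C \avlotimes{\tau} M \xra{\varepsilon^{ACM}} M
\]
is the natural semifree resolution from \ref{ch:naturalRes}. Acyclicity of $\tau^*$ (Theorem \ref{cor:inducedHomTwists}(4)) combined with Theorem \ref{cor:inducedHomTwists}(2) yields $(A \rotimes{\tau} L^*)^* \cong A^* \avlotimes{\tau^*} L$, so the other composition is the analogous resolution $C^* \rotimes{\tau^*} A^* \avlotimes{\tau^*} L \xra{\simeq} L$. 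The four identities of \eqref{eq:bggC3} then follow immediately: the two involving $k^*$ reduce to \eqref{eq:quismAugAkkC} (using $k^* = k$, $C^{**} \cong C$), while $A \rotimes{\tau} C \simeq k$ and $C^* \rotimes{\tau^*} A^* \simeq k$ come from the quasi-isomorphisms $\varepsilon^{ACk}$ and its $\tau^*$-analog, both provided by Theorem \ref{thm:quismAcy}(ii).

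The restrictions in \eqref{eq:bggC2} reduce to a thick-closure identification once the values in \eqref{eq:bggC3} are known. Since the equivalence is exact, the fact that it sends $A$ to $k$ and $k$ to $C^*$ implies that it carries $\dcatdf{}{perf}{A}\opp = \mathrm{thick}(A)\opp$ onto $\mathrm{thick}_{\dcatdf{f}{a}{C^*}}(k)$ and, dually, $\dcatdf{}{perf}{C^*}\opp$ onto $\mathrm{thick}_{\dcatdf{f}{a}{A}}(k)$. The main obstacle I anticipate is matching each of these thick closures with the corresponding $\dcatdf{f}{b}{-}$: the forward inclusion requires a devissage of an arbitrary degreewise-finite bounded DG module into triangles whose cones are shifts of $k$, proceeding by induction on total homology rank and exploiting the connectivity (p)/(n) to extract a bottom $k$-summand at each step; the reverse inclusion amounts to checking that the image of $A$ (resp.\ $C^*$) under the equivalence has finite bounded homology, which is precisely the content of $C^* \rotimes{\tau^*} A^* \simeq k$ (resp.\ its analog) in \eqref{eq:bggC3}.
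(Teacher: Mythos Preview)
Your approach is essentially the same as the paper's: the adjunction chain you write is exactly the one in the paper's Lemma~\ref{lem:factorizationM} (read in the opposite direction), the unit/counit quasi-isomorphisms are verified via the same identification with $\varepsilon^{ACM}$ and its $\tau^*$-analogue, and the values \eqref{eq:bggC3} are computed the same way.

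One point deserves tightening. Your justification for descent to derived categories---``$A\rotimes{\tau}L^*$ and $C^*\rotimes{\tau^*}M^*$ are semifree, and $(\cdot)^*$ is exact''---is not quite the right reason: semifreeness of the \emph{output} over $A$ does not by itself show that the functor preserves quasi-isomorphisms in the \emph{input} $L$. The paper argues instead that if $\HH(M)=0$ then $C^*\rotimes{\tau^*}M^*\cong(C\avlotimes{\tau}M)^*\cong((C\avlotimes{\tau}A)\otimes_A M)^*$ is acyclic because $C\avlotimes{\tau}A$ is semifree as a \emph{right} $A$-module (Lemma~\ref{lem:semifree}), so that $(\,?\,)\otimes_A M$ preserves acyclicity. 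The symmetric argument handles the other functor. This is an easy fix with the tools you already have on hand.

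For the identification $\mathrm{thick}(k)=\dcatdf{f}{b}{C^*}$ (and its $A$-counterpart) in \eqref{eq:bggC2}, the paper simply invokes \cite[6.4]{ABIM}; your sketched d\'evissage by induction on total homology rank, peeling off a copy of $k$ at the extremal nonzero degree using the connectivity hypotheses, is exactly the content of that reference and is a correct self-contained substitute.
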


In accordance with the notation in \ref{ch:subcat}, we write $\DGM{f}{a}{B}$ for the abelian category 
of left DG $B$-modules whose objects are the degreewise finite adequate DG modules.

   \begin{lemma}
    \label{lem:factorizationM}
For $A$ and $C$ as in Theorem \emph{\ref{thm:bggC}} there is an adjoint equivalence
  \begin{equation}
    \label{eq:factorizationM}
\xymatrixcolsep{3pc}
\xymatrixrowsep{2pc} 
\xymatrix{
C^*\rotimes{\tau^*}\mathrm{?}^*\col
\DGM{f}{a}{A}\opp
\ar@{->}[r]<1.1ex>_-{\equiv}
&\DGM{f}{a}{C^*}
\ar@{->}[l]<1ex>
}
:\!A\rotimes{\tau}\mathrm{?}^*
  \end{equation}
    \end{lemma}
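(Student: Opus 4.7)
The plan has three steps: (i) confirm that both functors preserve the stated subcategories, (ii) produce the adjunction isomorphism $\Hom_{C^*}(F M, L) \cong \Hom_{A\opp}(M, G L)$ where $F = C^* \rotimes{\tau^*} \mathrm{?}^*$ and $G = A \rotimes{\tau} \mathrm{?}^*$, and (iii) check that the unit and counit are quasi-isomorphisms (which is how I would interpret the symbol $\equiv$, since strict invertibility cannot hold — the unit will be $\varepsilon^{ACM}$ after identification). For (i), the conditions (p) or (n) together with degreewise finiteness of $A$, $C$, $M$, $L$ make $C^*$ and $M^*$ adequate in the opposite direction; their twisted tensor product is then again degreewise finite and adequate as a left DG $C^*$-module, and the symmetric check for $A \rotimes{\tau} L^*$ uses $L^{**} \cong L$ as a DG $C$-comodule.

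For step (ii), I would build the bijection as a three-fold composition. The adjunction in \ref{ch:twistAdj} gives
\[
\Hom_A(A \rotimes{\tau} L^*, M) \ \cong\ \Hom_C(L^*, C \avlotimes{\tau} M).
\]
Vector space duality, which under our finiteness hypotheses is a contravariant equivalence matching degreewise finite adequate DG $C$-comodules with DG $C^*$-modules (see \ref{ch:dualActions} and \ref{ch:dualActions2}), rewrites the right-hand side as $\Hom_{C^*}((C \avlotimes{\tau} M)^*, L^{**})$. Finally, Theorem \ref{cor:inducedHomTwists}(3) shows that $\varpi^{YM}_\tau\col C^* \rotimes{\tau^*} M^* \to (C \avlotimes{\tau} M)^*$ is bijective in the present finite range, and $L^{**} \cong L$, producing
\[
\Hom_A(A \rotimes{\tau} L^*, M) \ \cong\ \Hom_{C^*}(C^* \rotimes{\tau^*} M^*, L).
\]

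For step (iii), I would identify the unit at $M$ by tracing $\id$ on $F M$ backward through the three isomorphisms above. After substituting $(C^* \rotimes{\tau^*} M^*)^* \cong C \avlotimes{\tau} M$ (Theorem \ref{cor:inducedHomTwists}(3) in the reverse direction, using $C^{**} \cong C$ and $\tau^{**} = \tau$), the unit becomes the canonical map $\varepsilon^{ACM}\col A \rotimes{\tau} C \avlotimes{\tau} M \to M$ of \eqref{eq:quismAugACM}, which is a quasi-isomorphism by Theorem \ref{thm:quismAcy}(i) since $\tau$ is acyclic. Symmetrically, the counit at $L$ reduces to the analogous map $\varepsilon^{C^* A^* L}$ for the twisting $\tau^*\col A^* \to C^*$; by Theorem \ref{cor:inducedHomTwists}(4) $\tau^*$ is acyclic, so Theorem \ref{thm:quismAcy}(i) applied to $\tau^*$ makes this a quasi-isomorphism as well.

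The hard part will be step (ii): verifying that every identification respects module structures and not merely underlying complexes. In particular, the passage from $C$-colinear to $C^*$-linear maps under duality must be justified by Proposition \ref{prop:duality}, which translates convolution actions into algebra/coalgebra actions through $\delta$ and $\varpi$; combining this with the $\omega$-adjunction of \ref{ch:twistAdj} and the finiteness-sensitive identification of $\varpi^{YM}_\tau$ requires careful diagram-chasing to confirm that the resulting bijection is natural in both variables and compatible with the $A$-module structure on $\Hom_A(A \rotimes{\tau} L^*, M)$ and the $C^*$-module structure on $\Hom_{C^*}(C^* \rotimes{\tau^*} M^*, L)$.
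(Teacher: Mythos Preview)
Your step (ii) is exactly the paper's argument, only traversed in the opposite order: the paper starts from $\Hom_{C^*}(C^*\rotimes{\tau^*}M^*,L)$, applies duality to get $\Hom_C(L^*,(C^*\rotimes{\tau^*}M^*)^*)$, then Theorem~\ref{cor:inducedHomTwists}(3) to reach $\Hom_C(L^*,C\avlotimes{\tau}M)$, and finally the adjunction~\ref{ch:twistAdj} to land in $\Hom_A(A\rotimes{\tau}L^*,M)$. Passing to degree-zero cycles gives the Hom-set bijection. That is the entire proof in the paper.

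In particular, the paper's proof of this lemma stops at the adjunction and does \emph{not} carry out your step (iii). Your instinct that strict invertibility fails is correct; the paper simply does not verify here that the unit and counit are quasi-isomorphisms. That verification---identifying the unit with $\varepsilon^{ACM}$ via Theorem~\ref{cor:inducedHomTwists}(3) and the counit with $\varepsilon^{C^*A^*L}$, then invoking Theorem~\ref{thm:quismAcy} for $\tau$ and $\tau^*$---is deferred to the proof of Theorem~\ref{thm:bggC}, where it is done at the level of derived categories. So your proposal is correct and your step (iii) anticipates exactly what the paper does next, but for the lemma itself the paper records only the adjoint pair.
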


   \begin{proof}
For $M$ in $\DGM{f}{a}{A}$ and $L$ in $\DGM{f}{a}{C^*}$ there is a natural isomorphism of complexes 
$\Hom(C^*\rotimes{\tau^*}M^*,L)\cong\Hom(L^*,(C^*\rotimes{\tau^*}M^*)^*)$;  see \ref{ch:cxDual}(2).
Due to the definition of the action of $C^*$ in \ref{ch:dualActions}, it yields the first isomorphism in the string
  \begin{align*}
\Hom_{C^*}(C^*\rotimes{\tau^*}M^*,L)
&\cong\Hom_{C}(L^*,(C^*\rotimes{\tau^*}M^*)^*)\\
&\cong\Hom_{C}(L^*,C\avlotimes{\tau}M)\\
&\cong\Hom_{A}(A\rotimes{\tau}L^*,M)
  \end{align*}
The other two isomorphisms come from Theorem \ref{cor:inducedHomTwists}(3) and \ref{ch:twistAdj}.
Morphisms of DG modules are the degree zero cycles in the complex of homomorphisms, so we get
  \begin{align*}
\Hom_{\DGM{f}{a}{C^*}}(C^*\rotimes{\tau^*}M^*,L)
&\cong\Hom_{\DGM{f}{a}{A}}(A\rotimes{\tau}L^*,M)
  \end{align*}
The space on the right is $\Hom_{\DGM{f}{a}{A}\opp}(M,A\rotimes{\tau}L^*)$, whence the desired assertion.
  \end{proof}

  \begin{proof}[Proof of Theorem \emph{\ref{thm:bggC}}]
Let $M$ be a DG module in $\DGM{f}{a}{A}$.  If $\HH(M)=0$, then
  \[
\HH(C^*\rotimes{\tau^*}M^*)\cong\HH((C\avlotimes{\tau}M)^*)\cong(\HH(C\avlotimes{\tau}M))^*\cong(\HH((C\avlotimes{\tau}A)\otimes_AM))^*=0
  \]
where Theorem \ref{cor:inducedHomTwists}(3) gives the first isomorphism and the other are standard; the equality holds as 
$C\avlotimes{\tau}A$ is semifree by Lemma \ref{lem:semifree}, so $(C\avlotimes{\tau}A)\otimes_AM\simeq0$ by~\ref{ch:DGMhp}.  

It follows that $C^*\rotimes{\tau^*}?^*$  defines an exact functor 
$C^*\rotimes{\tau^*}\mathrm{?}^*\col\dcatdf{f}{a}{A}\opp\to\dcatdf{f}{a}{C^*}$.
An exact functor $(A\rotimes{\tau}\mathrm{?}^*)$ in the opposite direction is similarly obtained.
These functors form an adjoint pair because they are induced by such a pair.  

To prove that they are quasi-inverse we show that the adjunction unit and counit are isomorphisms.
The unit yields in $\dcatdf{f}{a}{A}\opp$ morphisms $M\to A\ltensor_{\tau}(C^*\rotimes{\tau^*}M^*)^*$
induced by the composed morphism of left DG $A$-modules
  \begin{gather*}
A\rotimes{\tau}(C^*\rotimes{\tau^*}M^*)^*
\xra{\,A\otimes((\varpi^{CM}_{\tau})^*)^{-1}\,} A\rotimes{\tau}C\avlotimes{\tau}M
\xra{\,\varepsilon^{ACM}\,}M
  \end{gather*}
with $\varpi^{CM}_{\tau}$ from Theorem \ref{cor:inducedHomTwists}(3) and $\varepsilon^{ACM}$ 
from~\eqref{eq:quismAugACM}.  As $\tau$ is acyclic $\varepsilon^{ACM}$ is a quasi-isomorphism 
by Theorem \ref{thm:quismAcy}, so the unit is an isomorphism.  

The adjunction counit yields a morphism $C^*\rotimes{\tau^*}(A\ltensor_{\tau}L^*)^*\to L$  in 
$\dcatdf{f}{a}{C^*}$, which is induced by the composed morphism of left DG $C^*$-modules
  \begin{gather*}
C^*\rotimes{\tau^*}(A\rotimes{\tau}L^*)^*
\xra{\,C^*\otimes((\varpi^{AL^*}_{\tau})^*)^{-1}\,} C^*\rotimes{\tau^*}A^*\avlotimes{\tau^*}L
\xra{\,\varepsilon^{C^*A^*L}\,}L
  \end{gather*}
Here $\varpi^{AL^*}_{\tau}$ is given by Theorem \ref{cor:inducedHomTwists}(2) applied with $X=L{}^*$,
and $\varepsilon^{C^*A^*L}$ by \eqref{eq:quismAugACM} applied with $C^*$ in place of $A$ and $A^*$ 
in place of $C$. Since $\tau^*$ is acyclic, $\varepsilon^{C^*A^*L}$ is a quasi-isomorphism by Theorem 
\ref{thm:quismAcy}, and thus the counit is an isomorphism.  

Note next that $k\simeq A\rotimes{\tau}C$ holds in $\dcatdf{f}{a}{A}\opp$ because $\tau$ is acyclic.  
We get 
  \[
C^*\rotimes{\tau^*}k\simeq C^*\rotimes{\tau^*}(A\rotimes{\tau}C)^*\simeq 
C^*\rotimes{\tau^*}A^*\avlotimes{\tau^*}C^*\simeq C^*
  \]
by invoking Theorems \ref{cor:inducedHomTwists}(2) and \ref{thm:quismAcy} for the last two isomorphisms.  
On the other hand, since $A$ is semifree we have $C^*\rotimes{\tau^*}A\simeq C^*\rotimes{\tau^*}A^*\simeq k$,
using Theorem \ref{thm:quismAcy} once again.  Similar computations give $A\rotimes{\tau} k\simeq A$ and 
$A\rotimes{\tau}C^*\simeq k$.

These isomorphisms imply that $C^*\rotimes{\tau^*}\mathrm{?}$ and $A\rotimes{\tau}\mathrm{?}$ 
restrict to an equivalence of the thick subcategory of $\dcatdf{f}{a}{C^*}$ generated by $k$ with the 
thick subcategory of $\dcatdf{f}{f}{A}$ generated by $A$.  The latter is $\dcatdf{}{perf}{A}$ by definition, 
while the former equals $\dcatdf{f}{a}{C^*}$ by \cite[6.4]{ABIM}.  The other equivalence in 
\eqref{eq:bggC2} holds by symmetry.
  \end{proof}

    \section{Composition products}
      \label{CompositionProducts}

Composition products of derived Hom functors are chain level maps that induce Yoneda 
products in cohomology.  The material in this section is basic and known, but it is spread
out over sources adopting different sets of hypotheses.  

The discussion is geared towards applications to two distinct goals.  The first one is 
to translate Moore duality into Koszul duality for adequate degreewise finite DG modules 
over degreewise finite DG algebras.  The second is to replace both finiteness conditions 
on the objects by the respective conditions on their homology.
 
   \begin{notation}
    \label{ch:notation15}
In this section $A$ and $B$ denote DG algebras and ${}_Ak$ is a fixed left DG $A$-module.
The general assumptions and notation from \ref{ch:notation1} are in force. 

The triangulated categories used below are described in \ref{ch:subcat}.
  \end{notation}

 \begin{bfchunk}{Morphisms.}
   \label{ch:morph}
A morphism  $\alpha\col A\to B$ of DG algebras yields an adjoint pair 
\begin{equation}
    \label{eq:morph1}
\xymatrixcolsep{2pc}
\xymatrixrowsep{2pc} 
\mathsf{L}^{\alpha}\col
\xymatrix{
\dcatdf{}{}{A}
\ar@{->}[r]<.5ex>
&\dcatdf{}{}{B}
\ar@{->}[l]<.5ex>
}
:\!\mathsf{R}^{\alpha}
  \end{equation}
of exact functors described as follows: $\mathsf{R}^{\alpha}$ is the forgetful functor that replaces any action 
of $B$ by the action of $A$ obtained through $\alpha$, and $\mathsf{L}^{\alpha}(\mathrm?)$ is the derived 
tensor product $B\ltensor_A\mathrm{?}$, defined on objects by the assignment 
$L\mapsto L'\otimes_B\res BL$.

When $\alpha$ is a quasi-isomorphism these functors are quasi-inverse equivalences.

Indeed, recall from \ref{ch:resolutions} that a semifree resolution $\epsilon^M\col\res{}M\xra{\simeq}M$ 
has been chosen for every DG $A$-module $M$ and that $\mathsf{L}^{\alpha}(M)=B\otimes_A\res{}M$.
Thus, the unit of the adjunction is the homotopy class of the map $\alpha\otimes\res{}M$, which is
a quasi-isomorphism because $\alpha$ is one and $\res{}M$ is semifree.  The counit
is induced by the map $B\otimes_A\res{}{M}\to M$ given by $(b\otimes m)\mapsto b\epsilon^M(m)$;
it is a quasi-isomorphism because its precomposition with $\alpha\otimes_A\res{}{M}$ 
equals $A\otimes_A\res{}{M}=\res{}{M}\xra\simeq M$.

These quasi-isomorphisms also imply that the functors restrict to equivalences
   \begin{equation}
    \label{eq:morph2}
\dcatdf{hf}{ha}{A}\equiv\dcatdf{hf}{ha}{B}\qquad
\dcatdf{hf}{hb}{A}\equiv\dcatdf{hf}{hb}{B}\qquad
\dcatdf{}{perf}{A}\equiv\dcatdf{}{perf}{B}
   \end{equation}
 \end{bfchunk}
 
   \begin{bfchunk}{Formal DG algebras.}
    \label{ch:formal}
The DG algebra $A$ is said to be \emph{formal} if there exists a string of quasi-isomorphisms of DG 
algebras linking $A$ and $\HH(A)$.  When $A$ is formal the equivalences \eqref{eq:morph2} hold 
with $\HH(A)$ in place of $B$.
  \end{bfchunk}

  \begin{bfchunk}{Derived homomorphism functors.}
    \label{ch:derivedComposition}
An exact functor
  \begin{align*}
\rhom_A(\text{\rm>}\,,{\mathrm?})&\col\dcatdf{}{}{A}\opp\times\dcatdf{}{}{A}\to\dcatdf{}{}{k}
  \end{align*}
is defined by $(M,{}_Ak)\mapsto\Hom_A(\res{}M,\res{}{{}_Ak})$, and one to graded $k$-spaces by
  \[
\Ext_A(M,{}_Ak)=\HH(\rhom_A(M,{}_Ak))
  \]

Set $E=\End_A(\res {}{{}_Ak})$.  Composition of homomorphisms turns $E$ into a DG algebra and gives 
$\rhom_A(M,{{}_Ak})$ a structure of left DG $E$-module that is natural in $M$:  Each morphism of DG 
$A$-modules $M\to M'$ defines a unique up to homotopy morphism $\res{}M\to\res{}{M'}$ of DG $A$-modules, 
so the induced morphism of complexes $ \Hom_A(\res{}{M'},\res{}{{}_Ak})\to \Hom_A(\res{}M,\res{}{{}_Ak})$
is unique up to homotopy.  It is also $E$-linear, hence the assignment $M\mapsto\rhom_A(M,{{}_Ak})$ 
yields an exact functor
  \[
\rhom_A(\mathrm{?},{{}_Ak})\col\dcatdf{}{}A\opp\to\dcatdf{}{}{\rhom_A({}_Ak,{}_Ak)}
  \]
      \end{bfchunk}

Next we show that other choices of resolution of ${}_Ak$ produce comparable results.  

   \begin{lemma} 
      \label{ch:CompositionM}
Let $F'\simeq {}_Ak$ be a semifree resolution and set $E'=\End_A(F')$.

There exists a sequence of quasi-isomorphisms of DG algebras linking $E$ and $E'$.  
It induces an exact equivalence $\dcatdf{}{}{E}\equiv\dcatdf{}{}{E'}$
that restricts to equivalences
   \begin{align}
    \label{eq:CompositionM}
\dcatdf{hf}{ha}{E}\equiv\dcatdf{hf}{ha}{E'}\qquad
\dcatdf{hf}{hb}{E}\equiv\dcatdf{hf}{ha}{E'}\qquad
 \dcatdf{}{perf}{E}\equiv\dcatdf{}{perf}{E'}
 \end{align}
    \end{lemma}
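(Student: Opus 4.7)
My plan has two components: constructing a zigzag of honest DG algebra quasi-isomorphisms between $E$ and $E'$, and then transferring this zigzag to the derived categories via the machinery already set up in \ref{ch:morph}.

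For the first component, I begin by observing that $\res B{{}_Ak}$ and $F'$ are both semifree resolutions of the same DG $A$-module ${}_Ak$. By the lifting property of semifree DG modules recorded in \ref{ch:DGMhp}, the identity of ${}_Ak$ lifts to a morphism $f \colon \res B{{}_Ak} \to F'$ of DG $A$-modules that is unique up to homotopy and is a quasi-isomorphism, and since both source and target are semifree, $f$ is in fact a homotopy equivalence with homotopy inverse $g \colon F' \to \res B{{}_Ak}$. To leverage $f$ into a zigzag of strict unital DG algebra quasi-isomorphisms, I form the DG $A$-module $P = \res B{{}_Ak} \oplus F'$ and its endomorphism DG algebra $\widetilde E = \End_A(P)$. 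Using the orthogonal idempotents corresponding to the two summands together with the homotopy equivalence $f$, one exhibits intermediate DG algebras receiving unit-preserving DG algebra quasi-isomorphisms to and from both $E$ and $E'$; concretely, comparison with the corner subalgebras $E$ and $E'$ of $\widetilde E$ reduces to verifying that certain composition maps are quasi-isomorphisms, which follows because $f$ is a homotopy equivalence so the induced maps on Hom-complexes are homotopy equivalences. This is an instance of the standard Morita invariance of endomorphism DG algebras for quasi-isomorphic semifree (equivalently, h-projective) objects in a pretriangulated DG category.

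For the second component, I apply \ref{ch:morph} to each arrow $\alpha \colon B_1 \xra{\simeq} B_2$ in the zigzag: the adjoint pair $(\mathsf L^\alpha, \mathsf R^\alpha)$ of \eqref{eq:morph1} is an exact equivalence $\dcatdf{}{}{B_1} \equiv \dcatdf{}{}{B_2}$ by the argument already presented there, and by \eqref{eq:morph2} it further restricts to exact equivalences on $\dcatdf{hf}{ha}{-}$, $\dcatdf{hf}{hb}{-}$, and $\dcatdf{}{perf}{-}$. Composing the finitely many such equivalences along the zigzag yields the three equivalences displayed in \eqref{eq:CompositionM}.

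The main obstacle is genuinely the first component. The naive candidate for a comparison map, $E \to E'$ defined by $a \mapsto f a g$, is a quasi-isomorphism of complexes (since $f$ is a homotopy equivalence) but fails to be multiplicative on the nose: we have $(fag)(fbg) = fa(gf)bg$, which equals $fabg$ only up to the homotopy $gf \sim \id$. Thus one cannot avoid passing through the auxiliary DG algebra built from $P$, and the bulk of the work is checking that the comparison maps out of it are indeed strict DG algebra morphisms and quasi-isomorphisms.
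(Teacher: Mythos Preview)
Your overall plan coincides with the paper's: construct a zigzag of DG algebra quasi-isomorphisms linking $E$ and $E'$, then apply \ref{ch:morph} and \eqref{eq:morph2} to each arrow. The divergence is in the auxiliary module. You take the naive direct sum $P=\res{}{{}_Ak}\oplus F'$ and then defer to ``standard Morita invariance of endomorphism DG algebras,'' but you never actually write down the intermediate DG algebras or the unital DG algebra maps in the zigzag. As you yourself observe, the obvious candidates fail: the map $a\mapsto fag$ is not multiplicative, and the corner inclusions $E\hookrightarrow\End_A(P)$, $E'\hookrightarrow\End_A(P)$ are not unital. So the phrase ``one exhibits intermediate DG algebras receiving unit-preserving DG algebra quasi-isomorphisms'' is precisely the content of the lemma and remains unproved in your write-up.

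The paper makes a different choice of $P$ that closes this gap. It sets $F=\res{}{{}_Ak}$, picks a homotopy equivalence $\phi\colon F\to F'$, and defines a cylinder-type module by $P^\natural=(F\oplus\Sigma^{-1}F'\oplus F')^\natural$ with
\[
\dd^P(f,f'',f')=(\dd^F(f),\,\phi(f)-\dd^{F'}(f'')+f',\,\dd^{F'}(f'))\,.
\]
The payoff is that the canonical projections $P\to F$ and $P\to F'$ are \emph{surjective} quasi-isomorphisms onto semifree modules, so each one splits; thus $P\cong F\oplus G$ with $G$ a mapping cone, hence contractible. The idempotent $\epsilon\colon P\to F\to P$ then gives a surjection $\End_A(P)\twoheadrightarrow\End_A(F)$ via $\alpha\mapsto\epsilon\alpha\epsilon$, whose kernel is the sum of $\Hom_A(F,G)$, $\Hom_A(G,G)$, $\Hom_A(G,F)$, all acyclic because $G$ is contractible and $F$ is semifree. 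This produces the explicit zigzag $E\xleftarrow{\simeq}\End_A(P)\xrightarrow{\simeq}E'$ of DG algebras that your argument only gestures at. If you want to keep your direct-sum $P$, you must supply an analogous explicit construction of the zigzag rather than invoke Morita invariance as a black box.
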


   \begin{proof}
In view of \ref{ch:morph}, it suffices to prove the first assertion.  Set $F=\res{}{{}_Ak}$.  

Since both $F$ and $F'$ are semifree resolutions of ${}_Ak$, there is a homotopy equivalence
$\phi\col F\to F'$.  Define a DG $A$-module $P$ by $P\nat=(F\oplus \shift^{-1}F'\oplus F')\nat$ and 
  \[
\dd^P(f,f'',f')=(\dd^{F}(f),\phi(f)-\dd^{F'}(f'')+f',\dd^{F'}(f'))
  \]
Note that $P$ is semifree and the canonical maps $P\to F$ and $P\to F'$ are surjective morphisms of DG 
modules.  Their kernels are isomorphic to the mapping cones of $\shift^{-1}(\id^{F'})$ and $\shift^{-1}(\phi)$, 
respectively, so both maps are quasi-isomorphisms.  

By symmetry, it suffices to deal with $P\to F$.  Being a surjective quasi-isomorphism onto a semifree DG module
it has a right inverse, so we may assume $P=F\oplus G$ with a DG module $G$ quasi-isomorphic to $0$.  
The composed morphism $\epsilon\col P\to F\to P$ is an idempotent in $\End_A(P)$, so 
$\alpha\mapsto\epsilon\alpha\epsilon$ is a surjective morphism of DG algebras with image isomorphic 
to $\End_A(F)$. It is a quasi-isomorphism, as its kernel is a direct sum of the acyclic 
complexes $\Hom_A(F,G)$, $\Hom_A(G,G)$, and $\Hom_A(G,F)$.
  \end{proof}

Next we look at the functoriality of derived composition products.

 \begin{bfchunk}{Naturality.}
    \label{ch:forgetFun2}
Let $\alpha\col A\to B$ be a morphism of DG algebras and set
  \begin{equation}
    \label{eq:forgetFunSet}
\mathsf{E}(A)=\Hom_{A}({}_Ak,{}_Ak)
\quad\text{and}\quad
\mathsf{E}(B)=\Hom_{B}({}_Bk,{}_Bk)
  \end{equation}
where ${}_Bk\coloneqq B\otimes_A\res{}{{}_Ak}$.
The map $\phi\mapsto B\otimes_A\phi$ is a morphism of DG algebras
  \begin{equation}
    \label{eq:forgetFun2}
\mathsf{E}(\alpha)\col \mathsf{E}(A)\to \mathsf{E}(B)
  \end{equation}
and an $\mathsf{E}(\alpha)$-equivariant morphism of left DG modules
  \begin{equation}
    \label{eq:forgetFun3}
\alpha^M\col\Hom_{A}(\res{}M,{}_Ak)\to\Hom_{B}(B\otimes_{A}\res{}M,{}_Bk)
  \end{equation}
As $B\otimes_{A}\res{}M$ is semifree over $B$, the last map defines a natural transformation
  \begin{equation}
    \label{eq:forgetFun4}
\rhom_{A}(\mathrm{?},{{}_Ak})\to\mathsf{R}^{\mathsf{E}(\alpha)}\rhom_{B}(\mathsf{L}^{\alpha}(\mathrm{?}),{}_Bk)
  \end{equation}
of exact functors $\dcatdf{}{}{A}\to\dcatdf{}{}{\mathsf{E}(B)}$.
  \end{bfchunk}

  \begin{lemma} 
    \label{lem:exactFun}
If $\alpha\col A\to B$ is a quasi-isomorphism of DG algebras, then so is the map
$\mathsf{E}(\alpha)$ in \eqref{eq:forgetFun2}. It induces adjoint quasi-inverse equivalences
\begin{equation}
    \label{eq:exactFun1}
\xymatrixcolsep{3pc}
\xymatrixrowsep{2pc} 
\mathsf{L}^{\mathsf{E}(\alpha)}\col
\xymatrix{
\dcatdf{}{}{\mathsf{E}(A)}
\ar@{->}[r]<1.1ex>_-{\equiv}
&\dcatdf{}{}{\mathsf{E}(B)}
\ar@{->}[l]<1ex>
}
:\!\mathsf{R}^{\mathsf{E}(\alpha)}
   \end{equation}
that restrict to equivalences
\begin{equation}
    \label{eq:exactFun2}
  \begin{gathered}
\dcatdf{hf}{ha}{\mathsf{E}(A)}\equiv\dcatdf{hf}{ha}{\mathsf{E}(B)}\\
\dcatdf{hf}{hb}{\mathsf{E}(A)}\equiv\dcatdf{hf}{hb}{\mathsf{E}(B)}\qquad
\dcatdf{}{perf}{\mathsf{E}(A)}\equiv\dcatdf{}{perf}{\mathsf{E}(B)}
  \end{gathered}
 \end{equation}
Furthermore, the natural transformation \eqref{eq:forgetFun4} is a natural isomorphism.
  \end{lemma}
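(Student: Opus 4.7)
The plan proceeds in three steps: first, prove that for every DG $A$-module $M$ the map $\alpha^M$ from \eqref{eq:forgetFun3} is a quasi-isomorphism of complexes, equivariant with respect to $\mathsf{E}(\alpha)$; second, specialize to $M={}_Ak$ to conclude $\mathsf{E}(\alpha)$ is a quasi-isomorphism of DG algebras; third, invoke \ref{ch:morph} applied to $\mathsf{E}(\alpha)$ to obtain the equivalences \eqref{eq:exactFun1} and their restrictions \eqref{eq:exactFun2}. The final claim about \eqref{eq:forgetFun4} then reduces to step one.

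For step one I would factor $\alpha^M$ through the tensor--hom adjunction. Under the natural isomorphism $\Hom_B(B\otimes_A\res{}M,{}_Bk)\cong\Hom_A(\res{}M,{}_Bk)$, where ${}_Bk$ is viewed as a DG $A$-module by restriction along $\alpha$, the map $\alpha^M$ corresponds to post-composition with the canonical map $\iota\col \res{}{{}_Ak}\to B\otimes_A\res{}{{}_Ak}={}_Bk$ sending $f\mapsto 1\otimes f$. Since $\iota$ equals $\alpha\otimes_A\res{}{{}_Ak}$ and $\res{}{{}_Ak}$ is semifree as a left DG $A$-module, the functor ${?\otimes_A\res{}{{}_Ak}}$ preserves quasi-isomorphisms (see \ref{ch:DGMhp}); thus $\iota$ is a quasi-isomorphism. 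Semifreeness of $\res{}M$ then ensures $\Hom_A(\res{}M,?)$ preserves quasi-isomorphisms, so $\alpha^M$ is a quasi-isomorphism. The $\mathsf{E}(\alpha)$-equivariance is immediate from the definition $\phi\mapsto B\otimes_A\phi$ and the bifunctoriality of $?\otimes_A?$.

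Step two is now the observation that $\mathsf{E}(\alpha)=\alpha^{{}_Ak}$ as maps of complexes, and both are multiplicative, so step one delivers the desired quasi-isomorphism of DG algebras. For step three, applying \ref{ch:morph} to $\mathsf{E}(\alpha)$ directly yields the adjoint equivalence \eqref{eq:exactFun1}, and the restrictions \eqref{eq:exactFun2} are the content of \eqref{eq:morph2}. Finally, step one shows that $\alpha^M$ is an $\mathsf{E}(\alpha)$-equivariant quasi-isomorphism of left DG $\mathsf{E}(A)$-modules, which is precisely the statement that \eqref{eq:forgetFun4} is a natural isomorphism after applying $\mathsf{R}^{\mathsf{E}(\alpha)}$.

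I anticipate no substantive obstacle beyond bookkeeping for the various DG module structures at play (the left $\mathsf{E}(A)$-action by pre-composition, its transport to $\mathsf{E}(B)$ via $\mathsf{E}(\alpha)$, and the action on $B\otimes_A(-)$). The only nontrivial ingredients are already packaged into \ref{ch:morph} and into the elementary principle that tensoring a quasi-isomorphism with a semifree DG module yields a quasi-isomorphism; everything else is formal adjunction.
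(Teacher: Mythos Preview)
Your proposal is correct and follows essentially the same argument as the paper: both factor $\alpha^M$ through the tensor--hom adjunction to identify it with $\Hom_A(\res{}M,\alpha\otimes_A\res{}{{}_Ak})$, use semifreeness of $\res{}{{}_Ak}$ and $\res{}M$ to see this is a quasi-isomorphism, then specialize to $M={}_Ak$ and invoke \ref{ch:morph}. One small terminological slip: the left $\mathsf{E}(A)$-action on $\Hom_A(\res{}M,\res{}{{}_Ak})$ is by post-composition, not pre-composition, but this does not affect the argument.
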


  \begin{proof}
The morphism $\alpha^M $ from \eqref{eq:forgetFun3} composed with the isomorphism
 \[
\Hom_{B}(B\otimes_A\res{}M,B\otimes_A\res{}{{}_Ak})\xra{\cong}\Hom_{A}(\res{}M,B\otimes_A\res{}{{}_Ak})
  \]
equals $\Hom_{A}(\res{}M,\alpha\otimes_A\res{}{{}_Ak})$.  The latter is a quasi-isomorphism as $\alpha$ 
is one and both $\res{}{{}_Ak}$ and $\res{}M$ are semifree.  Thus, $\alpha^M$ is a quasi-isomorphism.

In other words, \eqref{eq:forgetFun4} is a natural isomorphism. Applied to $M={}_Ak$ it shows that $\mathsf{E}(\alpha)$ 
is a quasi-isomorphism.  This implies the desired equivalences, see \ref{ch:morph}.
  \end{proof}

We finish with a fact that depends on the properties of the DG algebra in play. 

 \begin{bfchunk}{Controlled resolutions.}
    \label{lem:extensions}
Assume that $A$ is augmented, degreewise finite, and satifies $\ov A_{\les0}=0$ or $\ov A_{\ges-1}=0$.
Every DG module $M$ in $\dcatdf{hf}{ha}A$ then admits a semi-free resolution 
$\mathsf{F^f}({M})\simeq {M}$ with $\mathsf{F^f}({M})\in\dcatdf{hf}{ha}A$.  

Indeed, such a resolution can be obtained by mimicking the inductive construction
of a free resolution of a bounded below complex over a noetherian ring, where an 
existing partial resolution is modified by additions of free modules of finite rank, 
and at most two modifications are needed in any given degree.

Thus, the assignment ${M}\mapsto\mathsf{F}^\mathsf{f}_A({M})$ yields an 
exact functor $\mathsf{F}^\mathsf{f}_A\col\dcatdf{hf}{ha}{A}\to\dcatdf{f}{a}{A}$
that is quasi-inverse to the full embedding $\dcatdf{f}{a}{A}\subset\dcatdf{hf}{ha}{A}$.  
    \end{bfchunk}

\section{Koszul duality}
  \label{KoszulDuality}

In this section we prove the theorem stated in the introduction.

  \begin{theorem}
   \label{thm:bgg}
Let $B$ be a DG algebra such that $\HH(B)$ is degreewise finite, augmented, and satisfies the condition
  \[
\text{\rm(hp)}\quad
\ov{\HH(B)}_{\les0}=0 
\qquad\text{respectively}\qquad
\text{\rm(hn)}\quad 
\ov{\HH(B)}_{\ges-1}=0
  \]

There exists then a DG $B$-module $K$ with $\rank_k\HH(K)=1$, and any DG module with this
property is isomorphic in $\dcatdf{}{}{B}$ to some shift of $K$.

The exact functor $\rhom_B(?,K)$ restricts to an exact equivalence
  \begin{equation}
    \label{eq:bgg1}
\xymatrixcolsep{1.5pc}
\xymatrixrowsep{2pc} 
\xymatrix{
\rhom_B(?,K)\col \dcatdf{hf}{ha}{B}\opp
\ar@{->}[r]^-{\equiv}
&\dcatdf{hf}{ha}{\mathsf{E}(B)}
}
  \end{equation}
where $\mathsf{E}(B)={\rhom_B(K,K)}$, and further restricts to exact equivalences
  \begin{equation}
    \label{eq:bgg2}
\xymatrixcolsep{1.5pc}
\xymatrixrowsep{2pc} 
\xymatrix{
\dcatdf{hf}{hb}{B}\opp
\ar@{->}[r]^-{\equiv}
&\dcatdf{}{perf}{\mathsf{E}(B)}
}
\quad\text{and}\quad 
\xymatrix{
\dcatdf{}{perf}{B}\opp
\ar@{->}[r]^-{\equiv}
&\dcatdf{hf}{hb}{\mathsf{E}(B)}
}
  \end{equation}
   \end{theorem}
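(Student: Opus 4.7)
The plan is to reduce Theorem \ref{thm:bgg} to Theorem \ref{thm:bggC} combined with \ref{lem:extensions}. The strategy has three stages: replace $B$ with a quasi-isomorphic DG algebra $A$ satisfying (p) or (n) strictly; identify $\mathsf{E}(A)$ with $C^*$ for $C = \babar{A}$; and promote the Moore-duality equivalence from adequate objects to those with degreewise finite homology.

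First, because $\HH(B)$ is degreewise finite, augmented, and satisfies (hp) or (hn), I would construct an augmented, degreewise finite DG algebra $A$ satisfying (p) respectively (n), together with a quasi-isomorphism (or zig-zag) $\alpha\col A \to B$. In the simply-connected case (n) this is a classical Sullivan-type minimal model; the (p) case is analogous by semifree induction on degree. By \ref{ch:morph} and Lemma \ref{lem:exactFun}, the equivalences for $A$ transfer along $\alpha$, so I would assume $B = A$. Existence of $K$ is then trivial: $K = k$ with the augmentation action has $\HH(K)$ of rank one. Uniqueness up to shift: given $K'$ with $\HH(K')$ one-dimensional in some degree $n$, a standard induction (cf.\ \cite{DGI}) using the filtration of $A$ by degree produces an isomorphism $\shift^n k \simeq K'$ in $\dcatdf{}{}{A}$.

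Now set $C = \babar{A}$ and $\tau = \tau^A\col C \to A$, acyclic by \ref{ch:Bar}(4) and with acyclic dual $\tau^*$ by Theorem \ref{cor:inducedHomTwists}(4). The DG $A$-module $F := A \rotimes{\tau} C$ is a functorial semifree resolution of $k$ by \ref{ch:naturalRes}, so $\mathsf{E}(A) = \End_A(F)$ can be computed directly. Adjunction \ref{ch:twistAdj} together with \eqref{eq:quismAugAkkC} identifies $\Hom_A(F, k) \cong C^*$ as complexes; the DG-algebra structure on $C^*$ from \ref{ch:dualActions} matches composition on $\End_A(F)$ via the morphism $\sigma^{AC^*}$ of Proposition \ref{prop:morphisms}. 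For any $M$ in $\dcatdf{f}{a}{A}$, the analogous calculation using Theorem \ref{cor:inducedHomTwists}(3) gives
\[
\rhom_A(M, k) \cong \Hom_A(A \rotimes{\tau} C \avlotimes{\tau} M, k) \cong (C \avlotimes{\tau} M)^* \cong C^* \rotimes{\tau^*} M^*
\]
so the functor $\rhom_A(?, k)$ coincides, under the identification $\mathsf{E}(A) \simeq C^*$, with the functor $C^* \rotimes{\tau^*} ?^*$ of Theorem \ref{thm:bggC}. That theorem produces the equivalence $\dcatdf{f}{a}{A}\opp \equiv \dcatdf{f}{a}{C^*}$, its restrictions \eqref{eq:bggC2}, and the values \eqref{eq:bggC3} pinning down the images of $A$ and $k$.

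Finally, I would extend from adequate to homologically finite objects using \ref{lem:extensions}: the controlled resolution functor $\mathsf{F}^\mathsf{f}$ (applied to both $A$ and $C^*$, since $C^*$ inherits the necessary bounds from $C$) is quasi-inverse to the inclusion of adequate objects into their homologically bounded counterparts. Composing gives \eqref{eq:bgg1}; the restrictions in \eqref{eq:bgg2} follow because the equivalence is exact and preserves the generators $A$ and $k$ of the perfect subcategories, as recorded in \eqref{eq:bggC3}. The main obstacle I expect is the DG-algebra identification $\mathsf{E}(A) \simeq C^*$: as complexes it follows from a formal adjunction, but matching the multiplicative structures requires careful bookkeeping through the convolution/cap-product formalism of Section \ref{S:Cup products and cap products} and the sign conventions of Section \ref{Duals of DG coalgebras}. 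The reduction and the extension steps are, by comparison, routine.
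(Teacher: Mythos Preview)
Your proposal follows essentially the same route as the paper: reduce to a degreewise finite augmented $A$ via a model $\alpha\col A\to B$ (this is \ref{ch:models}), take $C=\babar{A}$ with $\tau=\tau^A$, identify $\rhom_A(?,k)$ with $C^*\rotimes{\tau^*}?^*$ on $\dcatdf{f}{a}{A}$, invoke Theorem~\ref{thm:bggC}, and pass to $\dcatdf{hf}{ha}$ via \ref{lem:extensions}. The paper organizes this as the commutative diagram \eqref{eq:diagram}.

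One concrete correction: the morphism $\sigma^{AC^*}\col A\otimes C^*\to\Xi_{CA}$ of Proposition~\ref{prop:morphisms} is not the map that compares $C^*$ with $\End_A(F)$; it lands in the convolution algebra, not in $\End_A(F)$. The tool you actually need (and which the paper supplies for exactly this purpose) is Lemma~\ref{lem:dualCompositionA}: it builds a DG algebra quasi-isomorphism $\varkappa^C\col C^*\to\End_A(F)$ by $\xi\mapsto A\otimes\omega^{kC}(\xi)$, and a $\varkappa^C$-equivariant quasi-isomorphism $\varkappa^{CM}\col(C\avlotimes{\tau}M)^*\to\Hom_A(F\avlotimes{\tau}M,F)$. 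Your displayed chain $\rhom_A(M,k)\cong(C\avlotimes{\tau}M)^*\cong C^*\rotimes{\tau^*}M^*$ is precisely what that lemma packages, so you have the right computation but are pointing at the wrong proposition. With $\varkappa^C$ in hand the ``main obstacle'' you flag dissolves: multiplicativity is checked in \ref{ch:dualComposition}, and the quasi-isomorphism is the commutative square in the proof of Lemma~\ref{lem:dualCompositionA}.
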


A right adjoint of the functor \eqref{eq:bgg1} can be read off the proof; see Remark \ref{rem:right}.

The hypotheses of the theorem and the additional structures introduced in preparation for the proof are 
kept throughout the section.  We start with the existence and uniqueness of $K$.  It is due to Dwyer, 
Greenlees and Iyengar~\cite[3.2, 3.3, 3.9]{DGI}, but in the proof of the theorem we use the specific
form described Lemma \ref{lem:simple} 

   \begin{bfchunk}{Augmented models.}
      \label{ch:models}
There exists a quasi-isomorphism $\alpha\col A\to B$ of DG algebras such that $A$ is augmented, 
degreewise finite, and satisfies
 \begin{equation}
\text{\rm(p)}\quad
\ov{A}_{\les0}=0
\qquad\text{respectively}\qquad
\text{\rm(n)}\quad 
\ov{A}_{\ges-1}=0
  \end{equation}
See Lemaire \cite[4.2.4]{Le}, respectively, F\'elix, Halperin and Thomas~\cite[4.2]{FHT1}.  

Recall from \ref{ch:AugA} that there exists a unique augmentation $\varepsilon^A\col A\to k$.  
  \end{bfchunk}

  \begin{lemma}
    \label{lem:simple}
A DG $B$-module $K$ has $\rank_k\HH(K)=1$ if and only if there exist an integer $j$ and an isomorphism
$K\simeq\shift^j(B\ltensor_Ak)$ in $\dcatdf{}{}B$ .
    \end{lemma}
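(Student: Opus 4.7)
The plan is to treat the two directions separately, using the quasi-isomorphism $\alpha\col A\to B$ of \ref{ch:models} to move the problem to the augmented, degreewise finite DG algebra $A$, where the connectivity hypothesis (p) or (n) can be applied directly.

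For the ``if'' direction, fix a semifree resolution $F\xra{\simeq}{}_Ak$ over $A$, so that $B\ltensor_A k$ is modeled by $B\otimes_A F$. The induced map $\alpha\otimes_A F\col F=A\otimes_A F\to B\otimes_A F$ is a quasi-isomorphism since $\alpha$ is one and $F$ is semifree (compare \ref{ch:morph}), so $\rank_k\HH(B\ltensor_A k)=\rank_k\HH({}_Ak)=1$, and shifting by $\shift^j$ preserves this count.

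For the ``only if'' direction, invoke the quasi-inverse equivalences $\mathsf{L}^{\alpha}\dashv\mathsf{R}^{\alpha}\col\dcatdf{}{}{A}\equiv\dcatdf{}{}{B}$ from \ref{ch:morph}, under which $B\ltensor_A k=\mathsf{L}^{\alpha}({}_Ak)$ corresponds to the augmentation module ${}_Ak$. The problem thus reduces to: every $M\in\dcatdf{}{}{A}$ with $\rank_k\HH(M)=1$ concentrated in some degree $j$ satisfies $M\simeq\shift^j{}_Ak$ in $\dcatdf{}{}{A}$. After shifting, assume $j=0$. By \ref{lem:extensions}, replace $M$ by a semifree $F\in\dcatdf{f}{a}{A}$ with $F\simeq M$, and consider both $F$ and the functorial bar resolution $A\rotimes{\tau^A}\babar A\xra{\simeq}{}_Ak$ obtained from \eqref{eq:quismAugACM} applied to ${}_Ak$. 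The plan is to construct a morphism between these two semifree DG $A$-modules that induces an isomorphism on homology.

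The main obstacle is the existence of such a morphism. A naive attempt---pick a cycle $\tilde\mu\in F_0$ representing $\HH_0(F)$ and take the $A$-linear map $a\mapsto a\tilde\mu$---fails to be a quasi-isomorphism because $\HH_{\ne0}(A)$ typically does not vanish, so the mapping cone picks up extra homology coming from the higher-degree cycles of $A$. Resolving this obstruction uses the connectivity hypothesis (p) or (n) on $\ov A$ to set up an inductive comparison along the bar filtration of $A\rotimes{\tau^A}\babar A\avlotimes{\tau^A}M$, with the vanishing of $\HH_{\ne0}(F)$ providing the inductive hypothesis needed to kill successive obstructions; this is essentially the Dwyer--Greenlees--Iyengar argument \cite{DGI} cited in the introduction. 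Alternatively, one phrases the construction as a uniqueness statement for minimal semifree resolutions of DG modules with one-dimensional homology over a connected augmented DG algebra, and deduces $M\simeq{}_Ak$ from the resulting isomorphism of minimal models.
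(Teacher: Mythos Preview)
Your ``if'' direction and the reduction to $A$ via the equivalence $\mathsf{L}^{\alpha}\dashv\mathsf{R}^{\alpha}$ are fine and match the paper. The gap is in the ``only if'' direction over $A$: you set up a comparison of semifree resolutions, correctly diagnose why the naive map $a\mapsto a\tilde\mu$ fails, and then stop, invoking either ``essentially the Dwyer--Greenlees--Iyengar argument'' or an unspecified theory of minimal semifree models. Neither invocation is a proof; you have not exhibited the quasi-isomorphism you need.

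The paper's argument is much more direct and avoids semifree resolutions entirely. Given $K$ in $\dcatdf{}{}{A}$ with $\HH(K)\cong k$ in degree $0$, one writes down an explicit DG $A$-submodule $K'\subseteq K$ so that $K/K'$ is concentrated in degree $0$ and $K\twoheadrightarrow K/K'$ is a quasi-isomorphism: in case (p) take $K'_i=K_i$ for $i\ge1$, $K'_0=\im(\dd^K_1)$, $K'_{<0}=0$; in case (n) take $K'_{>0}=0$, $K'_0$ a complement of $\Ker(\dd^K_0)$, $K'_i=K_i$ for $i\le-1$. The connectivity hypothesis on $\ov A$ guarantees that $K'$ is closed under the $A$-action. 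Now pick a cycle $z\in(K/K')_0$ generating $\HH_0(K/K')$ and send $1\mapsto z$; the resulting $A$-linear map $A\to K/K'$ kills $\ov A$ for degree reasons (since $\ov A$ lives in degrees $\ge1$, respectively $\le-2$, while the target sits in degree $0$), hence factors through a morphism $k\to K/K'$ which is a quasi-isomorphism. This replaces your missing inductive construction by a one-line truncation argument.
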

 
    \begin{proof}
By parts (1) and (2) of Lemma \ref{lem:exactFun} the functor $(B\ltensor_A\mathrm{?})\col \dcatdf{}{}{A}\to\dcatdf{}{}{B}$ 
is an exact equivalence and $\HH(\alpha\ltensor_AM)\col\HH(M)\to\HH(B\ltensor_AM)$ is a $k$-linear isomorphism.  
This shows that $B\ltensor\varepsilon^A$ is a quasi-augmentation, and that to finish the proof it suffices to show that
$\HH(K)\cong k$ as vector spaces implies $K\simeq k$ in $\dcatdf{}{}{A}$.

Define $K'\subseteq K$ as follows: $K'_{i}=K_i$ for $i\ge1$, $K'_{i}=0$ for $i\le-1$, and $K'_0=\im(\dd^K_1)$; 
respectively, $K'_{i}=0$ for $i\ge1$, $K'_{i}=K_i$ for $i\le-1$, and $K'_0$ is a subspace with 
$K'_0\oplus \Ker(\dd^K_0)=K_0$.  The hypotheses on $A$ imply that $K'$ is a DG submodule, and those 
on $K$ that $K\twoheadrightarrow K/K'$ is a quasi-isomorphism.  Let $z\in(K/K')_0$ be a cycle whose 
class generates $\HH_0(K/K')$ and $\kappa\col A\to K/K'$ be the morphism of DG $A$-modules 
with $\kappa(1)=z$.  Since $\kappa(\ov A)=0$ holds for degree reasons, we obtain a morphism 
$\kappa'\col k\to K/K'$ with $\HH(\kappa')\ne0$, so $\kappa'$ is a quasi-isomorphism. 
  \end{proof}

We need yet another avatar of the dual DG algebra of a DG coalgebra.

   \begin{bfchunk}{Homomorphisms of comodules.}
      \label{ch:dualComposition}
Let $C$ be a DG coalgebra,

The equality $C^*=\Xi_{Ck}$ from \ref{ch:dualActions} and the maps $\omega^{AC}$ from 
\eqref{eq:twistAdjXM} with $A=k=M$ yield an isomorphism of DG algebras
  \[
\omega^{kC}\col C^*\cong\End_C({}_CC)
  \quad\text{given by}\quad
\xi\mapsto(C\otimes\xi)\psi^C
  \]
and for every left DG $C$-comodule $X$ an equivariant isomorphism
  \[
\omega^{kC}\col X^*{\,\cong\,}\Hom_C(X,C)
  \quad\text{given by}\quad
\zeta\mapsto(X\otimes\zeta)\psi^{CX}
  \]
of left DG $C^*$-modules, with products on the right-hand sides given by composition.

Indeed, $\omega^{kC}$ is bijective by Lemma~\ref{lem:omega}.  For $\xi\in C^*$ and $\zeta\in X^*$ we get
\begin{align*}
\ub{\omega^{kC}}\ub{(\xi\scup\zeta)}
&=\ub{(C|(\xi\scup\zeta)}\psi^{CX}\\
&=\ub{(C|\xi|\zeta)}\ub{(C|\psi^{CX})\psi^{CX}}\\
&=(C|\xi)\ub{(C|C|\zeta)(\psi^{C}|X)}\psi^{CX}\\
&=\ub{(C|\xi)\psi^{C}}\ub{(C|\zeta)\psi^{CX}}\\
&=\omega^{kC}(\xi)\circ\omega^{kC}(\zeta)
  \end{align*}
    \end{bfchunk}

  \begin{lemma}
     \label{lem:dualCompositionA}
Let $A$ be an augmented DG algebra and $C$ a coaugmented DG  coalgebra such that 
$\ov A:=\Ker(\varepsilon^A)$ and $\ov C:=\Ker(\varepsilon^C)$ satisfy the conditions
  \[
\text{\rm(p)}\quad
\ov A_{\les0}=0=\ov C_{\les1} 
\qquad\text{respectively}\qquad
\text{\rm(n)}\quad
\ov A_{\ges-1}=0=\ov C_{\ges0}
  \]

For $F=A\rotimes{\tau}C$ the maps in \emph{\ref{ch:dualComposition}} define a morphism of DG algebras
  \begin{equation}
    \label{eq:dualCompositionA1}
\varkappa^{C}\col C^*\to \End_A(F)
  \quad\text{by}\quad
\varkappa^{C}(\xi)=A\otimes\omega^{kC}(\xi)
  \end{equation}
and for each left DG $A$-module a natural $\varkappa^{C}$-equivariant morphism of DG modules
  \begin{equation}
    \label{eq:dualCompositionA2}
\varkappa^{CM}\col(C\avlotimes{\tau}M)^*\to\Hom_A(F\avlotimes{\tau}M,F)
  \quad\text{by}\quad
\varkappa^{CM}(\zeta)=A\otimes\omega^{kC}(\zeta)
  \end{equation}
When $\tau$ is acyclic they are quasi-isomorphisms, and so yield an isomorphism $\HH(\varkappa^C)$ 
of graded algebras and an $\HH(\varkappa^C)$-equivariant isomorphism of graded modules:
  \begin{equation}
    \label{eq:dualCompositionA3}
\HH(C^*)\cong\Ext_A(k,k)
  \quad\text{and}\quad
\HH((C\avlotimes{\tau}M)^*)\cong\Ext_A(M,k)
  \end{equation}
     \end{lemma}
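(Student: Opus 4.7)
The plan is to realize $\varkappa^{C}$ and $\varkappa^{CM}$ as compositions in which the first factor is the isomorphism $\omega^{kC}$ of \ref{ch:dualComposition}, and the second is obtained by applying the functor $A\rotimes{\tau}(-)$ (naturality of twisted tensor products, \ref{ch:funTensor}) to $C$-comodule morphisms whose target is $C$. For $\varkappa^{C}$ this is $C^{*}\xra{\omega^{kC}}\End_{C}(C)\to\End_{A}(F)$, and for $\varkappa^{CM}$, after the associativity identification $F\avlotimes{\tau}M=A\rotimes{\tau}(C\avlotimes{\tau}M)$ of \eqref{eq:assoTwistsYAX}, it is $(C\avlotimes{\tau}M)^{*}\xra{\omega^{kC}}\Hom_{C}(C\avlotimes{\tau}M,C)\to\Hom_{A}(F\avlotimes{\tau}M,F)$. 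That $A\rotimes{\tau}(-)$ sends a $C$-colinear $\phi$ to the $A$-linear map $A\otimes\phi$ intertwining the twisted differentials uses only the compatibility of $\phi$ with the coaction, so $\varkappa^{C}$ becomes a morphism of DG algebras and $\varkappa^{CM}$ a $\varkappa^{C}$-equivariant morphism of DG modules.

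When $\tau$ is acyclic I would construct one-sided inverses to these maps. The key ingredient is a natural isomorphism of complexes $X^{*}\cong\Hom_{A}(A\rotimes{\tau}X,k)$ for every left DG $C$-comodule $X$, sending $f$ to $(a\otimes x\mapsto\varepsilon^{A}(a)f(x))$: compatibility with the differential of the twisted tensor product hinges on the vanishing $\varepsilon^{A}\tau=0$ from \ref{ch:notation4}, which kills the twist contributions $\varepsilon^{A}(a\tau(c_{i}))f(x_{i})$. Since $F$ and $F\avlotimes{\tau}M$ are semi-free over $A$ by Lemma \ref{lem:semifree}, and since $\varepsilon^{AC}\col F\to k$ is a quasi-isomorphism by Theorem \ref{thm:quismAcy}(ii) (using acyclicity of $\tau$), post-composition with $\varepsilon^{AC}$ yields quasi-isomorphisms
\[
\rho^{C}\col\End_{A}(F)\xra{\simeq}\Hom_{A}(F,k)\cong C^{*}
\]
and
\[
\rho^{CM}\col\Hom_{A}(F\avlotimes{\tau}M,F)\xra{\simeq}\Hom_{A}(F\avlotimes{\tau}M,k)\cong(C\avlotimes{\tau}M)^{*}.
\]
A direct computation using the counit axiom gives $\varepsilon^{C}(\omega^{kC}(\xi)(c))=\sum\varepsilon^{C}(c_{i})\xi(c'_{i})=\xi(c)$, which combined with $\varepsilon^{AC}=\varepsilon^{A}\otimes\varepsilon^{C}$ yields $\rho^{C}\varkappa^{C}=\id_{C^{*}}$, and the analogous identity $\rho^{CM}\varkappa^{CM}=\id$ holds. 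Being sections of quasi-isomorphisms, $\varkappa^{C}$ and $\varkappa^{CM}$ are themselves quasi-isomorphisms.

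The isomorphisms \eqref{eq:dualCompositionA3} follow on passing to homology: Lemma \ref{lem:semifree} and Theorem \ref{thm:quismAcy}(i) show that $F$ and $F\avlotimes{\tau}M$ are semi-free resolutions of $k$ and $M$, so $\HH(\End_{A}(F))=\Ext_{A}(k,k)$ and $\HH(\Hom_{A}(F\avlotimes{\tau}M,F))=\Ext_{A}(M,k)$. The main obstacle I expect is the sign-and-twist bookkeeping in the identification $\Hom_{A}(A\rotimes{\tau}X,k)\cong X^{*}$: one must verify that the $\dd^{A}$ contribution vanishes ($\varepsilon^{A}$ being a chain map into a complex with zero differential) and that the twist contribution vanishes ($\varepsilon^{A}\tau=0$), leaving just the dual differential on $X^{*}$ with the correct Koszul sign. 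Verifying the equivariance of $\varkappa^{CM}$ over $\varkappa^{C}$ requires similar care in tracking the induced left $C^{*}$-module structure on $(C\avlotimes{\tau}M)^{*}$, but is automatic once the factorization through $\omega^{kC}$ is in hand.
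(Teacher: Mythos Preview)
Your proposal is correct and follows essentially the same route as the paper. The paper packages the argument as a commutative square
\[
\xymatrix{
(C\avlotimes{\tau}M)^*\ar[r]^-{\varkappa^{CM}}&\Hom_A(F\avlotimes{\tau}M,F)\ar[d]^{\Hom_A(F\avlotimes{\tau}M,\varepsilon^{AC})}_{\simeq}\\
\Hom_A(A\otimes C\avlotimes{\tau}M,k)\ar[u]^{\cong}\ar@{=}[r]&\Hom_A(F\avlotimes{\tau}M,k)
}
\]
whereas you phrase the same content as ``$\varkappa^{CM}$ is a section of the quasi-isomorphism $\rho^{CM}$''; the bottom equality in the square is exactly your identification $X^*\cong\Hom_A(A\rotimes{\tau}X,k)$, and the paper justifies it via $\lambda^{\Xi\avop(A\otimes X)}(\tau)\subseteq\ov A\otimes X$, which is the same observation as your $\varepsilon^A\tau=0$.
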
 

   \begin{proof} 
The multiplicative properties of $\varkappa^{C}$ and $\varkappa^{CM}$ follow from those of $\omega^{kC}$,
see \ref{ch:dualComposition}, so we only need to show that when $\tau$ is acyclic $\varkappa^{CM}$ 
is a quasi-isomorphism.

Set $X=C\avlotimes{\tau}M$.  Formula \eqref{eq:ocap} gives $\lambda^{\Xi(A\otimes X)}(\tau)\subseteq\ov A\otimes X$,
where $\ov A=\Ker(\varepsilon^A)$.  It follows that  $\Hom_A(\lambda^{\Xi\avop(A\otimes X)}(\tau),k)$ is equal to zero.  
In view of \eqref{eq:twistXi1} the equality $F\avlotimes{\tau}M=A\rotimes{\tau}X$ yields the equality in the diagram
of complexes
   \[
\xymatrixcolsep{3pc}
\xymatrixrowsep{2pc} 
\xymatrix{
(C\avlotimes{\tau}M)^*
\ar@{->}[r]^-{\varkappa^{CM}}
\ar@{<-}[d]^{\cong}
&\Hom_A(F\avlotimes{\tau}M,F)
\ar@{->}[d]^-{\Hom_A(F\avlotimes{\tau}M,\varepsilon^{AC})}_-{\simeq}
\\
\Hom_{A}(A\otimes C\avlotimes{\tau}M,k)
\ar@{=}[r]
&\Hom_A(F\avlotimes{\tau}M,k)
}
  \]
The isomorphism is standard and the quasi-isomorphism is due to the semifreeness of $F$.
Since the diagram commutes we conclude that $\varkappa^{CM}$ is a quasi-isomorphism.  

Now \eqref{eq:dualCompositionA3} follows because $F\avlotimes{\tau}M\to M$ is a semifree resolution
by  \ref{ch:naturalRes}.
   \end{proof} 

\stepcounter{theorem}

  \begin{proof}[Proof of Theorem \emph{\ref{thm:bgg}}]
Choose, by \ref{ch:models}, a quasi-isomorphism  $\alpha\col A\to B$ of DG algebras with $A$ 
degreewise finite, augmented and satisfying $\ov{A}_{\les0}=0$, respectively, $\ov{A}_{\ges-1}=0$.
In view of Lemmas \ref{lem:simple} and \ref{ch:CompositionM} we may assume $K= B\ltensor_Ak$.  

Choose a degreewise finite coaugmented DG coalgebra $C$ with $\ov C_{\les1} =0$, respectively,
$\ov C_{\ges0}=0$ and an acyclic twisting map $\tau\col C\to A$; e.~g., $C=\babar A$, see~\ref{ch:Bar}. 

We proceed as follows to assemble the diagram of exact functors of triangulated categories, displayed
in \eqref{eq:diagram}:  Theorem \ref{thm:bggC} provides the equivalence $C^*\rotimes{\tau^*}\mathrm{?}^*$.  
The equivalences $\mathsf{F}^\mathsf{f}_{A}$ and $\subset$ come from \ref{lem:extensions}.  
Since $\mathsf{R}^{\varkappa}$ and $\mathsf{L}^{\alpha}$ are induced by the quasi-isomorphisms of DG algebras
$\varkappa$ (from Lemma \ref{lem:dualCompositionA}) and $\alpha$ (available by construction), they 
are equivalences by \ref{ch:morph}.  Lemma \ref{lem:exactFun} gives the equivalence $\mathsf{R}^{\mathsf{E}(\alpha)}$.

The two functors $\dcatdf{f}{a}{A}\opp\to\dcatdf{hf}{ha}{C^*}$ defined by the upper rectangle are isomorphic
due to the natural quasi-isomorphism \eqref{eq:dualCompositionA2}.  As $F\avlotimes{\tau}M\to M$ is a semi\-free 
resolution by \ref{ch:naturalRes}, the two functors $\dcatdf{hf}{ha}{A}\opp\to\dcatdf{hf}{ha}{\mathsf{E}(A)}$ defined by the 
triangle are  isomorphic by Lemma \ref{ch:CompositionM}. On the other hand, both functors 
$\dcatdf{hf}{ha}{A}\opp\to\dcatdf{hf}{ha}{\mathsf{E}(A)}$ defined by the lower rectangle are isomorphic by Lemma \ref{lem:exactFun}.
\begin{equation}
   \label{eq:diagram}
\xymatrixcolsep{5pc}
\xymatrixrowsep{3.5pc} 
\begin{gathered}
\xymatrix{
&\dcatdf{f}{a}{A}\opp
\ar@{->}[r]^-{C^*\rotimes{\tau^*}\mathrm{?}^*}_-{\equiv}
\ar@{->}[d]^-{\Hom_A(F\avlotimes{\tau}{\mathrm?},F)}
& \dcatdf{f}{a}{C^*}
\ar@{->}[d]^-{\bigcap}_-{\equiv}
  \\
\dcatdf{hf}{ha}{A}\opp
\ar@{->}[ur]^-{(\mathsf{F}^\mathsf{f}_A)\opp}_-{\equiv}
\ar@{->}[r]^-{\rhom_{A}(\mathrm{?},k)}
\ar@{->}[d]_-{\mathsf{L}^{\alpha}}^-{\equiv}
&\dcatdf{hf}{ha}{\mathsf{E}(A)}
\ar@{->}[r]^-{\mathsf{R}^{\varkappa}}_-{\equiv}
\ar@{<-}[d]^-{\mathsf{R}^{\mathsf{E}(\alpha)}}_-{\equiv}
&\dcatdf{hf}{ha}{C^*}
  \\
\dcatdf{hf}{ha}{B}\opp
\ar@{->}[r]^-{\rhom_{B}(\mathrm{?},K)}
&\dcatdf{hf}{ha}{\mathsf{E}(B)}
}
 \end{gathered}
 \end{equation}

Now from the upper rectangle we deduce that $\Hom_A(F\avlotimes{\tau}{\mathrm?},F)$ is an equivalence, then from the
triangle we see that $\rhom_{A}(\mathrm{?},k)$ is an equivalence, and finally from the lower rectangle we conclude that
$\rhom_{B}(\mathrm{?},K)$ is an equivalence, as desired.

In order to check that $\rhom_{B}(\mathrm{?},K)$ restricts to the equivalences in \eqref{eq:bgg2} it suffices to track
through \eqref{eq:bggC2}, \eqref{eq:morph2}, \eqref{eq:CompositionM}, and \eqref{eq:exactFun2} the equivalences 
obtained by restricting the other functors on the perimeter of the diagram.
  \end{proof}

  \begin{remark}
    \label{rem:right}
With the exception of $\rhom_{B}(\mathrm{?},K)$, each functor on the perimeter of diagram 
\eqref{eq:diagram} has been obtained as part of an adjoint equivalence.  Thus, a right adjoint to
$\rhom_{B}(\mathrm{?},K)$ can be produced as a composition of known functors.  
It can be thought of as the result of applying the theorem to the DG algebra $\mathsf{E}(B)$
and using the canonical quasi-isomorphism $\mathsf{E}(\mathsf{E}(B))\xra{\simeq}B$;
cf.~Corollary \ref{cor:bar-cobar}.
  \end{remark}

 \begin{Notes}
Unlike its covariant cousin $\rhom_B(K,\mathsf{?})\col\dcatdf{}{}{B}\to\dcatdf{}{}{\mathsf{E}(B)^{\mathsf o}}$,
see e.g.\ \cite[\S10]{Ke} or \cite[\S7]{ABIM}, the functor $\rhom_B(?,K)\col\dcatdf{}{}{B}\opp\to\dcatdf{}{}{\mathsf{E}(B)}$ 
does not appear to have been much studied.  We compare Theorem \ref{thm:bgg} to a couple of recent results.

When $\HH(B)$ is augmented and $\ov{\HH(B)}_{\ges0}=0$ Keller and Nicolas \cite[6.4]{KN} prove 
that it restricts to a fully faithful functor $\dcatdf{}{perf}{B}\opp\to\dcatdf{hf}{ha}{\mathsf{E}(B)}$.  If, in addition, 
$\HH_{-1}(B)=0$, then Theorem \ref{thm:bgg} provides much more complete information.  

In \cite[4.7]{HW1} He and Wu prove $\dcatdf{}{perf}{B}\opp\equiv\dcatdf{hf}{ha}{\mathsf{E}(B)}$ for augmented 
DG algebras with $\ov B_{\ges0}=0$, over $K=B/B_{\les-1}$ is isomorphic to a perfect DG module
generated in degree $0$; the only overlap with Theorem \ref{thm:bgg} is when $\HH(B)\cong k$.
  \end{Notes}

\section{Koszul algebras}
  \label{KoszulAlgebras}

We briefly review Koszul algebras from the point of view of twisting cochains.  

  \begin{notation}
    \label{ch:notation9}
Here $A$ is a degreewise finite augmented graded algebra satisfying
  \[
\mathrm{(p)}\quad \ov A_{\les0}=0
\qquad\text{respectively}\qquad
\mathrm{(n)}\quad \ov A_{\ges-1}=0
  \]
Let $\varepsilon^A\col A\to k$ be the canonical augmentation and define vector spaces as follows:
  \[
\text{$V\subseteq\ov A$, which maps isomorphically to }\ov A/{\ov A}{\hskip1pt}^2\,,
  \quad\text{and }\quad
W=\shift V
  \]
  \end{notation}

   \begin{bfchunk}{Two-homogeneity.}
    \label{ch:two-hom}
The inclusion $V\subseteq A$ defines a surjective homomorphism of graded 
algebras $\pi\col \ten aV\to A$.  It define the vertical maps in the following 
commutative diagram, where the horizontal ones are induced by the product of~$A$:
   \[
\xymatrixcolsep{2pc}
\xymatrixrowsep{2pc}
\xymatrix{
V^{\otimes 2}
\ar@{->>}[r]
\ar@{->}[d]_-{\cong}
&V^2
\ar@{->>}[d]
  \\
(\ov A/{\ov A}{\hskip1pt}^2)^{\otimes 2} 
\ar@{->>}[r]
& {\ov A}{\hskip1pt}^2/{\ov A}{\hskip1pt}^3
}
    \]

The algebra $A$ is said to be \emph{two-homogeneous} if the right-hand vertical map is bijective; 
that is, if ${\ov A}{\hskip1pt}^2=V^2\oplus{\ov A}{\hskip1pt}^3$.  The algebra $A$ is \emph{quadratic} if the two-sided ideal
$\Ker(\ten aV\to A)$ is generated by elements of $V^{\otimes 2}$.  One easily sees
that neither property depends on the choice of~$V$, and that quadratic algebras are two-homogeneous.  
   \end{bfchunk}
   
  \begin{bfchunk}{Quadratic dual.}
    \label{ch:quadratic}
Define a $k$-linear map 
  \begin{equation}
    \label{eq:quadratic}
\phi\col W^{\otimes 2}\to\shift^2 ({\ov A}{\hskip1pt}^2/{\ov A}{\hskip1pt}^3)
  \quad\text{by}\quad 
\susp{v}\otimes\susp{v'}\mapsto(-1)^{|v|}\susp^2(vv')
 \end{equation}
As $W$ is degreewise finite, for each $p$ we canonically identify $(W^{\otimes p})^*$ and $(W^*)^{\otimes p}$,
see \eqref{eq:cx2}.  Thus, we obtain an injective morphism of graded vector spaces
  \[
\phi^*\col (\shift^2 ({\ov A}{\hskip1pt}^2/{\ov A}{\hskip1pt}^3))^*\to(W^*)^{\otimes 2}
  \]

The \emph{quadratic dual} of $A$ is the graded algebra\footnote{This is the \emph{opposite} algebra of the 
quadratic dual in \cite[2.8.1]{BGSo}, which is constructed by using the canonical map 
$V^*\otimes U^*\to (U\otimes V)^*$, see \cite[2.7]{BGSo}, rather than the map $\varpi^{UV}$ from \ref{ch:cxDual}(3).}
  \[
A^!:=\ten a{W^*}/\ten a{W^*}\im(\phi^*)\ten a{W^*}
  \]
 \end{bfchunk}

  \begin{bfchunk}{Priddy construction.}
    \label{ch:barPri}
Borrowing notation from~\cite{LV} we write $A\oshriek$ for the graded coalgebra $(A^!)^*$.  It is a subcoalgebra of 
$(\ten a{W^*})^*=\ten c{W}$, which is itself a subcoalgebra of the bar construction $\babar A$.  Recall from \ref{ch:Bar}(2) that 
   \begin{equation}
     \label{eq:barPri1}
\dd^{\babar A}[a_1|\cdots| a_p]=
\sum_{i=1}^{p-1}(-1)^{|a_1|+\cdots+|a_i|+i-1}[{a}_1|\cdots|{a}_{i}a_{i+1}|\cdots| a_p]
  \end{equation}
  
Assume that $A$ is two-homogeneous.  Then $\dd^{\babar A}$ induces to a $k$-linear map
 \[
\ten c{W}\xra{\dd}\bigoplus_{2\les i\les p\les\infty}W^{\otimes(i-2)}\otimes \shift^2(V^2)\otimes W^{\otimes(p-i)}
  \]
Formulas \eqref{eq:barPri1} and \eqref{eq:quadratic} imply $\im(\dd^*)=
\ten a{W^*}\im(\phi^*)\ten a{W^*}$, and hence $\Coker(\dd^*)=A^!$.  We use this fact in two ways.  

On the one hand, since $\dd^{(\babar A)^*}(W^*)=0$ we obtain inclusions of graded algebras
  \begin{equation}
     \label{eq:barPri2}
A^!\cong k\langle W^*\rangle\subseteq\HH((\babar A)^*)=\Ext_A(k,k)
  \end{equation}
where $k\langle W^*\rangle$ denotes the $k$-subalgebra of generated by $W^*$.  

On the other hand, we get $A\oshriek=\Ker(\dd)$, so $A\oshriek$ with zero differential is a DG 
subcoalgebra of $\babar A$. Thus, restricting $\tau^A$ to $A\oshriek$ produces a twisting map
   \begin{equation}
     \label{eq:barPri3}
\tau^\mathsf{p}\col A\oshriek\to A
  \quad\text{given by}\quad
\tau^\mathsf{p}(w)=
  \begin{cases} 
  \susp^{-1}(w) &\text{for } w\in W \\
  0 &\text{otherwise} 
  \end{cases}  
     \end{equation}

We call the left DG $A$-module $A\rotimes{\tau^\mathsf{p}}A\oshriek$ the
\emph{Priddy construction} of $A$.  
  \end{bfchunk}

When $A$ satisfies the conditions in the next theorem it is said to be \emph{Koszul}.

  \begin{theorem}
    \label{thm:koszul}
Let $A$ be a degreewise finite, augmented graded algebra such that
 \[
\mathrm{(p)}\quad \ov A_{\les0}=0
\qquad\text{respectively}\qquad
\mathrm{(n)}\quad \ov A_{\ges-1}=0
  \]

When $A$ is two-homogeneous the following conditions are equivalent.  
  \begin{enumerate}[\quad\rm(i)]
   \item 
The DG algebra $\rhom_A(k,k)$ is formal and $\Ext_A(k,k)\cong A^!$.
   \item 
The graded algebra $\Ext_A(k,k)$ is generated by $W^*$.
    \item 
There exists an acyclic twisting map $A\oshriek\to A$.
   \item 
The twisting map $\tau^\mathsf{p}\col A\oshriek\to A$ from \eqref{eq:barPri3} is acyclic.
  \end{enumerate}
  \end{theorem}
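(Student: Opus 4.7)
The plan is to prove the cycle (i) $\Rightarrow$ (ii) $\Rightarrow$ (iv) $\Rightarrow$ (iii) $\Rightarrow$ (i). Two of the links are essentially free: (i) $\Rightarrow$ (ii) because $A^!$ is, by construction, generated as a graded algebra by $W^*$, so an isomorphism $\Ext_A(k,k)\cong A^!$ makes (ii) automatic; and (iv) $\Rightarrow$ (iii) is trivial.

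For (iii) $\Rightarrow$ (i) I would apply Lemma \ref{lem:dualCompositionA} to any acyclic twisting map $\sigma\col A\oshriek\to A$. That lemma produces a quasi-isomorphism of DG algebras from $(A\oshriek)^*$ to $\End_A(F)$ for $F=A\rotimes{\sigma}A\oshriek$. But $(A\oshriek)^*=A^!$ carries the zero differential, so $\End_A(F)$ is formal with homology $A^!$. Lemma \ref{ch:CompositionM} identifies $\End_A(F)$ with $\mathsf{E}(A)=\rhom_A(k,k)$ up to a zig-zag of quasi-isomorphisms of DG algebras, yielding both formality and $\Ext_A(k,k)\cong A^!$.

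The substantive implication is (ii) $\Rightarrow$ (iv). I would study the inclusion $\iota\col A\oshriek\hookrightarrow\babar A$ recalled in \ref{ch:barPri}, which is a morphism of DG coalgebras and satisfies $\tau^A\iota=\tau^{\mathsf{p}}$. The uniqueness in \ref{ch:Bar}(5) identifies $\iota$ with $\gamma^{\tau^{\mathsf{p}}}$, so by Theorem \ref{thm:quismAcy}(iii) the acyclicity of $\tau^{\mathsf{p}}$ is equivalent to $\iota$ being a quasi-isomorphism. Under (p) or (n), both $A\oshriek$ and $\babar A$ are degreewise finite, so this is in turn equivalent to the dual surjection $\iota^*\col(\babar A)^*\twoheadrightarrow A^!$ being a quasi-isomorphism. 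The crucial observation is that the induced graded algebra map $\HH(\iota^*)\col\Ext_A(k,k)\to A^!$ is a one-sided inverse of the inclusion $j\col A^!\hookrightarrow\Ext_A(k,k)$ from \eqref{eq:barPri2}: both are algebra homomorphisms, and because $\iota$ is the identity on the one-bar summand $W$, the composition $\HH(\iota^*)\circ j$ fixes $W^*$ and therefore fixes the entire generating set of $A^!$. Hypothesis (ii) then asserts that $j$ is surjective, so $j$ becomes a two-sided inverse of $\HH(\iota^*)$ and both are isomorphisms, completing the argument.

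The main obstacle is the careful bookkeeping around the inclusion $j$. One must verify, using \eqref{eq:barPri1} and the description $A\oshriek=\Ker(\dd)$ in \ref{ch:barPri}, that $W^*\subseteq(\babar A)^*$ consists of cocycles whose Ext classes are genuinely the images under $j$ of the generators of $A^!$, and that $\iota^*$ indeed restricts on $W^*$ to the canonical identification. Once this bridge between \eqref{eq:barPri2} and $\iota^*$ is firmly in place, the rest of the cycle is essentially formal.
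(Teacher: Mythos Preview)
Your proof is correct and follows the same cycle of implications as the paper, invoking the same lemmas (Lemma~\ref{lem:dualCompositionA} and Lemma~\ref{ch:CompositionM}) for (iii)$\Rightarrow$(i) and the same key identification $\iota=\gamma^{\tau^{\mathsf p}}$ together with Theorem~\ref{thm:quismAcy} for (ii)$\Rightarrow$(iv). The only difference is in how (ii)$\Rightarrow$(iv) is concluded: the paper observes that hypothesis~(ii) turns the inclusion \eqref{eq:barPri2} into an equality $A^!=\Ext_A(k,k)$, then deduces $\rank_kA\oshriek_i=\rank_k\HH_i(\babar A)$ by rank-counting and declares $\iota$ a quasi-isomorphism; your argument that $\HH(\iota^*)\circ j$ fixes the generating set $W^*$ of $A^!$, hence equals $\id_{A^!}$, is a more explicit justification of the same fact and actually fills in the step the paper leaves implicit (namely, why rank equality forces $\HH(\iota)$ to be bijective rather than merely of equal rank).
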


  \begin{proof} 
(i)$\implies$(ii).
This follows immediately from \eqref{eq:barPri2}.

(ii)$\implies$(iv).
From \eqref{eq:barPri2} we get the second equality in the string
  \[
\rank_kA\oshriek_i=\rank_kA_{-i}=\rank_k\HH_{-i}((\babar A)^*)=\rank_k\HH_i(\babar A)
  \]
Thus, $A\oshriek\hookrightarrow\babar A$ is a quasi-isomorphism,
so $\tau^{\mathsf{p}}$ is acyclic by Theorem~\ref{thm:quismAcy}.

(iii)$\implies$(i).  
Lemma \ref{lem:dualCompositionA} gives a quasi-isomorphism $A^!\simeq\End_A(A\rotimes{\tau}C)$.
Since $A\rotimes{\tau}C\simeq k$ is a semifree resolution by \ref{ch:naturalRes}, Lemma 
\ref{ch:CompositionM} gives the desired assertion.
 \end{proof}

The Priddy construction \ref{ch:barPri} acquires a particularly simple form when $A$ is finitely generated as
a $k$-algebra.

  \begin{bfchunk}{Koszul construction.}
    \label{ch:barKos}
Assume that $A$ is two-homogeneous and $V$ has a basis $\{v_1,\dots,v_q\}$.  Let $\{\xi_1,\dots,\xi_q\}$ 
be the of $W^*$ dual to $\{\susp{v_1},\dots,\susp{v_q}\}$ and set
  \begin{equation}
    \label{eq:barKos}
d=\sum_{i=1}^q v_i\otimes \xi_i\in V\otimes W^*\subseteq (A\otimes A^!)_{-1}
  \end{equation}

Let $\sigma\col A\otimes A^!\to\Xi^{\mathsf{o}}_{A^{\scriptstyle{\text{\rm<}}}A}$ denote the map 
$\sigma^{A\avop A^!\avop}$ from Proposition \ref{prop:morphisms}.  Comparison of \eqref{eq:barKos}
and \eqref{eq:barPri2} yields $\sigma(d)=\tau^{\mathsf{p}}$.  The latter is a twisting map, so we get
  \[
\sigma(d^2)=\tau^{\mathsf{p}}\scup\tau^{\mathsf{p}}=\dd^A\tau^{\mathsf{p}}+\tau^{\mathsf{p}}\dd^{A^{\scriptstyle{\text{\rm<}}}}=0
  \]
as both $A$ and $A^{\text{\rm<}}$ have zero differentials.  Since $\sigma$ is an injective morphism of
DG algebras, we conclude that $d^2=0$ holds.

The \emph{Koszul construction} of $A$ is the left DG $A$-module $\koszul A$ defined by
   \[
(\koszul A)\nat=A\otimes A\oshriek
  \quad\text{and}\quad
\dd^{\koszul A}(a\otimes\zeta)=(a\otimes \zeta)d 
  \]
It can be obtained by totaling the classical \emph{Koszul complex} of \cite{Pr}, \cite{Lo}.
  \end{bfchunk}

  \begin{Notes}
The concept of Koszul algebra has been so successful that it has become difficult
to compare the different flavors in use.  Priddy \cite{Pr} originally defined them as 
quadratic algebras satisfying condition (ii) in Theorem \ref{thm:koszul}. L\"ofwall 
\cite{Lo} relaxed the first condition to two-homogeneity.  In \cite{Pr} and \cite{Lo} 
one finds all the results in this section that do not refer to twisting maps; their use 
streamlines the arguments.
  \end{Notes}

\section{Golod DG algebras}
  \label{GolodDGAlgebras}

In this section $B$ stands for a DG algebra.  

We  determine the augmented DG algebras with free Ext algebras.
  
  \begin{bfchunk}{Trivial Massey operations.}
    \label{ch:massey}
A subset $\bsh$ of $\ov{\HH(B)}$ is said to admit a \emph{trivial Massey operation}
if there exists a map $\omicron\col\bigsqcup_{p=1}^\infty\bsh^p\to B$ satisfying the conditions
  \begin{align}
    \label{eq:massey1}
\dd^B(o(h))&=0\text{ and }\cls(\omicron(h))=h\quad \text{for all }h\in\bsh 
  \\
    \label{eq:massey2}
\dd^{B}\omicron(h_{1},\dots,h_{p})
&=\sum_{j=1}^{p-1}(-1)^{|h_{1}|+\dots+|h_{j}|+j}{\omicron}(h_{1},\dots,h_{j})\omicron(h_{{j+1}},\dots,h_{p})
  \\  \notag
&{\phantom{=0}}\text{for all }p\ge2 \text{ and all }(h_{1},\dots,h_{p})\in\bsh^p 
\intertext{Note that the preceding equalities imply that each $(h_{1},\dots,h_{p})\in\bsh^p$ satisfies}
    \label{eq:massey3}
|\omicron(h_{1},\dots,h_{p})|&=|h_{1}|+\dots+|h_{p}|+p-1\quad\text{ for every }p\ge1 
  \end{align}
  \end{bfchunk}

Trivial Massey operations are related to twisting maps as follows.

  \begin{lemma}
    \label{lem:massey}
Let $\bsh=\{h_i\}_{i\in I}$ be a subset of $\ov{\HH(B)}$, let $\omicron\col \bigsqcup_{p=1}^\infty\bsh^p\to B$ 
be a map satisfying \eqref{eq:massey3} and $W$ a graded vector space with basis $\{w_h : |w_h|=|h|+1\}_{h\in\bsh}$.

The $k$-linear homomorphism $\tau^{\omicron}\col\ten c{W}\to B$ defined by $\tau^{\omicron}(1)=0$ and
  \begin{equation}
    \label{eq:golod1}
\tau^{\omicron}({w_{1}}\otimes\dots\otimes {w_{p}})=\omicron(h_{1},\dots,h_{p})\quad\text{for}\quad p\ge1
  \end{equation}
is a twisting map if and only if $\omicron$ is a trivial Massey operation on $\bsh$.
  \end{lemma}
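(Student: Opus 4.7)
The plan is to unwind the twisting-map equation on the domain $\ten c W$ and check that, evaluated on each tensor power $W^{\otimes p}$, it is precisely the $p$-th trivial Massey identity for $\omicron$. Since the graded vector space $W$ carries no differential, the tensor coalgebra $\ten c W$ has $\dd^{\ten c W}=0$ and its coproduct $\psi^{\ten c W}$ is deconcatenation. Consequently the twisting condition $\dd^B\tau^\omicron+\tau^\omicron\dd^{\ten c W}=\varphi^B(\tau^\omicron\otimes\tau^\omicron)\psi^{\ten c W}$ collapses to the equality $\dd^B\tau^\omicron=\tau^\omicron\scup\tau^\omicron$ in the convolution algebra $\Xi_{\ten c W,B}$ of Claim \ref{ch:Xi}.

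First I would evaluate each side on a generic element $w_{h_1}\otimes\cdots\otimes w_{h_p}$. By the cup-product formula \eqref{eq:cup} applied to the deconcatenation coproduct, the two extremal summands ($i=0$ and $i=p$) vanish because $\tau^\omicron(1)=0$. What remains, after substituting $|w_h|=|h|+1$ into the Koszul sign $(-1)^{|\tau^\omicron||c_i|}$, matches the right-hand side of \eqref{eq:massey2} term by term, while the left-hand side is simply $\dd^B\omicron(h_1,\ldots,h_p)$.

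Next I would break into cases on $p$. For $p=0$ the equation reads $0=0$; for $p=1$ it becomes $\dd^B\omicron(h)=0$, the cycle condition in \eqref{eq:massey1}; for $p\ge 2$ it is exactly the recursion \eqref{eq:massey2}. Since the twisting equation is evaluated freely on a basis of $\ten c W$ and the system of identities \eqref{eq:massey1}--\eqref{eq:massey2} is read off that same basis, the biconditional follows. The remaining half of \eqref{eq:massey1}, namely $\cls(\omicron(h))=h$, is built into the setup through the phrase ``Massey operation on $\bsh$'' and is not derived from the twisting equation.

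The main obstacle, such as it is, is Koszul-sign bookkeeping: one must verify that $(-1)^{|\tau^\omicron||c_i|}=(-1)^{|w_{h_1}|+\cdots+|w_{h_i}|}$ combined with the shift $|w_h|=|h|+1$ produces exactly the sign $(-1)^{|h_1|+\cdots+|h_i|+i}$ appearing in \eqref{eq:massey2}. This is the single place where care is needed; no conceptual obstruction arises.
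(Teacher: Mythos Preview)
Your proposal is correct and follows essentially the same route as the paper: verify $|\tau^\omicron|=-1$, then evaluate $\dd^B\tau^\omicron$ and $\varphi^B(\tau^\omicron\otimes\tau^\omicron)\psi^{\ten cW}$ on the basis $\{1\}\cup\{w_{h_1}\otimes\cdots\otimes w_{h_p}\}$, using $\dd^{\ten cW}=0$ and $\tau^\omicron(1)=0$ to kill the extremal deconcatenation terms, so that the cases $p=0,1,\ge2$ match exactly the conditions \eqref{eq:massey1}--\eqref{eq:massey2}; the paper likewise treats one direction and says the other follows by reversing the computation.

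One remark on your final paragraph: your observation that the condition $\cls(\omicron(h))=h$ is not recoverable from the twisting equation is correct, but your explanation that it is ``built into the setup'' is not---the hypotheses of the lemma only impose the degree condition \eqref{eq:massey3} on $\omicron$. This is a minor imprecision in the lemma as stated (the ``only if'' direction yields all of the defining conditions for a trivial Massey operation \emph{except} $\cls(\omicron(h))=h$), and the paper's own proof shares it; in the only application, namely (v)$\Rightarrow$(iii) in Theorem~\ref{thm:golod}, just the ``if'' direction is used.
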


  \begin{proof} 
From $|{w_{1}}\otimes\dots\otimes {w_{p}}|=|h_{1}|+\dots+|h_{p}|+p$ and \eqref{eq:massey3} we get $|\tau^{\omicron}|=-1$.

We first assume that $\omicron$ is a trivial Massey operation and show that $\tau^{\omicron}$ is twisting.  As 
$\ten cW$ has zero differential, we have to check that 
$\dd^{B}\tau^{\omicron}$ and $\varphi^{B}(\tau^{\omicron}\otimes\tau^{\omicron})\psi^{\ten c{W}}$ 
agree on a basis of $\ten c{W}$.  Both return $0$ when evaluated at $1$, as $\psi^{\ten c{W}}(1)
=1\otimes1$ and $\tau^{\omicron}(1)=0$.  For each $h\in\bsh$ we obtain $\dd^{B}\tau^{\omicron}(h)=0$ from \eqref{eq:massey1}, 
and also
  \[
\varphi^{B}(\tau^{\omicron}\otimes\tau^{\omicron})\psi^{\ten c{W}}(h)=\varphi^{B}(\tau^{\omicron}\otimes\tau^{\omicron})(h\otimes1+1\otimes h)=0
  \]
by the expression for $\psi^{\ten c{W}}$ in \ref{ch:tenc}.  If $p\ge2$, then from there and
\eqref{eq:massey2} we get
  \begin{align*}
\dd^{B}\tau^{\omicron}({w_{1}}\otimes\dots\otimes {w_{p}})
&=\sum_{j=1}^{p-1}(-1)^{|h_{1}|+\dots+|h_{j}|+j}\omicron(h_{1},\dots,h_{j})\omicron(h_{{j+1}},\dots,h_{p})
  \\
&=\varphi^{B}(\tau^{\omicron}\otimes\tau^{\omicron})\psi^{\ten c{W}}({w_{1}}\otimes\dots\otimes {w_{p}}) 
  \end{align*}

The converse assertion is verified by reversing the preceding computations.
  \end{proof}

  \begin{bfchunk}{Trivial extensions.}
    \label{ch:trivial}
The \emph{trivial extension} of $k$ by a graded vector space $V$ is the augmented graded algebra $k\ltimes V$ with 
underlying vector space $k\oplus V$, product $(a,v)(a',v')=(aa',av'+a'v)$, unit $(1,0)$, and augmentation $(a,v)\mapsto a$.  

Formula \eqref{eq:barPri1} yields $(\babar{(k\ltimes V)})\nat=\ten c{\shift V}$ and $\dd^{\babar{(k\ltimes V)}}=0$.
  \end{bfchunk}

We say that $B$ is \emph{Golod} if it satisfies the conditions of the next theorem.

    \begin{theorem}
    \label{thm:golod}
Let $B$ be a degreewise finite, augmented DG algebra such that
  \[
\text{\rm(hp)}\quad
\ov{\HH(B)}_{\les0}=0 
\qquad\text{respectively}\qquad
\text{\rm(hn)}\quad 
\ov{\HH(B)}_{\ges-1}=0
  \]
and let $K$ be a left DG $B$-module with $\rank_k\HH(K)=1$; see Theorem \emph{\ref{thm:bgg}}.

The following conditions on $V=\ov{\HH(B)}$ and $W=\shift V$ are equivalent.
  \begin{enumerate}[\quad\rm(i)]
     \item
The graded algebra $\Ext_B(K,K)$ is isomorphic to $\ten a{W^*}$.
   \item
The graded algebra $\Ext_B(K,K)$ is free associative.
    \item 
There exists an acyclic twisting map $\tau\col\ten c{W}\to B$.
    \item 
The DG algebras $B$ is formal and $\HH(B)\cong k\ltimes V$.
    \item 
Some $k$-basis of\ \ $V$ admits a trivial Massey operation.
    \item 
Every $k$-basis of\ \ $V$ admits a trivial Massey operation.
  \end{enumerate}
   \end{theorem}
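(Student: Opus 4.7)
I first use \ref{ch:models} to replace $B$ with a quasi-isomorphic augmented DG algebra $A$ that is degreewise finite and satisfies $\ov A_{\les0}=0$ (resp.\ $\ov A_{\ges-1}=0$). By \ref{ch:morph} and Lemma \ref{lem:exactFun}, both $\HH(A)\cong\HH(B)$ and $\mathsf{E}(A)\simeq\mathsf{E}(B)$ transfer, so the six conditions are insensitive to this replacement. The trivial implications (vi)$\Rightarrow$(v) and (i)$\Rightarrow$(ii) are immediate. The plan is to close the remaining diagram as a cycle (v)$\Rightarrow$(iii)$\Rightarrow$(i), together with (iii)$\Leftrightarrow$(iv), (iv)$\Rightarrow$(vi), and the harder (ii)$\Rightarrow$(iv).

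For (v)$\Rightarrow$(iii), Lemma \ref{lem:massey} converts a trivial Massey operation $\omicron$ on a basis $\bsh$ of $V$ into a twisting map $\tau^\omicron\colon \ten{c}{W}\to A$; the length-two relation \eqref{eq:massey2} forces $\HH(A)\cong k\ltimes V$. I then verify acyclicity by filtering $A\rotimes{\tau^\omicron}\ten{c}{W}$ by the coradical filtration on $\ten{c}{W}$. Inspecting \eqref{eq:ocap} together with the deconcatenation coproduct shows that $E^1=\HH(A)\otimes \ten{c}{W}$ and that $d^1$ coincides with the twisting differential of the universal map $\tau^{k\ltimes V}\colon \ten{c}{W}=\babar{(k\ltimes V)}\to k\ltimes V$, which is acyclic by \ref{ch:Bar}(4); hence $E^2\cong k$. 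Conditions (p)/(n) ensure convergence in each total degree, and Theorem \ref{thm:quismAcy}(ii) then yields acyclicity of $\tau^\omicron$. The implication (iii)$\Rightarrow$(i) is a one-line consequence of Lemma \ref{lem:dualCompositionA}: since $\ten{c}{W}$ carries zero differential, $\Ext_A(k,k)\cong\HH((\ten{c}{W})^*)=\ten{a}{W^*}$.

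For (iii)$\Leftrightarrow$(iv) I pass through the bar-cobar adjunction. Given (iii), Theorem \ref{thm:quismAcy}(iii) furnishes a DG coalgebra quasi-isomorphism $\gamma^\tau\colon \ten{c}{W}\to\babar A$; both sides satisfy the dimensional hypotheses of \ref{ch:BarCobarNat}, so $\cobar{\gamma^\tau}$ is a DG algebra quasi-isomorphism. Chaining with Corollary \ref{cor:bar-cobar} applied to both $A$ and $k\ltimes V$ (using $\babar{(k\ltimes V)}=\ten{c}{W}$ from \ref{ch:trivial}) yields a zigzag $A\simeq\cobar{\babar A}\simeq\cobar{\ten{c}{W}}\simeq k\ltimes V$, proving (iv). The reverse (iv)$\Rightarrow$(iii) runs this argument backwards through $\babar$. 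From (iii) one deduces (vi) by a linear change of basis on $W$: once $\tau$ is adjusted so that $\tau(w_h)$ represents a prescribed class $h\in\bsh$, Lemma \ref{lem:massey} reads off a trivial Massey operation on $\bsh$.

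The main obstacle is (ii)$\Rightarrow$(iv). I attack it via the double-duality Remark \ref{rem:right} of Theorem \ref{thm:bgg}: $A\simeq\mathsf{E}(\mathsf{E}(A))$. Given (ii), $\HH(\mathsf{E}(A))=\Ext_A(k,k)$ is free associative, and the crux is that $\mathsf{E}(A)$ must then be formal, with $\mathsf{E}(A)\simeq\ten{a}{W^*}$ as DG algebras. Granted this intrinsic formality of free graded algebras, a direct computation with the length-one Koszul resolution of $k$ over $\ten{a}{W^*}$ identifies $\rhom_{\ten{a}{W^*}}(k,k)$ with $k\ltimes V$ (zero differential), so $A\simeq k\ltimes V$, which is (iv). This intrinsic formality is the delicate step; an alternative route is to produce a DG coalgebra quasi-iso $\ten{c}{W}\hookrightarrow\babar A$ by obstruction-theoretic lifting of cogenerators, with freeness of $\HH((\babar A)^*)$ ensuring that the higher obstructions vanish.
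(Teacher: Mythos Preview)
Your overall architecture is sound and several pieces are correct --- the spectral sequence argument for (v)$\Rightarrow$(iii) is a legitimate alternative to the paper's short exact sequence and degree induction, and (iii)$\Rightarrow$(iv) via $\cobar{\gamma^\tau}$ together with Corollary~\ref{cor:bar-cobar} is fine, as is (iii)$\Rightarrow$(i) from Lemma~\ref{lem:dualCompositionA}. But two links in your cycle are not justified, and without them the diagram does not close.

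First, ``(iv)$\Rightarrow$(iii) runs this argument backwards through $\babar{}$'' does not work as stated: applying $\babar{}$ to a \emph{zigzag} $A\simeq\cdots\simeq k\ltimes V$ yields a zigzag $\babar A\simeq\cdots\simeq\ten c W$, not a single morphism $\ten c W\to\babar A$. An acyclic twisting map into $A$ requires the latter (via Theorem~\ref{thm:quismAcy}(iii)), and there is no formal mechanism for inverting the wrong-way arrows in the category of DG coalgebras. Second, in ``(iii)$\Rightarrow$(vi)'' you assert that $\tau$ can be adjusted by a linear change of basis on $W$ so that $\tau(w_h)$ represents an arbitrary prescribed $h$; this presupposes that $w\mapsto\cls(\tau(w))$ is an isomorphism $W\cong V$, which you never verify. (It is true, but it needs an argument --- e.g.\ analyzing the edge map in your own spectral sequence, or reading it off the algebra isomorphism $\Ext_A(k,k)\cong\ten a{W^*}$.) With both links missing, condition~(iv) becomes a sink and the equivalence fails.

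The paper closes the cycle differently. For (ii)$\Rightarrow$(iv) it avoids the double-duality detour $A\simeq\mathsf E(\mathsf E(A))$ entirely: since $\HH((\babar A)^*)\cong\ten a U$ is free, one simply lifts a set of algebra generators to cycles, obtaining a DG algebra quasi-isomorphism $\ten a U\xra{\simeq}(\babar A)^*$; dualizing and applying $\cobar{}$ gives the zigzag to $k\ltimes V$. This \emph{is} the intrinsic formality of free graded algebras, carried out in one line rather than invoked as a black box. For the return trip the paper does \emph{not} attempt (iv)$\Rightarrow$(iii) or (iii)$\Rightarrow$(vi); instead it proves (iv)$\Rightarrow$(vi) via Lemma~\ref{lem:golod}, which transfers trivial Massey operations across an arbitrary zigzag of DG algebra quasi-isomorphisms (on $k\ltimes V$ the operation is the obvious one: $\omicron(h)=h$ and $\omicron=0$ on tuples of length $\ge2$). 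This lemma is exactly the missing ingredient in your argument, and invoking it would repair your proof with minimal change.
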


The hypotheses of the theorem are kept for the rest of the section.  

  \begin{lemma}
    \label{lem:golod}
Assume that $B$ is linked to $B'$ by a string of quasi-isomorphisms of DG algebras, let 
$\bsh$ be a subset of $\ov{\HH(B)}$, and $\bsh'$ the corresponding subset of $\ov{\HH(B')}$.

The set $\bsh$ admits a trivial Massey operation if and only if $\bsh'$ does.
  \end{lemma}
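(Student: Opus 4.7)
The approach is to use that trivial Massey operations correspond, by Lemma \ref{lem:massey}, to twisting maps of a specific form, and to transfer such maps along a single quasi-isomorphism. By the symmetry of the statement and induction on the length of the zig-zag linking $B$ and $B'$, it suffices to treat a single quasi-isomorphism $\alpha\col B\to B'$ of DG algebras, with $\bsh'=\HH(\alpha)(\bsh)$.

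The implication from $B$ to $B'$ is immediate: if $\omicron$ is a trivial Massey operation on $\bsh$ in $B$, then $\omicron':=\alpha\circ\omicron$ satisfies \eqref{eq:massey1} and \eqref{eq:massey2} in $B'$, since $\alpha$ is a morphism of DG algebras (hence commutes with products and differentials) and $\HH(\alpha)$ sends $\bsh$ to $\bsh'$.

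For the reverse implication, I would construct $\omicron$ on $\bsh$ in $B$ by induction on the tuple length $p$, simultaneously tracking ``correction'' elements $b'_{(h_1,\dots,h_q)}\in B'$ of appropriate degree with the property
\[
\alpha\omicron(h_1,\dots,h_q)\;-\;\omicron'(h'_1,\dots,h'_q)\;=\;\dd^{B'}\bigl(b'_{(h_1,\dots,h_q)}\bigr).
\]
For $p=1$, choose any cycle $\omicron(h)\in B$ representing $h$; since $\alpha\omicron(h)$ and $\omicron'(h')$ both represent $h'\in\HH(B')$, the required $b'_{h}$ exists. For $p\ge 2$, the right-hand side $y$ of \eqref{eq:massey2} is a cycle in $B$ by the standard Massey product computation using the inductive hypothesis. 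Expanding $\alpha(y)$ by substituting the correction identity in each factor, the leading term is the Massey sum for $\omicron'$, which equals $\dd^{B'}\omicron'(h'_1,\dots,h'_p)$, and the remaining mixed terms involving $\dd^{B'}b'_{(\dots)}$ reorganize via the Leibniz rule into an explicit boundary in $B'$. Hence $\alpha(y)$ is a boundary; by injectivity of $\HH(\alpha)$, so is $y$, and any $z\in B$ with $\dd^B(z)=y$ may be taken as $\omicron(h_1,\dots,h_p)$. A final use of surjectivity of $\HH(\alpha)$ on the resulting discrepancy produces $b'_{(h_1,\dots,h_p)}$.

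The main obstacle is the explicit reorganization of the mixed terms in $\alpha(y)$ into a boundary in $B'$, which requires careful sign bookkeeping and applications of the Leibniz rule combined with the inductive instance of \eqref{eq:massey2} at tuple lengths below $p$. A conceptually cleaner alternative would be to translate the whole question into morphisms of DG algebras via \ref{ch:Cobar}(5): the twisting map $\tau^{\omicron'}\col\ten c{W}\to B'$ corresponds to a DG algebra morphism $\cobar(\ten c{W})\to B'$, and since $\cobar(\ten c{W})$ has a free underlying graded algebra such maps can be lifted along the quasi-isomorphism $\alpha$; the inductive construction above is then precisely the explicit realization of such a lift, and the resulting twisting map $\tau\col\ten c{W}\to B$ gives, through Lemma \ref{lem:massey}, the desired trivial Massey operation on $\bsh$.
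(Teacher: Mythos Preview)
Your forward direction (pushing $\omicron$ along $\alpha$) matches the paper. The backward direction, however, is handled differently, and your version contains a genuine gap at precisely the point you flag as ``the main obstacle.''

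The paper does \emph{not} attempt to lift a trivial Massey operation along an arbitrary quasi-isomorphism. Instead it first invokes \cite[4.5]{FHT1} to rearrange the zig-zag so that every leftward arrow is \emph{surjective}; this reduces the lifting problem to the case of a surjective quasi-isomorphism $\beta\col B\to B'$. Surjectivity buys two things. First, $\ZZ(\beta)$ is surjective, so at the base step one can choose $\omicron(h)$ with $\beta\omicron(h)=\omicron'(h')$ \emph{on the nose}, and this exact equality $\beta\omicron=\omicron'$ is then maintained throughout the induction --- there are no correction terms $b'$ at all. Second, $\Ker(\beta)$ is acyclic, so once one knows that the Massey sum $z$ in $B$ satisfies $\beta(z)=\dd^{B'}\omicron'(h'_1,\dots,h'_p)$ exactly, a preimage $y$ of $\omicron'(h'_1,\dots,h'_p)$ gives $z-\dd^B(y)\in\ZZ(\Ker\beta)$, hence a boundary in $\Ker\beta$, and one sets $\omicron(h_1,\dots,h_p)=x+y$ with $\dd^B(x)=z-\dd^B(y)$. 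No Leibniz reorganization is needed.

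Your approach tries to keep only $\alpha\omicron\equiv\omicron'\pmod{\text{boundaries}}$. The problem is that when you expand $\alpha(y)$ and peel off the leading term $\dd^{B'}\omicron'(\dots)$, the cross terms such as $\omicron'(h'_1,\dots,h'_j)\cdot\dd^{B'}b'_{(h_{j+1},\dots,h_p)}$ are \emph{not} obviously boundaries, because $\omicron'(h'_1,\dots,h'_j)$ is not a cycle for $j\ge2$; applying the Leibniz rule introduces $\dd^{B'}\omicron'(h'_1,\dots,h'_j)$, which by \eqref{eq:massey2} is again a sum of products, and the bookkeeping proliferates. This can in principle be organized (it is the transfer of an $A_\infty$ structure along a quasi-isomorphism), but you have not carried it out, and your alternative via \ref{ch:Cobar}(5) has the same issue: cofibrancy of $\cobar(\ten cW)$ gives strict lifts only along \emph{surjective} quasi-isomorphisms --- which is exactly the reduction the paper performs.
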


  \begin{proof}
By applying \cite[4.5]{FHT1} we can find a string of quasi-isomorphisms
  \[
B=B^{(0)}\xla{\simeq} B^{(1)}\xra{\simeq}\cdots\xla{\simeq} B^{(2r-1)}\xra{\simeq} B^{(2r)}=B'
  \]
of DG algebras where all arrows pointing left are surjective and under which $\bsh'$ still 
corresponds to $\bsh$.  Thus, we may assume that $\beta\col B\to B'$ is a quasi-isomorphism 
of DG algebras and $\bsh'=\HH(\beta)(\bsh)$, and either $\omicron$ exists or $\omicron'$ 
exists and $\beta$ is surjective.

If $\omicron$ is a trivial Massey operation on $\bsh$, then $\omicron'(h'_{1},\dots,h'_{p})=
\beta\omicron(h_{1},\dots,h_{p})$ defines a trivial Massey operation $\omicron'$ on $\bsh'$.

If $\omicron'$ is a trivial Massey operation on $\bsh'$ and $\beta$ is surjective, then for each $h\in\bsh$ 
we may choose $\omicron(h)\in\ZZ(B)$ with $\beta\omicron(h)=\omicron'(\HH(\beta)(h))$ because
$\ZZ(\beta)\col \ZZ(B)\to\ZZ(B')$ is surjective.  This yields a map $\omicron\col\bsh\to B$ satisfying 
\eqref{eq:massey1} and $\beta\omicron=\omicron'$.  

Assume, by induction, that for some $p\ge2$ an 
extension $\omicron\col\bigsqcup_{j=1}^{p-1}\bsh^j\to B$ has been produced, so that \eqref{eq:massey2} 
holds and $\beta\omicron=\omicron'$.  For each $(h_{1},\dots,h_{j})\in\bsh^p$ set
  \[
z(h_{1},\dots,h_{j})
=\sum_{j=1}^{p-1}(-1)^{|h_{1}|+\dots+|h_{j}|+j}\omicron(h_{1},\dots,h_{j})\omicron(h_{1},\dots,h_{j})\,.
  \]
We then have $\dd^{B}(z(h_{1},\dots,h_{j}))=0$ and $\beta(z(h_{1},\dots,h_{j}))=\dd^B(\omicron'(h'_{1},\dots,h'_{p}))$.

Choose $y(h_{1},\dots,h_{j})$ in $B$ so that $\beta(y(h_{1},\dots,h_{j}))=\omicron'(h'_{1},\dots,h'_{j})$.  
Now the element $z(h_{1},\dots,h_{j})-\dd^{B}(y(h_{1},\dots,h_{j}))$ of $B$ is a cycle in $\Ker(\beta)$.  
As $\beta$ 
is a surjective quasi-isomorphism, we have $\HH(\Ker(\beta))=0$, so $z(h_{1},\dots,h_{j})-\dd^{B}(y(h_{1},\dots,h_{j}))$ 
is the boundary of some $x(h_{1},\dots,h_{p})$ in $\Ker(\beta)$.  The element
  \[
\omicron(h_{1},\dots,h_{p})\coloneqq x(h_{1},\dots,h_{p})+y(h_{1},\dots,h_{p})
  \]
satisfies $\dd^{B}(\omicron(h_{1},\dots,h_{p}))=z(h_{1},\dots,h_{p})$ and 
$\beta(\omicron(h_{1},\dots,h_{p}))=\omicron'(h'_{1},\dots,h'_{p})$.  
The inductive construction of a trivial Massey operation on $\bsh$ is now complete.
  \end{proof}

  \begin{proof}[Proof of Theorem \emph{\ref{thm:golod}}]
(ii)$\implies$(iv).  
Using \ref{ch:models}, choose a quasi-isomorphism of DG algebras $\alpha\col A\to B$
so that $A$ is degreewise finite, augmented, with $\ov{A}_{\les0}=0$, respectively, $\ov{A}_{\ges-1}=0$. 
The hypothesis means $\Ext_B(K,K)=\ten a{U}$ for some graded vector space,~$U$. 
By Lemmas \ref{lem:simple} and \ref{lem:exactFun}, this carries over when we replace 
$B$ with $A$ and $K$ with $k$.  The desired property evidently does not change under these substitutions, so
we may assume that $B$ is augmented, degreewise finite and has $\ov B_{\les0}=0$, respectively,
$\ov B_{\ges-1}=0$.  From $\Ext_B(K,K)\cong\HH((\babar B)^*)$ we see that~$U$ is degreewise 
finite with $U_{\ges-1}=0$, respectively, $U_{\les0}=0$.  We have a surjection
  \[
\ZZ((\babar B)^*)\twoheadrightarrow\HH((\babar B)^*)=\Ext_B(K,K)=\ten a{U}
  \]
 of graded algebras.  Picking a right inverse and composing it with $\ZZ((\babar B)^*)\subseteq(\babar B)^*$ we 
get a quasi-isomorphism of augmented DG algebras $\ten a{U}\xra{\simeq}(\babar B)^*$.
This yields a quasi-isomorphism $\ten c{U^*}\xla{\simeq}\babar{B}$ of DG coalgebras.  Since \ref{ch:trivial}
gives $\ten c{U^*}=\babar{(k\ltimes\shift^{-1}(U^*))}$ we get a string of quasi-isomorphism of DG algebras
  \[
B\xla{\simeq}\cobar{\babar{B}}\xra{\simeq}\cobar(\ten c{U^*})=\cobar{\babar{(k\ltimes\shift^{-1}(U^*)})}\xra{\simeq}k\ltimes\shift^{-1}(U^*)
  \]
where the first and third come from Corollary \ref{cor:bar-cobar} and the second from \ref{ch:BarCobarNat}.
The DG algebra on the right has zero differential, so we obtain $\shift^{-1}(U^*)\cong\ov{\HH(B)}=V$.

(iv)$\implies$(vi).
Let $\bsh$ be any basis of $\ov{\HH(B)}$, and $\bsh'$ the basis of the subspace $V$ of $k\ltimes V$ corresponding to
it through the given string of quasi-isomorphisms. Setting $\omicron(h')=h'$ and $\omicron(h'_1,\dots,h'_p)=0$ 
for $p\ge2$ we get a trivial Massey operation on~$\bsh'$.  By Lemma \ref{lem:golod}, there is a corresponding trivial 
Massey operation on $\bsh$.

(v)$\implies$(iii).
Let $\omicron$ be a trivial Massey operation on a basis $\bsh$ of $V$.  

We choose for $W=\shift V$ the basis $\{w_h=\susp(h)\}_{h\in\bsh}$ and proceed to show that
the twisting map $\tau=\tau^{\omicron}$ from \eqref{eq:golod1} is acyclic.  Indeed, 
\eqref{eq:twistXi2} and \eqref{eq:golod1} give
  \begin{equation}
   \begin{aligned}
    \label{eq:golod2}
\dd^{B\rotimes{\tau}\ten c{W}}(b\otimes {w_{1}}\otimes\dots\otimes {w_{p}})
=\dd^B(b)\otimes &\, {w_{1}}\otimes\dots\otimes {w_{p}}
    \\
+(-1)^{|b|}\sum_{j=1}^{p}b\,\omicron(h_{1},\dots,h_{j})\otimes &\, {w_{j+1}}\otimes\dots\otimes {w_{p}}
   \end{aligned}
     \end{equation}
Thus, $b\mapsto b\otimes1$ defines an exact sequence of complexes
  \[
0\to B\to B\rotimes{\tau}\ten c{W}\to(B\rotimes{\tau}\ten c{W})\otimes {W}\to0
  \]
From \eqref{eq:massey1} and \eqref{eq:golod1} we see that the connecting map in its 
homology sequence sends $\cls(1\otimes1\otimes{w_h})$ to $h$ for each $h$ in $\bsh$.  Thus,
setting $H_n=\HH_n(B\rotimes{\tau}\ten c{W})$ we get an isomorphism $H_0\cong k$, 
and for each integer $n$ we obtain an exact sequence 
  \[
0\to H_n\to \bigoplus_{i\in\BZ}(H_{n-i}\otimes W_i)\xra{\pi} \HH_{n-1}(\ov B)\to0
  \]
of vector spaces, where $\pi|_{H_0\otimes W_n}$ is an isomorphism.  If (p) holds, then
$H_n=0$ for $n\le-1$ and ${W}_{i}=0$ for $i\le1$, so the middle term equals 
$\bigoplus_{i=1}^{n}H_{n-i}\otimes W_i$.  For $n=1$ it is zero, hence $H_1=0$. 
When $n\ge2$ we may assume $H_i=0$ holds for $1\le i<n$.  The middle term 
then equals $H_0\otimes W_n$, which forces $H_n=0$.  If (n) holds, then 
$H_n=0$ for $n\ge1$ and ${W}_{i}=0$ for $i\ge0$, and a similar argument applies.

(iii)$\implies$(i).  
By \ref{eq:dualCompositionA3} we have $\Ext_B(K,K)=\HH(\ten a{W^*})=\ten a{W^*}$.
  \end{proof}

  \begin{Notes}
Golod~\cite{Go} proved that when $B$ is the Koszul complex of a commutative local ring $R$ the 
complex in \eqref{eq:golod2} is a minimal resolution of the residue field of~$R$.  A version of the 
theorem for graded-commutative DG algebras $B$ is proved in~\cite{Av}:  In (i) and (ii) the 
isomorphisms involving $\Ext_A(k,k)$ are maps of \emph{Hopf algebras} (the  tensor algebras 
being primitively generated), and in (iv) formality is a achieved through strings of 
quasi-isomorphisms of \emph{graded-commutative} DG algebras.  
  \end{Notes}

\appendix

\section{Complexes}

In the appendices, as in the rest of the paper, we work over a fixed field $k$.

The material on complexes is standard.  Some choices need to be made at an early
stage, especially when signs are involved; they are described explicitly with the intent
to be applied consistently throughout the text.  Some attention 
is given to recording conditions for bijectivity of a number of canonical morphisms; 
everything is completely elementary, but small variations occur and occasionally do matter.

  \begin{bfchunk}{Complexes.}
    \label{ch:cxCx}
A complex $V$ is a sequence of $k$-linear maps $\dd^V_i\col V_i\to V_{i-1}$ indexed by 
$i\in\BZ$ and satisfying $\dd^V_i\dd^V_{i+1}=0$ for each $i$.  We write $\dd^V$ for the sequence 
$(\dd^V_i)_{i\in\BZ}$ and refer to it as the differential of $V$.  The elements of $V_i$ are called elements 
of $V$ of degree $i$; our preferred notation is $v\in V$ with $|v|=i$.  

The \emph{shift} of $V$ is the complex $\shift V$ with $(\shift V)_i=V_{i-1}$ and 
$\dd^{\shift V}_i=-\dd^V_{i-1}$ for all~$i$.  For each element $v\in V_i$ 
we let $\susp v$ denote the element $v\in(\shift V)_{i+1}$.

We say that a complex $V$ is \emph{degreewise finite} if $\rank_kV_i$ is finite 
for each $i$, and that it is \emph{finite} if, in addition $V_i=0$ for $|i|\gg0$.

A graded vector space is a complex with zero differential.  We let $V\nat$ denote the graded vector
space underlying a complex $V$.
  \end{bfchunk}
  
For the rest of this appendix $U$, $V$, and $W$ denote complexes of vector spaces.  

  \begin{bfchunk}{Homomorphisms.}
    \label{ch:cxHom}
A homomorphism $\upsilon\col U\to V$ of degree $p$ is a family $\upsilon=(\upsilon_i)_{i\in\BZ}$
of $k$-linear maps $\upsilon_i\col U_i\to V_{i+p}$; that is, an element of
the space $\Hom(U,V)_p=\prod_{j-i=p}\Hom(U_i,V_j)$.  Composed in the obvious
way with a homomorphism $\omega\col V\to W$ of degree $q$, it yields a 
homomorphism $\omega\upsilon\col U\to W$ of degree $p+q$.  Thus, $\dd^V$ is a 
homomorphism $V\to V$ of degree $-1$, satisfying $(\dd^V)^2=0$.

The complex $\Hom(U,V)$ has $\Hom(U,V)_p$ as $p$th component and differential
  \begin{equation}
   \label{eq:cxHom}
\dd^{\Hom(U,V)}(\upsilon)=\dd^V\upsilon-(-1)^{|\upsilon|}\upsilon\dd^U 
  \end{equation}
The sign $(-1)^{|\upsilon|}$ is mandated by the rule that a coefficient  $(-1)^{|x||y|}$ 
appear in formulas whenever symbols $x$ and $y$ switch places: here $|\dd^V|=-1$.

A \emph{chain map} is a homomorphism $\upsilon\col U\to V$ that is a cycle in $\Hom(U,V)$; that is,
$\dd^V\upsilon=(-1)^{|\upsilon|}\upsilon\dd^U$ holds; $v\mapsto\susp v$ is a chain map 
$\susp \col V\to\shift V$ with $|\susp|=1$.

A \emph{morphism} of complexes is a chain map of degree~$0$. 

A \emph{quasi-isomorphism} is a morphism $\upsilon$, such that $\HH(\upsilon)$ is an isomorphism.
This property is indicated by the symbol $\simeq$ while $\cong$  is reserved for isomorphisms.
   \end{bfchunk}

  \begin{bfchunk}{Tensor products.}  
      \label{ch:cxTensor}
The complex $U\otimes V$ has $(U\otimes V)_n=\bigoplus_{i+j=n}U_i\otimes V_j$ and 
  \begin{equation}
    \label{eq:cxTen}
\dd^{U\otimes V}=\dd^U\otimes V+U\otimes\dd^V 
  \end{equation}
Evaluating both sides on $u\otimes v$ yields $\dd^{U\otimes V}(u\otimes v)=
\dd^U(u)\otimes v+(-1)^{|u|}u\otimes\dd^V(v)$.

When $\upsilon\col U\to V$ and $\upsilon'\col U'\to V'$ are homomorphisms of 
degree $p$ and $p'$, respectively, a natural homomorphism of complexes
  \begin{align}
    \label{eq:cx0}
\upsilon\otimes\upsilon'\col U\otimes U'\to V\otimes V'
    \end{align}
of degree $p+p'$ is defined by setting $(\upsilon\otimes\upsilon')(u\otimes u')
=(-1)^{|\upsilon'||u|}\upsilon(u)\otimes\upsilon'(u')$.  The definition implies the  
follows equalities:
   \begin{align}
    \label{eq:cx0.5}
\upsilon\otimes\upsilon'
=(\upsilon\otimes U')(U\otimes\upsilon')
=(-1)^{|\upsilon||\upsilon'|}(U\otimes\upsilon')(\upsilon\otimes U') 
    \end{align}
      \end{bfchunk}

  \begin{bfchunk}{Dual complexes.}
    \label{ch:cxDual}
By abuse of notation, $k$ stands also for the complex that has a unique non-zero component, which appears 
in degree $0$ and is equal to $k$.  

(1)  Set $U^*=\Hom(U,k)$; thus, $(U^*)_i=\Hom(U_{-i},k)$ and $\dd^{U^*}(\alpha)=(-1)^{|\alpha|-1}\alpha\dd^U$ 
for $\alpha\in U^*$. For each homomorphism $\upsilon\col U\to V$, a natural homomorphism 
$\upsilon^*\col V^*\to U^*$ with $|\upsilon^*|=|\upsilon|$ is defined by
$\upsilon^*(\beta)=(-1)^{|\upsilon||\beta|}\beta\upsilon$ for $\beta\in V^*$.

For every homomorphism $\omega\col V\to W$ the following equality holds:
  \[
(\omega\upsilon)^*=(-1)^{|\upsilon||\omega|}\upsilon^*\omega^* 
  \]

Applied with $\upsilon=\dd^V$ and $\omega=\beta\in\Hom(V,k)$, this formula yields
  \begin{equation}
    \label{eq:cx1.5}
(\dd^{V})^*=-\dd^{V^*} 
  \end{equation}

(2)  The formula $\delta^{UV}(\upsilon)=\upsilon^*$ gives a natural morphism
  \begin{align}
    \label{eq:cx3}
\delta^{UV}\col\Hom(U,V)&\to\Hom(V^*,U^*)
    \end{align}
 which is always injective; it is bijective if $V$ is degreewise finite.  

Indeed, using formulas \eqref{eq:cxHom}, \eqref{eq:cx1.5}, and \eqref{eq:cx3} one easily sees that 
$\delta^{UV}$ commutes with differentials.  Note that $\delta^{UV}_n$ is the natural $k$-linear map
  \begin{align*}
\prod_{j-i=n}\Hom(U_i,V_j)\to\prod_{i-j=n}\Hom((V_{-j})^*,(U_{-i})^*)
  \end{align*}
which is always injective, and is bijective when $\rank_kV_j$ is finite for each $j\in\BZ$.

(3)  Setting $\varpi^{UV}\!(\alpha\otimes\beta)(u\otimes v)=(-1)^{|\beta||u|}\alpha(u)\beta(v)$
one gets a natural morphism
  \begin{align}
    \label{eq:cx2}
\varpi^{UV}\col U^*\otimes V^*&\to(U\otimes V)^*
    \end{align}
that is always injective; it is bijective if $U$ is finite, or if $V$ is finite, or if one of 
$U$ and $V$ is degreewise finite and $U_{\ll0}=0=V_{\ll0}$ or $U_{\gg0}=0=V_{\gg0}$ holds.

Indeed, using \eqref{eq:cxTen}, \eqref{eq:cx1.5}, and \eqref{eq:cx2} one sees that $\varpi^{UV}$ commutes with 
differentials.  Note that $\varpi^{UV}_n$ is a composition of natural injective maps
  \begin{align*}
\bigoplus_{-i-j=n}(U^*\otimes V^*)_n\to\bigoplus_{i+j=-n}(U_i\otimes V_j)^*\subseteq\prod_{i+j=-n}(U_i\otimes V_j)^*
  \end{align*}
Under the additional hypotheses the arrow on the left is bijective, while the product has only
finitely many non-zero terms, so the inclusion is an equality.
     \end{bfchunk}

\section{DG modules}
  \label{app:DGA}

This appendix deals with standard and widely available material, so the main purpose 
is to fix terminology and notation.  As a last item we present a very simple abstract 
algebraic formalism underlying the construction of twisted tensor products.

  \begin{bfchunk}{DG algebras.}
    \label{ch:DGAaug}
A DG algebra $A$ is a complex endowed with morphisms $\varphi^A\col A\otimes A\to A$ 
(the \emph{product}) and $0\ne\eta^A\col k\to A$ (the \emph{unit}), satisfying
  \[
\varphi^A(\varphi^A\otimes A)=\varphi^A(A\otimes\varphi^A)
  \quad\text{and}\quad
\varphi^A(\eta^A\otimes A)=\id^A=\varphi^A(A\otimes\eta^A) 
  \]

As usual, we set $ab=\varphi^{A}(a\otimes b)$ and $1=\eta^A(1)$.
The \emph{opposite DG algebra} $A\avop$ has the same underlying complex and unit 
as $A$, and product $a\cdot b=(-1)^{|a||b|}ba$.  

A \emph{morphism} $\alpha\col A'\to A$ of DG algebras is a morphism of complexes, such that 
$\alpha\eta^{A'}=\eta^{A}$ and $\alpha\varphi^{A'}=\varphi^{A}(\alpha\otimes\alpha)$.  

Each complex $V$ defines a DG algebra $\End(V)$ with underlying complex $\Hom(V,V)$, product given by 
composition of morphisms, and unit $\id^V$.

A \emph{graded algebra} is a DG algebra with zero differential.

In case $A$ satisfies one of the conditions $A_{\ll0}=0$ or $A_{\gg0}=0$, a graded vector space 
$V$ is said to be \emph{adequate} for $A$ if $V_{\ll0}=0$, respectively, $V_{\gg0}=0$ holds.
  \end{bfchunk}

For the balance of this appendix $A$ denotes a DG algebra.

   \begin{bfchunk}{Augmentations.}
      \label{ch:AugA}
An \emph{augmented} DG algebra is a morphism $\varepsilon^A\col A\to k$ of DG algebras; 
with $\ov A=\Ker\varepsilon^A$ one has $A=k1\oplus\ov A$ canonically.  If $A_0=k$ and either 
$A_{\les-1}=0$ and $\dd^A_1=0$, or $A_{\ges1}=0$,  then $A$ has a \emph{unique} augmentation.

Let $\varepsilon^{A'}$ be an augmented DG algebra.  A morphism of \emph{augmented DG algebras} 
is a morphism $\alpha\col A'\to A$ of DG algebras such that $\varepsilon^{A}\alpha=\varepsilon^{A'}$.  

A \emph{derivation} is a $k$-linear map $\delta\col A\to A$ with 
$\delta\varphi^{A}=\varphi^{A}(\delta\otimes A+A\otimes\delta)$.
  \end{bfchunk}

  \begin{bfchunk}{Tensor algebras.}
    \label{ch:tena}
The \emph{augmented tensor algebra} $\ten aV$ of a graded vector space $V$ has underlying space 
$\bigoplus_{p=0}^\infty \tenn pV$ with $\tenn pV=V^{\otimes p}$, augmentation $1\mapsto1$ and 
$v_1\otimes\cdots\otimes v_p\mapsto0$ for $p\ge1$,  unit $1\mapsto1\in V^{\otimes 0}$, and product 
 \[
(v_1\otimes\cdots\otimes v_p)\otimes(v_{p+1}\otimes\cdots\otimes v_{p+q})\mapsto
v_1\otimes\cdots\otimes v_{p+q}
  \]

Let $\varepsilon^{A'}$ be any augmented graded algebra.

For every morphism $\beta\col V\to \ov A'$ of graded $k$-spaces there is a unique morphism 
$\wt\beta\col\ten aV\to A'$ of augmented graded algebras with $\wt\beta|_V=\beta$.

For every $k$-linear map $\delta\col V\to \ten cV$ of degree $d$ there is a unique derivation 
$\wt\delta$ of $\ten aV$ of degree $d$ with with $\wt\delta|_V=\delta$.
  \end{bfchunk}

  \begin{bfchunk}{Left DG modules.}
    \label{ch:DGAmod}
A \emph{left DG $A$-module} is a complex $M$ with a fixed morphism $\varphi^{AM}\col A\otimes M\to M$ 
(the \emph{action} of $A$ on $M$), which satisfies 
  \[
\varphi^{AM}(\eta^A\otimes M)=\id^M
  \quad\text{and}\quad
\varphi^{AM}(A\otimes\varphi^{AM})=\varphi^{AM}(\varphi^A\otimes M) 
  \]
Equivalently, there is a fixed morphism $\lambda^{AM}\col A\to\End(M)$ of DG algebras 
(the \emph{representation} of $A$ in $M$); see \ref{ch:DGAaug}.
The two structures are linked by the formula
  \begin{equation}
    \label{eq:lambda}
\lambda^{AM}(a)(m)=\varphi^{AM}(a\otimes m) 
  \end{equation}

When $V$ is a complex $\varphi^{A(A\otimes V)}:=\varphi^A\otimes V$ turns $A\otimes V$ into a left DG $A$-module.

When $M$ is a left DG $A$-module so is $\shift^sM$, with $a\susp^s(m)=(-1)^{|a|s}\susp^{s}(am)$.

A \emph{homomorphism} $\mu\col M\to M'$ of left DG $A$-modules is a homomorphism of 
complexes satisfying $\mu(am)=(-1)^{|\mu||a|}a\mu(m)$.  The set of all such homomorphisms 
is a subcomplex $\Hom_A(M,M')$ of  $\Hom(M,M')$.  Thus, the notions of chain maps and 
morphisms of complexes, defined as in \ref{ch:cxHom}, restrict to notions for DG modules.

When $\alpha\col A'\to A$ is a morphism of DG algebras each left DG $A'$-module $M'$ 
then has a natural structure of left DG $A$-module.  A map $\mu\col M\to M'$ or 
$\mu'\col M'\to M$ is an $\alpha$-\emph{equivariant morphism} of left DG modules if it is a 
morphism of left $A$-modules.

\emph{Graded modules} are DG modules with zero differentials.
  \end{bfchunk}

  \begin{bfchunk}{Right DG modules.}
    \label{ch:DGArightmod}
A \emph{right} DG $A$-module is a complex $N$ equipped with a morphism $\varphi^{NA}\col N\otimes A\to N$ 
subject to the evident restrictions.It has a canonical structure of left DG module over $A\avop$, see \ref{ch:DGAaug}, 
given by $a\cdot n=(-1)^{|n||a|}na$. In particular, \emph{homomorphisms} $\nu\col N\to N'$ of right DG modules satisfy $\nu(na)=\nu(n)a$. 
  
If $M$ and $N$ are left DG modules over $A$ and $A\avop$, respectively, then 
  \begin{equation}
    \label{eq:coinduced}
\lambda^{AN^*}(\xi)=(\lambda^{A\avop N}(\xi))^*
  \quad\text{and}\quad
\lambda^{A\avop M^*}(\xi)=(\lambda^{AM}(\xi))^*
  \end{equation}
define left representations of $A$ in $N^*$ and of $A\avop$ in $M^*$, respectively.
  \end{bfchunk}

\begin{bfchunk}{Semifree DG modules.}
    \label{ch:DGMhp}
A \emph{semifree filtration} of a DG $A$-module $F$ is a sequence $\cdots\subseteq F^{p-1}\subseteq F^p\subseteq\cdots$ 
of DG submodules, such that $F^p=0$ for $p\ll0$, $\bigcup_p F^p=F$, and for each $p\in\BZ$ there is a complex
$V^p$ with $\dd^{V^p}=0$ and an isomorphism of DG $A$-modules $F^p/F^{p-1}\cong A\otimes V^p$.
A DG $A$-module is \emph{semifree} if it admits some semifree filtration.  See \cite[\S1]{AH} or 
\cite[\S6]{FHT2} for the following properties.

When $F$ is a semifree DG module $F$ both $?\otimes_AF$ and $\Hom_A(F,?)$ preserve quasi-iso\-morphisms.  
Since degree zero cycles in $\Hom$ complexes are morphisms of DG modules, it follows 
that for each quasi-isomorphism $\varkappa^{C}\col L\simeq  K$ and every morphism $\phi\col F\to K$ 
there exists a morphism $\lambda\col F\to L$, such that $\phi$ is homotopic to $\varkappa^{C}\lambda$,
and that such a morphism is unique up to homotopy.  As a consequence, any quasi-isomorphism of semifree 
DG $A$-modules is a homotopy equivalence.
     \end{bfchunk}  

  \begin{bfchunk}{Left DG bimodules.}
    \label{ch:DGAbimod}
When $\Xi$ is a DG algebra, a left $A$-$\Xi$-bimodule is a complex $M'$ 
that is a left DG $A$-module and a left DG $\Xi$-module, and these structures are
compatible: $a(\xi m')=(-1)^{|a||\xi|}\xi(am')$ for $a\in A$, 
$m'\in M'$, and $\xi\in\Xi$. 
Morphisms of DG bimodules commute with all structures in place.

When $N$ is a right DG $A$-module and $M'$ a left DG $A$-$\Xi$-bimodule the formulas 
  \begin{equation}
    \label{eq:biinduced}
(\xi\vartheta)(m)=\xi\vartheta(m)
  \quad\text{and}\quad
\xi(n\otimes m')=(-1)^{|\xi||n|}n\otimes\xi m'
  \end{equation}
define structures of left DG $\Xi$-modules on $\Hom_A(M,M')$ and $N\otimes_AM'$, respectively.
  \end{bfchunk}

\begin{bfchunk}{Twisters.}
    \label{ch:twisters}
Let $\Xi$ be a DG algebra and $\tau$ an element of $\Xi$.

We say that $\tau$ is a \emph{twister} if it satisfies $\dd^{\Xi}(\tau)=\tau^2$; note that then $|\tau|=-1$.
  \begin{enumerate}[\rm(1)]
    \item
Clearly, $\tau$ is a twister if and only if $-\tau$ is a twister in $\Xi\avop$; see \ref{ch:DGAaug}.
    \item
Let $\tau$ be a twister.  For every left DG $\Xi$-module $U$ the map 
  \[
{}^{\tau}\dd=\dd^U-\lambda^{\Xi U}(\tau)\col U\to U
  \]
with $\lambda^{\Xi U}$ from \eqref{eq:lambda} defines a complex ${}^{\tau}U$ with $({}^{\tau}U)\nat=U\nat$,
as one has
\begin{align*}
({}^{\tau}\dd)^2(u)&=(\dd^U)^2(u)-\dd^U({\tau}u)-{\tau}\dd^U(u)+{\tau}^2u=({\tau}^2-\dd^{\Xi}({\tau}))u=0 
  \end{align*}
By (1), a left $\Xi\avop$-module $V$ yields a complex $V^{\tau}$ with $(V^{\tau})\nat=V\nat$ and differential
  \[
\dd^{\tau}=\dd^V+\lambda^{\Xi\avop V}(\tau)\col V\to V 
  \]
    \item
The map $\tau=0$ is a twister, which produces complexes ${}^{0}U=U$ and $V{}^{0}=V$.
    \item
If $M$ is a left DG $A$-$\Xi$-bimodule, then ${}^{\tau}M$ is a left DG $A$-module.  
     \end{enumerate}

Let $\delta\col \Xi\to\Xi'$ be a morphism of DG algebras and set $\tau'=\delta(\tau)$.

  \begin{enumerate}[\rm(1)]
    \item[\rm(5)]
If $\tau$ is a twister, then so is $\tau'$; the converse holds when $\delta$ is injective.
    \item[\rm(6)]
If $\mu\col M\to M'$  is an $A$-$\delta$-equivariant morphism of left DG bimodules, then  
${}^{\tau}\mu\col {}^{\tau}M\to{}^{\tau'}\!M'$ is a morphism of left DG $A$-modules, and 
${\mathrm{?}}\mapsto{}^{\tau}\mathrm{?}$ is a functor.
     \end{enumerate}
       \end{bfchunk}

\section{DG comodules}
  \label{app:DGC}

At a basic level notions concerning coalgebras and their comodules mirror 
the corresponding concepts for algebras and modules.  

  \begin{bfchunk}{DG coalgebras.}
    \label{ch:DGCaug}
A DG coalgebra $C$ is a complex equipped with morphisms $\psi^C\col C\to C\otimes C$ 
(the \emph{coproduct}) and $0\ne\varepsilon^C\col C\to k$ (the \emph{counit}), satisfying
  \[
(\psi^C\otimes C)\psi^C=(C\otimes \psi^C)\psi^C
  \quad\text{and}\quad
(\varepsilon^C\otimes C)\psi^C=\id^C=(C\otimes \varepsilon^C)\psi^C 
  \]

A \emph{morphism} $\gamma\col C\to C'$ of DG algebras is a morphism of complexes satisfying 
$\varepsilon^{C'}\gamma=\varepsilon^C$ and $\psi^{C'}\gamma=(\gamma\otimes\gamma)\psi^{C}$.

A \emph{graded coalgebra} is a DG coalgebra with zero differential.

In case $C$ satisfies one of the conditions $C_{\ll0}=0$ or $C_{\gg0}=0$, a graded vector space 
$V$ is said to be \emph{adequate} for $C$ if $V_{\ll0}=0$, respectively, $V_{\gg0}=0$ holds.
  \end{bfchunk}

For the rest of this appendix $C$ denotes a DG coalgebra.

   \begin{bfchunk}{Coaugmentations.}
      \label{ch:AugC}
A \emph{coaugmented} DG coalgebra is a morphism \text{$\eta^C\col k\to C$} of DG coalgebras.  Set $1=\eta^C(1)$ 
and $\ov C=\Ker(\varepsilon^C)$; thus, $C=k1\oplus\ov C$.  If $C_0=k$ and either $C_{\les-1}=0$ or $C_{\ges1}=0$ 
and $\dd^C_0=0$, then $C$ has a \emph{unique} coaugmentation.

The \emph{reduced coproduct} $\ov{\psi}{}^C\col\ov C\to
\ov C{}^{\otimes2}$ is the map $c\mapsto\psi^{C}(c)-1\otimes c-c\otimes1$.  Set $\ov\psi{}^{C(1)}=\ov\psi{}^{C}$ and 
$\ov\psi{}^{C(p)}:=(\ov{\psi}{}^{C(p-1)}\otimes \ov C)\ov{\psi}{}^C\col\ov C\to\ov C{}^{\otimes p}$ for $p\ge2$.
We say that $C$ is \emph{cocomplete} if it is coaugmented and $\bigcup_{p\ge1}\Ker(\ov\psi{}^{C(p)})=\ov C$.  
This holds, for example, if either $\ov C_{\les0}=0$ or $\ov C_{\ges0}=0$ holds.

Let $\eta^{C'}$ be a coaugmented DG coalgebra.  A morphism $\gamma\col C\to C'$ of DG coalgebras is one
of \emph{coaugmented} coalgebras if $\gamma\eta^{C}=\eta^{C'}$.  It yields a morphism  of complexes 
$\ov\gamma\col\ov C\to\ov C'$ satisfying $\ov\gamma{}^{\otimes p}\ov\psi{}^{C(p)}=\ov\psi{}^{C'(p)}\ov\gamma$
for all $p\ge1$.  

A \emph{coderivation} is a $k$-linear map $\delta\col C\to C$ with $\psi^{C}\delta|_{\ov C}=
(\delta\otimes C+C\otimes\delta)\ov\psi{}^C$ and $\delta(1)=0$.
  \end{bfchunk}

  \begin{bfchunk}{Tensor coalgebras.}
    \label{ch:tenc}
The \emph{tensor coalgebra} $\ten cV$ of a graded vector space $V$ has underlying graded $k$-space 
$\bigoplus_{p=0}^\infty \tenn pV$ with $\tenn pV=V^{\otimes p}$, coaugmentation $1\mapsto1$, counit 
$1\mapsto1$ and $v_1\otimes\cdots\otimes v_p\mapsto0$ for $p\ge1$, and reduced coproduct 
  \[
\ov\psi(v_1\otimes\cdots\otimes v_{p})=\sum_{j=1}^{p-1} (v_1\otimes\cdots\otimes v_j)\otimes(v_{j+1}\otimes\cdots\otimes v_{p})
  \]
The formula implies $\ov\psi{}^{(i)}(v_1\otimes\cdots\otimes v_{p})=0$ for $i>p$, so $\ten cV$ is cocomplete.

Let $\pi\col\ten cV\to V$ be the projection and $\eta^C$ be any \emph{cocomplete} coalgebra.
 
For every morphism $\beta\col \ov C\to V$ of graded $k$-spaces there is a unique morphism $\wt\beta\col C\to\ten cV$ 
of graded coaugmented coalgebras with $\pi\wt\beta|_{\ov C}=\beta|_{\ov C}$.

For every $k$-linear map $\delta\col\ten cV\to V$ of degree $d$ there is a unique coderivation $\wt\delta$ of $\ten cV$ 
of degree $d$ with $\pi{\wt\delta}|_{V}=\delta$.
  \end{bfchunk}

   \begin{bfchunk}{Left DG comodules.}
    \label{ch:DGAcomod}
A \emph{left DG $C$-module} is a complex $X$ with a fixed morphism $\psi^{CX}\col X\to C\otimes X$, the 
\emph{coaction} of $C$ on $X$, which satisfies 
  \[
(C\otimes\psi^{CX})\psi^{CX}=(\psi^{C}\otimes X)\psi^{CX}
  \quad\text{and}\quad
(\varepsilon^C\otimes X)=\id^X 
  \]

When $V$ is a complex $\psi^{C(C\otimes V)}:=\psi^C\otimes V$ turns $C\otimes V$ into a left DG $C$-comodule.

A \emph{homomorphism} $\chi\col X\to X'$ of left DG comodules is a homomorphism of 
complexes satisfying $(C\otimes\chi)\psi^{CX}=\psi^{CX'}\chi$.  The set of all such homomorphisms 
is a subcomplex $\Hom_C(X,X')$ of  $\Hom(X,X')$.  Thus, the notions of chain maps and 
morphisms of complexes, defined as in \ref{ch:cxHom}, restrict to notions for DG comodules.

When $X$ is a left DG $C$-comodule so is $\shift^sX$, with coproduct defined as follows:
if $\psi^{X}(x)=\sum_ic_i\otimes x_i$, then $\psi^{\shift^sX}(\susp^s(x))=\sum_i(-1)^{|c_i|s}c_i\otimes\susp^{s}(x_i)$.

\emph{Graded comodules} are DG comodules with zero differential.
  \end{bfchunk}

  \begin{bfchunk}{Right DG comodules.}
    \label{ch:DGCrightmod}
A \emph{right} DG $C$-comodule is a complex $Y$ with 
a morphism $\psi^{YC}\col Y\to Y\otimes C$, subject to the evident conditions.
Homomorphisms and related notions are defined by evident alterations of the 
definitions in \ref{ch:DGAcomod}.
  \end{bfchunk}

  \end{document}